\documentclass[12pt]{amsart}
\usepackage{amsmath}
\usepackage{amsthm}
\usepackage{amssymb}
\usepackage{amscd}
\usepackage{amsfonts}
\usepackage{amsbsy}
\usepackage{epsfig}
\usepackage{graphicx}
\usepackage{tikz}
\usepackage{tikz-cd}
\usepackage{appendix}

\textwidth 15.6cm
\textheight 22cm
\oddsidemargin 0cm
\evensidemargin 0cm
\topmargin 0cm

\newtheorem{theorem}{Theorem}[section]
\newtheorem{corollary}[theorem]{Corollary}
\newtheorem{proposition}[theorem]{Proposition}
\newtheorem{observation}[theorem]{Observation}
\newtheorem{lemma}[theorem]{Lemma}
\newtheorem{remark}[theorem]{Remark}
\newtheorem{question}[theorem]{Question}
\newtheorem{definition}[theorem]{Definition}
\newtheorem{example}[theorem]{Example}

\newtheorem{problem}{Problem}

\parindent=0cm
\parskip=3mm

\def\ie{{\em i.e.,} }
\def\eg{{\em e.g.} }

\newfont\bbf{msbm10 at 12pt}
\def\eps{\varepsilon}
\def\phi{\varphi}
\def\R{{\mathbb R}}

\def\N{{\mathbb N}}
\def\Z{{\mathbb Z}}

\def\E{{\mathcal E}}
\def\F{{\mathcal F}}

\def\orb{\mbox{\rm orb}}
\def\theta{\vartheta}

\def\eps{\varepsilon}

\def\Int{\mbox{\rm Int}}

\def\diam{\mbox{\rm diam}}
\def\Fix{\mbox{\rm Fix}}

\begin{document}

\title{Inhomogeneities in chainable continua}
\author{Ana Anu\v si\'c, Jernej \v Cin\v c}

\address[A.\ Anu\v{s}i\'c]{Departamento de Matem\'atica Aplicada, IME-USP, Rua de Mat\~ao 1010, Cidade Universit\'aria, 05508-090 S\~ao Paulo SP, Brazil}
\email{anaanusic@ime.usp.br}
\address[H.\ Bruin]{Faculty of Mathematics, University of Vienna, Oskar-Morgenstern-Platz 1, A-1090 Vienna, Austria}
\email{henk.bruin@univie.ac.at}
\address[J.\ \v{C}in\v{c}]{AGH University of Science and Technology, Faculty of Applied Mathematics, 
	al.\ Mickiewicza 30, 30-059 Krak\'ow, Poland. -- and -- National Supercomputing Centre IT4Innovations, 
	Division of the University of Ostrava, Institute for Research and Applications of Fuzzy Modeling, 
	30. dubna 22, 70103 Ostrava, Czech Republic}
\email{jernej@agh.edu.pl}
\thanks{We thank Henk Bruin for helpful remarks and Jan P. Boro\'nski for suggesting to consider circle like inverse limits. AA was supported by grant 2018/17585-5, S\~ao Paulo Research Foundation (FAPESP). J\v{C} was supported by the FWF Schr\"odinger Fellowship stand-alone project J4276-N35. J\v{C} was also partially supported by University of Ostrava grant IRP201824 “Complex topological structures”.}
\date{\today}

\subjclass[2010]{37B45, 37E05, 54H20}
\keywords{inverse limit space, endpoint, folding point, inhomogeneity, persistent recurrence, zigzag}

\maketitle

\begin{center}
	(with Appendix~A by Henk Bruin)
\end{center}

\begin{abstract}
	We study a class of chainable continua which contains, among others, all inverse limit spaces generated by a single interval bonding map which is piecewise monotone and locally eventually onto. Such spaces are realized as attractors of non-hyperbolic surface homeomorphisms. Using dynamical properties of the bonding map, we give conditions for existence of endpoints,  characterize the set of local inhomogeneities, and determine when it consists only of endpoints. As a side product we also obtain a characterization of arcs as inverse limits for piecewise monotone bonding maps, which is interesting in its own right.
\end{abstract}

\section{Introduction}\label{sec:intro}
Williams showed \cite{Wil1,Wil2} that expanding uniformly hyperbolic attractors can be represented as inverse limits on branched one-manifolds and the dynamics can be completely understood in the terms of the underlying one-dimensional map. Moreover, such attractors are locally homeomorphic to the product of a Cantor set and an (open) arc. Thus, in order to understand the properties of non-hyperbolic systems, we need to focus on those which have inhomogeneous global attractors. The simplest, yet topologically interesting such systems are
inverse limits $\underleftarrow{\lim}(I, f)$, with a continuous bonding map $f\colon I\to I$. It is well-known that every $\underleftarrow{\lim}(I, f)$ can be embedded in the plane as a global attractor of a planar homeomorphism, which is conjugate to the natural extension of $f$ on its global attractor, see \cite{BaMa}. Inverse limits of interval maps contain points which are locally not homeomorphic to the Cantor set of arcs, called {\em folding points}. The term folding point was introduced by Raines in \cite{Raines} and the 
name emphasizes the occurrence of ``folds" in arbitrary small neighbourhoods of such a point, see Figure~\ref{fig:uils}. Our goal is to understand the structure and properties of the set of folding points in terms of the dynamics of the bonding map $f$.

A {\em continuum} is a nonempty compact connected metric space. It is called {\em chainable} (or {\em arc-like}), if it admits an $\eps$-mapping onto
the interval $[0,1]$ for every $\eps>0$. Every chainable continuum can be represented as an inverse limit on intervals, possibly with varying bonding maps (see \eg \cite{InMah}).
In this paper we study topological properties of a wide class of chainable continua $\underleftarrow{\lim}\{I,f\}$, where $f\colon I\to I$ is a {\em long-zigzag} map. 

\begin{definition}\label{def:longzigzag}
	A map $f\colon I\to I$ is called {\em long-zigzag} 
	if there exists $\eps>0$ such that for every $n\in\N_0$ and every interval  $[a,b]\subset I$ such that $f^n([a, b])=[f^n(a), f^n(b)]$ (or $f^n([a,b])=[f^n(b), f^n(a)]$), $f^n(x)\not\in\{f^n(a),f^n(b)\}$ for every $x\in (a,b)$, and $|f^n(a)-f^n(b)|<\eps$, it holds that $f^n$ maps $[a,b]$ homeomorphically onto $f^n([a,b])$.
\end{definition}

Intuitively, long-zigzag maps are interval maps for which the lengths of possible {\em zigzags} (see Figure~\ref{fig:zigzag}) in all iterates are bounded from below.
Specifically, a map $f$ will be called {\em zigzag-free} if it has a property that whenever $f([a,b])=[f(a),f(b)]$ or $f([a,b])=[f(b),f(a)]$ and $f(x)\not\in \{f(a),f(b)\}$ for all $x\in(a,b)$, then $f|_{[a,b]}$ is a homeomorphism, see Definition~\ref{def:zigzagfree}. 
Every zigzag-free map, and this includes all unimodal maps (see Lemma 2.1. in \cite{ABC1}), is long-zigzag.
On the other hand the interval maps which produce the {\em pseudo-arc}  (\eg Henderson's map \cite{Henderson}) are not long-zigzag maps. 
 
 We show that the assumption of a map being long-zigzag is actually very mild - if interval map $f$ has finitely many critical points $\{c_1, \ldots, c_n\}$ such that if $f^k|_{[c_i,c_{i+1}]}$ is one-to-one for every $k\in\N$, then the length of branches $|f^k([c_i,c_{i+1}])|$ is bounded from below, then $f$ is long-zigzag. Specifically, every piecewise monotone $f$ which is locally eventually onto is long-zigzag, see Corollary~\ref{thm:leo+fpwl=lzzb}. We say that $f$ is \emph{locally eventually onto (leo)}, if every open interval in $I$ is eventually mapped onto the whole $I$. To assume leo is also not a severe restriction in our context since any interval map with finitely many critical points and without restrictive intervals, 
 periodic attractors or wandering intervals is conjugate to a piecewise linear leo map, or semi-conjugate otherwise 
 (see \cite{MT} or Theorem 8.1 from \cite{dMvS}). 
It is easy to see that there also exist long-zigzag interval maps with infinitely many critical points.
Moreover, the space of long-zigzag interval maps is dense in the $C^k$-topology for every $k\in \N$ in the space of all 
continuous interval maps. This follows since long-zigzag maps include long-branched interval maps and the later contain 
the set of hyperbolic maps. Hyperbolic maps are dense in the set of surjective interval maps in the $C^k$-topology for every $k\in\N$, 
see \cite{dMvS, KSvS} for details.
  
 Our study is motivated by results on the structure of local inhomogeneities of unimodal inverse limit spaces, see \eg \cite{BM,Raines,Br1,AABC}, and by the lack of basic tools to study inhomogeneities of indecomposable one-dimensional continua beyond the unimodal setting.
 
 An interval map $f$ is called {\em unimodal} 
 if there exists a unique critical point $c\in (0,1)$. Unimodal inverse limits present a simplified models for global attractors of H\'enon maps $H_{a,b}\colon \R^2\to\R^2$, $H_{a,b} = (1-ax^2+by, x)$ (sometimes they are indeed an actual model, see \cite{BaHo}). They have recently generated a significant research interest, both in continuum theory, culminating with the proof of the Ingram conjecture in \cite{BBS}, and in dynamical systems, by the works of Boyland, de Carvalho and Hall \cite{3G,3G1,3G3,3G2} (from both topological and measure theoretical perspective).
 When studying the local structure of unimodal inverse limits, it turns out that one of the key properties is that unimodal maps are long-zigzag. Using only analytic observations for interval maps (specifically avoiding symbolic arguments used in the unimodal setting), we prove that the following results on unimodal inverse limits hold in a much wider setting.

{\bf (1) The properties of endpoints.} If $f\colon I\to I$ is unimodal, then every endpoint in $\underleftarrow{\lim}\{I,f\}$ is an endpoint of every arc which contains it (Bruin \cite{Br1}).

A point $x$ in a chainable continuum $K$ is called an {\em endpoint}, if for every two subcontinua $x\in A, B\subset K$ it holds that $A\subset B$ or $B\subset A$. A point $x$ in a continuum $K$ is called an {\em $L$-endpoint} if $x$ is an endpoint of every arc in which it is contained (this definition was given by Lelek~\cite{Le} for the study of dendroids).

\begin{definition}
	Let $X=\underleftarrow{\lim}\{I, f_i\}$ and $x=(x_0, x_1, \ldots)\in X$. For $i\in\N_0$ we define the {\em $i$-basic arc} $A_i(x)$ as the maximal arc in $X$ such that $x\in A_i(x)$ and $\pi_i|_{A_i(x)}$ is one-to-one (can be degenerate).
\end{definition}

 Bruin~\cite{Br1} (see also Brucks and Diamond \cite{BrDi}) introduced this notion of a basic arc and then proves that in unimodal inverse limits a point $x$ is an endpoint of $K$ if and only if it is an endpoint of its $i$-basic arc for every $i\geq 0$ (here we call that a {\em $B$-endpoint}). 
 It follows that every endpoint in $\underleftarrow{\lim}\{I,f\}$ is an $L$-endpoint, if $f\colon I\to I$ is unimodal.\\
   The following theorem generalizes Bruin's result for the class of long-zigzag leo maps. 

{\bf Theorem 4.7.}
	If $f\colon I\to I$ is a long-zigzag leo map, then the sets of $B$-endpoints, $L$-endpoints and endpoints in $\underleftarrow{\lim}\{I,f\}$ are the same.  

When studying $L$-endpoints, a natural question which arises is how to characterize interval inverse limits $\underleftarrow{\lim}\{I,f_n\}$ which are homeomorphic to an arc. In Appendix~\ref{appB} we provide a characterization of an arc as an inverse limit with a single bonding map which has a finite set of critical points. That is a fundamental result, interesting also on its own. The characterization in a more general setting is still outstanding.

{\bf (2) Dynamical characterization of folding points.} Let $C$ denote the {\em set of critical points} of a 
continuous interval map $f$ including the points $0$ and $1$. 
The {\em omega limit set $\omega(C)$} of $C$ is the set of all limit points of sequences $(f^n(c))_{n\in\N}$, for all $c\in C$. For a large class of interval maps $f$, which contains unimodal maps, a folding point of $\underleftarrow{\lim}\{I,f\}$ can be characterized as a point for which all projections are in $\omega(C)$ (Raines \cite{Raines}). The class of maps from Raines' theorem in contained in a class of long-zigzag maps, see Corollary~\ref{cor:Raines}.

 We generalize Raines' characterization to long-zigzag interval maps $f$. For every $n\in\N$ 
 we denote the {\em $n$-th natural coordinate projection} of $\underleftarrow{\lim}\{I, f\}$ to $I$ by $\pi_n$.

{\bf Theorem 5.1.}
	Assume $f\colon I\to I$ is long-zigzag map. Then $x\in\underleftarrow{\lim}\{I, f\}$ is a folding point if and only if $\pi_n(x)\in\omega(C)$ for every $n\in\N_0$.

{\bf (3) Retractability and endpoints.}  If $f$ is unimodal, but not conjugate to the full tent map, then the set of endpoints in $\underleftarrow{\lim}\{I,f\}$ is equal to the set of folding points if and only if $c$ is persistently recurrent, see \cite{AABC}.

The notion of persistent recurrence was first introduced in \cite{BL} in connection with the existence 
of wild attractors of unimodal interval maps and was used in the construction of wild Cantor set attractors in \cite{BKNS}. 
We generalize the notion of persistent recurrence and characterize the class of long-zigzag inverse limits for which the set of folding points equals the set of endpoints. We use both Theorem~\ref{thm:llzzb} and Theorem~\ref{thm:folding} in the proof of the theorem below. 


\begin{definition}
    The map $f$ is {\em non-retractable along $\omega(C)$} if for every
	backward orbit $x=(x_0, x_1, x_2, \ldots)$ in $\omega(C)$ and every open interval $U\subset I$ such that $x_0\in U$, the {\em pull-back} of $U$ along $x$ is not monotone.
\end{definition}

Definition~\ref{def:pullback} explains precisely what we mean by a  monotone pull-back along $x$.
Note that the definition does not require recurrence of critical points. For example, the critical set $C=\{0,1/2,1\}$ of the full tent map $T_2(x)=\min\{2x,2(1-x)\}$, $x\in I$, is non-retractable, yet $c=1/2$ is not recurrent. Due to the lack of recurrence in this more general setting, we avoid the use of the term ''persistent recurrence''.

{\bf Theorem 6.3.}
	Let $f\colon I\to I$ be a long-zigzag leo map with the critical set $C$. Then for $X=\underleftarrow{\lim}\{I, f\}$ it holds that every folding point is an endpoint if and only if $f$ is non-retractable along $\omega(C)$.
	
Since it is in general hard to see from the bonding map if the map is non-retractable or not, we provide in Appendix~\ref{appA} jointly with Henk Bruin a characterization of interval maps that are retractable through their dynamical properties.
	
{\bf (4) Effect of critical recurrence on the existence of endpoints.} If $f\colon I\to I$ is unimodal, then $\underleftarrow{\lim}\{I,f\}$ contains an endpoint if and only if $c$ is recurrent, \ie $c\in\omega(c)$. Moreover, if the orbit of $c$ is infinite, then $\underleftarrow{\lim}\{I,f\}$ has uncountably many endpoints, see \cite{BM} and \cite{Br1}. 

We show that the similar result holds true for long-zigzag leo maps $f$, assuming that at least one critical point recurs on itself (\ie $c\in\omega(c)$).
We emphasize that we avoid using the technical characterization of endpoints from Theorem 1.4 of \cite{BM}, which makes our proof more accessible, even in the unimodal setting.

{\bf Theorem 7.3.}
If $f$ is long-zigzag leo map, and there exists $c\in C$ such that $c\in\omega(c)$, then there exists an endpoint in $\underleftarrow{\lim}\{I,f\}$. If additionally $\orb(c)$ is infinite, then $\underleftarrow{\lim}\{I,f\}$ has uncountably many endpoints.	

We note that the mentioned results present foundations for more general theory in the setting of graph inverse limits (and thus $1$-dimensional continua in general), once the definitions given in this paper are adjusted accordingly.
For example, the notion of a long-zigzag circle map $f\colon S^1\to S^1$ needs to be defined on a lift $F\colon\R\to\R$, and then the arguments in the proofs of four theorems stated above follow analogously. Therefore, the above four theorems hold in the setting of inverse limits with circle bonding maps as well. When underlying graphs have branching points, the definition of a folding point has to be given more precisely in order to give an applicable theory. 

Let us briefly outline the structure of the paper. After fixing notation and giving preliminaries we discuss zigzags in interval maps and show that every leo map with finitely many 
critical points is long-zigzag in Section~\ref{sec:zigzags}. In Section~\ref{sec:endpoints} we discuss different notions of endpoints, 
and finish our discussion with the proofs of propositions which imply Theorem~\ref{thm:llzzb}. 
In Sections~\ref{sec:fp},~\ref{sec:persistence} and ~\ref{sec:recurrence}, we prove Theorems~\ref{thm:folding}, ~\ref{thm:FisE} and ~\ref{thm:endpt} respectively. Finally, Appendix~\ref{appA} gives a characterization of retractability along $\omega(C)$ through properties of interval maps and Appendix~\ref{appB} provides a characterization of arc when the bonding map has finitely many critical points. Whenever possible, we include examples which show that the standing assumptions are indeed optimal. 

\section{Preliminaries and notation}\label{sec:prel}

The set of {\em natural numbers} is denoted by $\N$, and $\N_0=\N\cup\{0\}$.
The {\em interior} and the {\em closure} of a set $S$ will be denoted by $\Int(S)$ and $\overline{S}$, 
respectively. By $I$ we denote the {\em unit interval} $[0,1]$. A {\em map} $f\colon I\to I$ always 
means a continuous surjective function, and $f^n=f\circ\ldots\circ f$ is the $n$-th iterate of $f$. 
A point $c\in (0,1)$ is called a {\em critical point} of $f\colon I\to I$ if for every open $J\ni c$, 
$f|_J$ is not one-to-one. 
The set $C$ of all critical points of $f$ including $0$ and $1$ is called the {\em critical set}. 
An interval map is called {\em piecewise monotone}, if it has finitely many critical points. Given $f\colon I\to I$, an {\em orbit of a point} $x\in I$ is the set $\orb(x)=\{f^n(x): n\geq 0\}$, and an {\em orbit of a set} $S\subset I$ is $\orb(S)=\{\orb(x): x\in S\}$. The {\em omega-limit set} of a set $S\subset I$, denoted by $\omega(S)$, is the set of all limit points of $\orb(S)$. 


A {\em continuum} is a non-empty, compact, connected, metric space. In the definition of inverse limit space we can assume without loss of generality that the bonding functions are surjective.
Given a sequence of maps 
$f_i\colon I\to I$, $i\in\N$, the inverse limit is given by:
$$\underleftarrow{\lim}\{I, f_i\}=\{(x_0, x_1, \ldots): f_i(x_i)=x_{i-1}, i\in\N\}\subset I^{\N_0}.$$
Equipped with the {\em product topology}, $\underleftarrow{\lim}\{I, f_i\}$ is a continuum, and it is {\em chainable}, 
\ie for every $\eps>0$ there exists an $\eps$-mapping onto an interval. 
Moreover, every chainable continuum is an inverse limit on intervals, see \cite{InMah}. The {\em coordinate projections} are defined 
by $\pi_i\colon \underleftarrow{\lim}\{I, f_i\}\to I$, $\pi_i((x_0, x_1, \ldots))=x_i$, for $i\in\N_0$ and they are all continuous. 
Also, for $i\in\N_0$, an {\em $i$-box} is the set of the form $\pi_i^{-1}(U)$, where $U\subset I$ is an open set. 
The set of all $i$-boxes, $i\geq 0$, forms a basis for the topology of $\underleftarrow{\lim}\{I, f_i\}$. 

If $f_i=f$ for all $i\in\N$, we denote the inverse limit by $X=\underleftarrow{\lim}\{I, f\}$. In that case there exists a 
homeomorphism $\hat f\colon X\to X$, given by $$\hat f((x_0, x_1, \ldots)):=(f(x_0), f(x_1), f(x_2), \ldots)=(f(x_0), x_0, x_1, \ldots).$$
 It is called the {\em natural extension} of $f$ (or sometimes {\em shift homeomorphism}). If $f$ is {\em unimodal} 
 then $X$ is called a {\em unimodal inverse limit space}.
 
\section{Zigzags in interval maps}\label{sec:zigzags}

In this section we define the basic notions that we need in the rest of the paper and prove in Corollary~\ref{thm:leo+fpwl=lzzb} that every piecewise monotone leo map is long-zigzag.

\begin{definition}\label{def:zigzagfree}
	A map $f\colon I\to I$ is called \em{zigzag-free} if for every interval $[a, b]\subset I$  such that $f([a,b]))=[f(a), f(b)]$ (or $f([a,b])=[f(b),f(a)]$), and $f(x)\not\in\{f(a),f(b)\}$ for every $x\in (a,b)$, it holds that $[a,b]$ is mapped homeomorphically onto $f([a,b])$ (see examples of maps which are not zigzag-free in Figure~\ref{fig:zigzag}).
\end{definition}

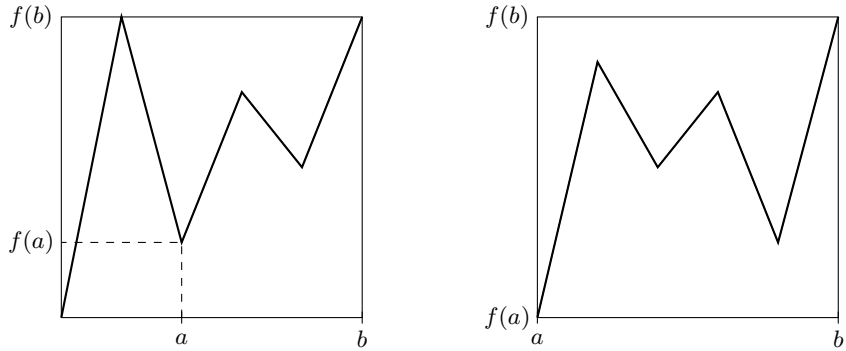
\begin{figure}[!ht]
	\centering
	\begin{tikzpicture}[scale=4]
	\draw (0,0)--(1,0)--(1,1)--(0,1)--(0,0);
	\draw [thick] (0,0)--(1/5,1)--(2/5,1/4)--(3/5,3/4)--(4/5,1/2)--(1,1);
	\draw (2/5,-0.02)--(2/5,0.02);
	\node at (2/5, -0.07) {\scriptsize $a$};
	\node at (-0.1, 1/4) {\scriptsize $f(a)$};
	\node at (-0.1, 1) {\scriptsize $f(b)$};
	\draw[dashed] (2/5,0)--(2/5,1/4)--(0,1/4);
	\draw (1,-0.02)--(1,0.02);
	\node at (1, -0.07) {\scriptsize $b$};
	\draw (2/5,1/4)--(3/5,3/4)--(4/5,1/2)--(1,1);
	\end{tikzpicture}
	\hspace{1cm}
	\begin{tikzpicture}[scale=4]
	\draw (0,0)--(1,0)--(1,1)--(0,1)--(0,0);
	\draw[thick] (0,0)--(0.2,0.85)--(0.4,0.5)--(0.6,0.75)--(0.8,0.25)--(1,1);
	\draw (0,-0.02)--(0,0.02);
	\node at (0, -0.07) {\scriptsize $a$};
	\node at (-0.1,0) {\scriptsize $f(a)$};
	\node at (-0.1, 1) {\scriptsize $f(b)$};
	\draw (1,-0.02)--(1,0.02);
	\node at (1, -0.07) {\scriptsize $b$};
	\draw (0,0)--(0.2,0.85)--(0.4,0.5)--(0.6,0.75)--(0.8,0.25)--(1,1);
	\end{tikzpicture}
	\caption{Maps $f$ which are not zigzag-free.}
	\label{fig:zigzag}
\end{figure}


Note that there is no smoothness condition regarding the function $f$ in the definition above, however for simplicity we will draw all pictures piecewise linear.
In the next two results we need the following observation.

\begin{observation}\label{obs:compzz}
	For maps $f,g\colon I\to I$ write $F=f\circ g$ and assume $a<b\in I$ are such that $F([a,b])\in\{[F(a),F(b)], [F(b),F(a)]\}$ and $F(x)\not\in\{F(a),F(b)\}$ for all $x\in(a,b)$. Then also $g([a,b])\in\{[g(a), g(b)], [g(b), g(a)]\}$ and $g(x)\not\in\{g(a),g(b)\}$ for all $x\in(a,b)$. The same holds for $f$, \ie $f([g(a),g(b)])\in\{[f\circ g(a), f\circ g(b)]\}$, and $f(x)\not\in\{f\circ g(a), f\circ g(b)\}$ for all $x\in (g(a),g(b))$. Thus $f|_{[g(a),g(b)]}$ is either monotone, or $f$ has at least two critical points in $(g(a),g(b))$, in which case $|F(a)-F(b)|\geq\min\{|f(x)-f(y)|: x\neq y \text{ neighbouring critical points of $f$} \}$. 
\end{observation}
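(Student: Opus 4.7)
The plan is to propagate the ``no new boundary hits'' condition from $F = f \circ g$ down to its factors $g$ and $f$ via elementary intermediate value arguments, and then to extract the critical-point information from the fact that the image of $f$ over the intermediate interval is exactly the convex hull of its endpoint values. First I would note that $F(a) \neq F(b)$, since otherwise $F$ would be constant on $[a,b]$ and immediately violate the hypothesis; in particular $g(a) \neq g(b)$, so WLOG $g(a) < g(b)$. The claim $g(x) \notin \{g(a), g(b)\}$ for $x \in (a,b)$ is then immediate: $g(x) = g(a)$ would force $F(x) = f(g(a)) = F(a)$, and similarly for $g(b)$.

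The technical heart, and what I expect to be the main obstacle (though still a routine IVT argument), is showing $g([a,b]) = [g(a), g(b)]$, i.e., that $g$ does not overshoot. Suppose for contradiction that some $x_0 \in (a,b)$ satisfies $g(x_0) > g(b)$. Continuity of $g$ on $[x_0, b]$ guarantees that $\{x \in [x_0, b] : g(x) = g(b)\}$ is a nonempty closed subset; let $z$ be its infimum. If $z \in (x_0, b)$, then $z \in (a,b)$ and $F(z) = f(g(b)) = F(b)$, a contradiction. The delicate sub-case is $z = b$: then $g > g(b)$ on the half-open interval $[x_0, b)$, so I would re-run the same IVT argument on $[a, x_0]$, where the chain $g(a) < g(b) < g(x_0)$ now forces some $z' \in (a, x_0) \subset (a,b)$ with $g(z') = g(b)$, again giving $F(z') = F(b)$. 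The symmetric case $g(x_0) < g(a)$ is handled analogously by swapping the roles of the endpoints.

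Once $g([a,b]) = [g(a), g(b)]$ is established, the $f$-statement is quick: $f([g(a), g(b)]) = f(g([a,b])) = F([a,b])$ is the closed interval between $F(a)$ and $F(b)$, and if some $y \in (g(a), g(b))$ satisfied $f(y) \in \{F(a), F(b)\}$, surjectivity of $g|_{[a,b]}$ onto $[g(a), g(b)]$ would pull $y$ back to some $x \in (a,b)$ with $F(x) \in \{F(a), F(b)\}$, contradicting the hypothesis. For the critical-point dichotomy: if $f$ had a unique critical point $c^*$ in $(g(a), g(b))$, then $f$ would be monotone on each of $(g(a), c^*)$ and $(c^*, g(b))$ with opposite monotonicities (same monotonicity would make $c^*$ not critical), so $c^*$ would realize the extremum of $f$ on $[g(a), g(b)]$, forcing $f(c^*) \in \{f(g(a)), f(g(b))\}$ and contradicting what was just shown. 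Hence $f$ has at least two critical points in $(g(a), g(b))$; picking any consecutive pair $c_1 < c_2$, they are also neighboring critical points of $f$ globally (since $(c_1, c_2) \subset (g(a), g(b))$ excludes any intervening critical point), and both $f(c_1), f(c_2)$ lie in $f([g(a), g(b)])$, an interval of length $|F(a) - F(b)|$, yielding the required lower bound $|F(a) - F(b)| \geq |f(c_1) - f(c_2)|$.
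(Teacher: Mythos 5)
Your argument is correct and complete. Note that the paper states this result as an \emph{Observation} with no proof at all, so there is no written argument to compare against; your write-up supplies precisely the routine details the authors took for granted, and each step checks out: $F(a)\neq F(b)$ forces $g(a)\neq g(b)$; the intermediate value theorem rules out $g$ overshooting $[g(a),g(b)]$; surjectivity of $g|_{[a,b]}$ onto $[g(a),g(b)]$ transfers the ``no boundary hits'' condition from $F$ to $f$; and a unique critical point of $f$ in $(g(a),g(b))$ would be a strict extremum there, hence would map to an endpoint of $f([g(a),g(b)])=F([a,b])$, i.e.\ to $F(a)$ or $F(b)$, contradicting what was just shown. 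Two minor remarks. First, your two-stage IVT argument for excluding $g(x_0)>g(b)$ is longer than necessary: from $g(a)<g(b)<g(x_0)$ a single application of the intermediate value theorem on $[a,x_0]$ already yields $z'\in(a,x_0)\subset(a,b)$ with $g(z')=g(b)$, so the case split on whether the infimum $z$ equals $b$ can be removed. Second, the final inequality tacitly requires that among the (at least two) critical points of $f$ in $(g(a),g(b))$ there is a consecutive pair with no critical point between them; this is automatic for piecewise monotone $f$, which is the only setting in which the paper invokes this clause (in the proof of Theorem~\ref{thm:lzzbcond}), but for a completely general continuous map the notion of ``neighbouring critical points'' can degenerate. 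Neither point affects the validity of your proof in the context where the observation is used.
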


The last statement in the observation above will be used later in Theorem~\ref{thm:lzzbcond}.

\begin{lemma}\label{lem:itzigzag}
	If $f$ is zigzag-free, then $f^n$ is zigzag-free for every $n\in\N$.
\end{lemma}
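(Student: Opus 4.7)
The plan is to prove the lemma by induction on $n$, writing $f^n = f \circ f^{n-1}$ and using Observation~\ref{obs:compzz} to propagate the zigzag-free precondition backwards through the composition.

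The base case $n=1$ is the hypothesis. For the inductive step, assume $f^{n-1}$ is zigzag-free, and let $[a,b] \subset I$ satisfy the precondition of the zigzag-free property for $f^n$: $f^n([a,b]) \in \{[f^n(a), f^n(b)], [f^n(b), f^n(a)]\}$ and $f^n(x) \notin \{f^n(a), f^n(b)\}$ for every $x \in (a,b)$. Applying Observation~\ref{obs:compzz} with $F = f^n$, $g = f^{n-1}$, and outer factor $f$ yields two consequences that fit perfectly into the inductive setup. First, $f^{n-1}([a,b]) \in \{[f^{n-1}(a), f^{n-1}(b)], [f^{n-1}(b), f^{n-1}(a)]\}$ and $f^{n-1}(x) \notin \{f^{n-1}(a), f^{n-1}(b)\}$ on $(a,b)$, so the inductive hypothesis forces $f^{n-1}|_{[a,b]}$ to be a homeomorphism onto its image. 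Second, the same observation tells us that $f$, restricted to $[f^{n-1}(a), f^{n-1}(b)]$ (suitably oriented), satisfies the precondition of zigzag-freeness for $f$, so the standing hypothesis on $f$ gives that $f|_{[f^{n-1}(a), f^{n-1}(b)]}$ is also a homeomorphism onto its image.

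Finally, since $f^{n-1}|_{[a,b]}$ is a homeomorphism, its image equals the ordered interval with endpoints $f^{n-1}(a), f^{n-1}(b)$; therefore $f^n|_{[a,b]} = f|_{f^{n-1}([a,b])} \circ f^{n-1}|_{[a,b]}$ is a composition of two homeomorphisms on intervals, hence itself a homeomorphism. This closes the induction.

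The argument is essentially a bookkeeping exercise: Observation~\ref{obs:compzz} was designed precisely to say that the zigzag-free precondition passes from a composition to each factor on the appropriate interval, so there is no real obstacle. The only detail worth being careful about is the orientation of $f^{n-1}([a,b])$, which is controlled by the monotonicity of the (already established) homeomorphism $f^{n-1}|_{[a,b]}$, so that the outer restriction of $f$ is indeed applied to the correct interval.
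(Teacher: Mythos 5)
Your proof is correct and follows essentially the same route as the paper: both arguments rest entirely on Observation~3.2 to transfer the zigzag-free precondition to each factor of the composition, the only cosmetic difference being that you peel off the outer factor ($f^n=f\circ f^{n-1}$) in a formal induction while the paper peels off the inner factor ($f^n=f^{n-1}\circ f$) and iterates along the orbit of $[a,b]$. Your closing remark about tracking the orientation of $f^{n-1}([a,b])$ is the right detail to flag, and the argument goes through.
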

\begin{proof}
	We use Observation~\ref{obs:compzz} in the case when $g=f$.
	Assume that $0\leq a<b\leq 1$ are such that $f^n([a,b])=[f^n(a), f^n(b)]$ or  $f^n([a,b])=[f^n(b), f^n(a)]$                                                                                                                   and $f^n(x)\not\in\{f^n(a),f^n(b)\}$ for every $x\in(a,b)$. 
	Then $f([a,b])=[f(a),f(b)]$ or $f([a,b])=[f(b),f(a)]$, and $f(x)\not\in\{f(a),f(b)\}$ also, so since $f$ is zigzag-free, $f|_{[a,b]}$ is monotone. The same holds for $f^{i+1}|_{[f^i(a),f^i(b)]}$ for every $i\in\{1,\ldots, n-2\}$. It follows that $f^n|_{[a,b]}$ is one-to-one, so $f^n$ is zigzag-free. 
\end{proof}

\begin{definition}\label{def:longzigzag}
	A map $f\colon I\to I$ is called {\em long-zigzag} 
	if there exists $\eps>0$ such that for every $n\in\N_0$ and every interval  $[a,b]\subset I$ such that $f^n([a, b])=[f^n(a), f^n(b)]$ (or $f^n([a,b])=[f^n(b), f^n(a)]$), $f^n(x)\not\in\{f^n(a),f^n(b)\}$ for every $x\in (a,b)$, and $|f^n(a)-f^n(b)|<\eps$, it holds that $f^n$ maps $[a,b]$ homeomorphically onto $f^n([a,b])$.
\end{definition}

\begin{remark}
	Map $f\colon I\to I$ is called {\em long-branched} if there exists $\eps>0$ such that for every $n\in\N$ and every adjacent critical points $a<b$ of $f^n$ it holds that $|f^n([a,b])|>\eps$. Note that every long-branched map is also long-zigzag. For example, Minc's map (see Figure~\ref{fig:Minc}) is long-branched, since the lengths of branches of $f^n$ are bounded below by $1/3$ for every $n\in \N$. However, there are non-longbranched maps which are long-zigzag. For example, there is a dense $G_{\delta}$ set of slope values in $(\sqrt{2},2]$ of such maps in the tent map family (see \cite{BBD}). Tent maps are zigzag-free, see Lemma 2.1 from \cite{ABC1}, and therefore tent maps are long-zigzag. Nevertheless, tent maps can have arbitrary small branches. 
\end{remark}

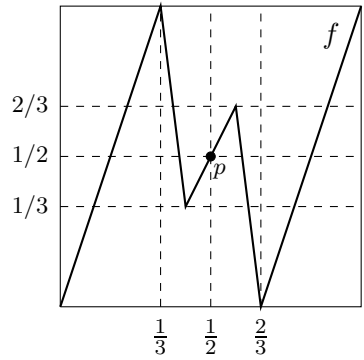
\begin{figure}
	\begin{tikzpicture}[scale=4]
	\draw [thick](0,0) -- (1/3, 1) -- (5/12, 1/3) -- (7/12,2/3) -- (2/3, 0) -- (1,1);
	\draw (0,0) -- (0,1) -- (1,1) -- (1,0) -- (0,0);
	\draw[dashed] (1/3, 0) -- (1/3, 1);
	\draw[dashed] (1/2, 0) -- (1/2, 1);
	\draw[dashed] (2/3, 0) -- (2/3, 1);
	\draw[dashed] (0, 1/3) -- (1, 1/3);
	\draw[dashed] (0, 1/2) -- (1, 1/2);
	\draw[dashed] (0, 2/3) -- (1, 2/3);
	
	\node at (1/3,-0.1) {\small $\frac{1}{3}$};
	\node at (1/2,-0.1) {\small $\frac{1}{2}$};
	\node at (2/3,-0.1) {\small $\frac{2}{3}$};
	
	\node at (-0.1,1/3) {\scriptsize $1/3$};
	\node at (-0.1,1/2) {\scriptsize $1/2$};
	\node at (-0.1,2/3) {\scriptsize $2/3$};
	
	\draw[solid, fill] (1/2,0.5) circle (0.015);
	\node at (0.9,0.9) {\small $f$};
	\node at (0.53,0.45) {\scriptsize $p$};
	\end{tikzpicture}
	\caption{Example by Minc (2001) suggested as a candidate for a counterexample to the Nadler-Quinn problem \cite{Nadler}. The map is long-branched and thus also long-zigzag. However, it is not zigzag-free.}
	\label{fig:Minc}
\end{figure}

In the following theorem we show that the set of long-zigzag maps is large. As a corollary we obtain that every locally eventually onto map $f\colon I\to I$ with a finite set of critical points is long-zigzag.

\begin{theorem}\label{thm:lzzbcond}
	Let $f\colon I\to I$ be a map with finite critical set $C=\{0=c_1<c_2,\ldots, c_N=1\}$, such that for every $i\in\{1,\ldots, N-1\}$ one of the following holds:
	\begin{itemize}
		\item[(a)] there exists $k(i)\in\N$ such that $f^{k(i)}((c_i,c_{i+1}))\cap C\neq\emptyset$, or
		\item[(b)] $f^k|_{[c_i,c_{i+1}]}$ is a homeomorphism onto its image for every $k\in\N$, and there is $\delta(i)>0$ such that $|f^k([c_i,c_{i+1}])|\geq\delta(i)$ for all $k\in\N$.
	\end{itemize} 
Then $f$ is a long-zigzag map.
\end{theorem}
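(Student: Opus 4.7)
I would construct an explicit positive constant $\eps$ from the finite data of $f$ and argue by contradiction. For each type (a) index $i$, let $L(i)$ denote the smallest $l \geq 1$ with $f^l((c_i, c_{i+1})) \cap C \neq \emptyset$; a short induction on $l$ shows $f^l|_{[c_i, c_{i+1}]}$ is a strictly monotone homeomorphism for every $l \leq L(i)$, so $|f^l(c_i) - f^l(c_{i+1})| > 0$ in this range. Set
\[
\eps := \min\Bigl(\min_{i \text{ of type (b)}} \delta(i),\ \min_{\substack{i \text{ of type (a)}\\ 1 \leq l \leq L(i)}} |f^l(c_i) - f^l(c_{i+1})|\Bigr),
\]
which is strictly positive by finiteness. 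Given $[a,b] \subset I$ and $n \in \N$ satisfying the hypotheses of Definition~\ref{def:longzigzag} with $|f^n(a) - f^n(b)| < \eps$, assume toward a contradiction that $f^n|_{[a,b]}$ is not a homeomorphism, hence not monotone.

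The first step is an initial reduction: because $f^n|_{[a,b]}$ is not monotone, there is a smallest $j^* \leq n-1$ such that $f^{j^*}([a,b])$ contains a critical point of $f$ in its interior. Since $f^{j^*}|_{[a,b]}$ is monotone, Observation~\ref{obs:compzz} lets me replace $(a,b,n)$ by $(a^{(1)}, b^{(1)}, n^{(1)}) := (f^{j^*}(a), f^{j^*}(b), n-j^*)$ without altering the zigzag hypothesis, the value $|f^n(a) - f^n(b)|$, or the non-monotonicity of $f^{n^{(1)}}|_{[a^{(1)}, b^{(1)}]}$ (the last point follows because $[a^{(1)}, b^{(1)}]$ contains a critical point of $f$ in its interior and $n^{(1)} \geq 1$).

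The core of the argument is then an iterative reduction from $(a^{(s)}, b^{(s)}, n^{(s)})$. Applying Observation~\ref{obs:compzz} to $f^{n^{(s)}} = f^{n^{(s)}-1} \circ f$ shows that $f$ itself satisfies the zigzag condition on $[a^{(s)}, b^{(s)}]$; a short case analysis then rules out the possibility that $[a^{(s)}, b^{(s)}]$ contains exactly one critical point of $f$ in its interior (the corresponding local extremum of $f$ would have to lie strictly between $f(a^{(s)})$ and $f(b^{(s)})$, which is impossible for a function monotone on each of the two complementary halves). So $[a^{(s)}, b^{(s)}]$ contains a full monotonicity interval $[c_{i_s}, c_{i_s+1}]$ of $f$, and I split into three cases. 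If $[c_{i_s}, c_{i_s+1}]$ is of type (b), then $f^{n^{(s)}}|_{[c_{i_s}, c_{i_s+1}]}$ is a homeomorphism of image length at least $\delta(i_s)$, and the inclusion $f^{n^{(s)}}([c_{i_s}, c_{i_s+1}]) \subseteq f^{n^{(s)}}([a^{(s)}, b^{(s)}]) = [f^n(a), f^n(b)]$ yields $|f^n(a) - f^n(b)| \geq \eps$, a contradiction. If it is of type (a) with $n^{(s)} \leq L(i_s)$, then $f^{n^{(s)}}|_{[c_{i_s}, c_{i_s+1}]}$ is still a monotone homeomorphism whose image has length $|f^{n^{(s)}}(c_{i_s}) - f^{n^{(s)}}(c_{i_s+1})| \geq \eps$, giving the same contradiction. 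Otherwise $n^{(s)} > L(i_s)$, and I reduce to $[a^{(s+1)}, b^{(s+1)}] := f^{L(i_s)}([a^{(s)}, b^{(s)}])$, $n^{(s+1)} := n^{(s)} - L(i_s) \geq 1$; Observation~\ref{obs:compzz} preserves all the hypotheses, and a critical point $c^* \in f^{L(i_s)}((c_{i_s}, c_{i_s+1})) \cap C$ now lies in the open interval $(a^{(s+1)}, b^{(s+1)})$.

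Since $n^{(s+1)} < n^{(s)}$, this reduction terminates after at most $n$ steps, forcing case (b) or case (a)-small-iterate and hence a contradiction. The main obstacle I anticipate is verifying that in the type (a) reduction the newly introduced critical point $c^*$ lies in the \emph{interior} of $[a^{(s+1)}, b^{(s+1)}]$ rather than on its boundary; this depends crucially on the clause of Observation~\ref{obs:compzz} asserting that $f^{L(i_s)}$ does not attain its endpoint values $f^{L(i_s)}(a^{(s)}), f^{L(i_s)}(b^{(s)})$ on the open interval $(a^{(s)}, b^{(s)})$.
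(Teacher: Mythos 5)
Your proof is correct, and in substance it is the paper's proof: the constant $\eps$ you build is the same one (for $j\le k(i)$ the quantities $|f^j([c_i,c_{i+1}])|$ and $|f^j(c_i)-f^j(c_{i+1})|$ coincide), and the decisive step in both arguments is Observation~\ref{obs:compzz}, used to locate a full monotonicity branch $[c_i,c_{i+1}]$ inside some iterate $f^m([a,b])$ and then to bound $|f^n(a)-f^n(b)|$ from below by the length of that branch's image under the remaining iterates. The only real difference is the direction of the induction. The paper walks \emph{backward}: it takes the largest $m<n$ at which $f|_{f^m([a,b])}$ fails to be monotone, so that $f^{n-m}$ is automatically injective on the branch it finds there; the estimate $|f^n(a)-f^n(b)|\ge |f^{n-m}([c_i,c_{i+1}])|\ge\eps$ then follows in one step, with no need to distinguish type (a) from type (b) branches. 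Your \emph{forward} induction processes the folds in order of first occurrence, which is what forces the three-way case split on the type of the branch and on whether $n^{(s)}\le L(i_s)$, the explicit termination argument, and the care about keeping the new critical point $c^*$ in the \emph{interior} of the pushed-forward interval (for which you correctly invoke the clause of Observation~\ref{obs:compzz} saying that interior points avoid the endpoint values). Both organizations are sound; the backward one is shorter because monotonicity of the tail comes for free, while yours has the minor merit of spelling out exactly where that interior clause is indispensable.
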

\begin{proof}
	Let $\Sigma=\{i\in\{1, \ldots, N-1\}: i \textrm{ satisfies property (a)}\}$, and $\Delta=\{i\in\{1, \ldots, N-1\}: i \textrm{ satisfies property (b)}\}$. Then $\Sigma\cap \Delta=\emptyset$.
	
	For $i\in \Sigma$ let $k(i)\in\N$ be the minimal natural number such that there is a critical point of $f$ in $f^{k(i)}((c_i,c_{i+1}))$.
	
	Define 
	$$\eps:=\min\left\{\left\{\min_{1\leq j\leq k(i)}|f^j([c_i,c_{i+1}])|:i\in \Sigma\right\}\cup\{\delta(i): i\in \Delta\}\right\}$$
	
	and observe that $\eps>0$.
	We will show that for every $n\in\N$ and every $[a,b]\subset I$ such that $f^n|_{[a,b]}$ is not monotone, if $f^n([a,b])=[f^n(a),f^n(b)]$ (or $[f^n(b),f^n(a)]$) and  $f^n(x)\not\in\{f(a),f(b)\}$ for all $x\in(a,b)$, then it holds that $|f^n(a)-f^n(b)|\geq \eps$.

	Fix $n\in\N$ and $[a,b]\subset I$. By Observation~\ref{obs:compzz}, either $f|_{f^{n-1}([a,b])}$ is monotone, or $f$ contains at least two critical points in $f^{n-1}([a,b])$. In the later case $|f^n(a)-f^n(b)|\geq \min\{|f(c_i)-f(c_{i+1})|: i\in\{1,\ldots, N-1\}\}\geq \eps.$
	
	Assume $f|_{f^{n-1}([a,b])}$ is monotone. Again, either $f|_{f^{n-2}([a,b])}$ is monotone, or $f$ contains at least two critical points in $f^{n-2}([a,b])$. In the later case $|f^{n-1}(a)-f^{n-1}(b)|\geq \min\{|f(c_i)-f(c_{i+1})|: i\in\{1,\ldots, N-1\}\}.$ Since $f|_{f^{n-1}([a,b])}$ is monotone, it follows that $f^2|_{[c_i,c_{i+1}]}$ is monotone, and we conclude that $|f^n(a)-f^n(b)|\geq |f^2([c_i,c_{i+1}])|\geq \eps$.
	
	We continue inductively and conclude that $|f^n(a)-f^n(b)|\geq\eps$.
\end{proof}

\begin{definition}
	We say that a map $f\colon I\to I$ is {\em locally eventually onto (leo)} if for every $[a, b]\subset I$ there exists $n\in\N$ such that $f^n([a,b])=I$.
\end{definition}

\begin{corollary}\label{thm:leo+fpwl=lzzb}
	Every piecewise monotone leo map $f\colon I\to I$ is long-zigzag.
\end{corollary}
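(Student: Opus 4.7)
The plan is to reduce this corollary to Theorem~\ref{thm:lzzbcond} by verifying its hypothesis (a) for every subinterval between consecutive critical points, where the leo hypothesis does all the work. Since $f$ is piecewise monotone, the critical set $C=\{0=c_1<c_2<\ldots<c_N=1\}$ is finite, so the setup of Theorem~\ref{thm:lzzbcond} applies.

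Before fixing $i$, I would first dispose of the degenerate case $N=2$. If $C=\{0,1\}$, then $f$ is monotone on $[0,1]$, hence a homeomorphism. Such a map has a fixed point in $I$, so choosing any interval $[a,b]$ disjoint from that fixed point gives $f^n([a,b])\not\ni \Fix(f)$ for all $n$, contradicting leo. Thus $N\ge 3$, so $C$ contains at least one critical point in $(0,1)$ in addition to $\{0,1\}$, giving $|C|\ge 3$.

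Now fix $i\in\{1,\ldots,N-1\}$. By the leo property there exists $k(i)\in\N$ with $f^{k(i)}([c_i,c_{i+1}])=I$. Set $J:=f^{k(i)}((c_i,c_{i+1}))$. As the continuous image of a connected set, $J$ is an interval, and the decomposition
$$f^{k(i)}([c_i,c_{i+1}])=\{f^{k(i)}(c_i)\}\cup J\cup\{f^{k(i)}(c_{i+1})\}$$
shows that $J\supseteq I\setminus\{f^{k(i)}(c_i), f^{k(i)}(c_{i+1})\}$, so $|I\setminus J|\le 2$. Since $|C|\ge 3$, we conclude $J\cap C\ne\emptyset$, which is precisely condition (a) of Theorem~\ref{thm:lzzbcond}. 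As $i$ was arbitrary, every $i\in\{1,\ldots,N-1\}$ satisfies (a), so Theorem~\ref{thm:lzzbcond} yields that $f$ is long-zigzag.

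I expect no substantive obstacle beyond the minor subtlety of relating the image of the \emph{open} interval $(c_i,c_{i+1})$ (as required by condition (a)) to the image of the \emph{closed} interval $[c_i,c_{i+1}]$ (as provided by leo); this is resolved cleanly by the observation that $J$ is a sub-interval of $I$ missing at most two points, which cannot avoid a critical set of size $\ge 3$.
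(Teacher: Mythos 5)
Your proof is correct and follows essentially the same route as the paper: the paper's one-line argument also reduces to Theorem~\ref{thm:lzzbcond} by observing that leo gives $f^{k}([c_i,c_{i+1}])=I\supset C$, hence condition (a). Your additional care with the open-versus-closed interval in condition (a) and the degenerate monotone case are worthwhile details that the paper leaves implicit.
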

\begin{proof}
	Since for every $[c_i,c_{i+1}]$ there is $k\in\N$ such that $f^k([c_i, c_{i+1}])=I$, the proof follows directly from the previous theorem.
\end{proof}

\begin{remark}
	It is obviously not enough to assume only piecewise monotonicity of $f$ in Corollary~\ref{thm:leo+fpwl=lzzb}, see \eg map on Figure~\ref{fig:doublespiral1} and its iterations. 
\end{remark}

\begin{corollary}\label{cor:Raines}
	If $f\colon I\to I$ is piecewise monotone map with the property that for every interval $J\subset I$, and every connected component $A$ of $f^{-1}(J)$ it holds that $|A|\leq |J|$, then $f$ is long-zigzag.
\end{corollary}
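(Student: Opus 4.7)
The plan is to verify the hypothesis of Theorem~\ref{thm:lzzbcond}. Write $C = \{0 = c_1 < c_2 < \ldots < c_N = 1\}$. Fix $i \in \{1, \ldots, N-1\}$ and suppose condition (a) fails, i.e.\ no forward iterate $f^k((c_i, c_{i+1}))$ meets $C$. Then a routine induction shows that $f^k|_{[c_i, c_{i+1}]}$ is a homeomorphism onto its image for every $k \in \N$, so the first half of (b) is automatic. What remains is the lower bound $|f^k([c_i, c_{i+1}])| \geq \delta(i)$, and this is where the backward non-expansion hypothesis enters.

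The key auxiliary statement I will prove by induction on $n$ is: \emph{for every interval $J \subset I$, every connected component $A$ of $f^{-n}(J)$ satisfies $|A| \leq |J|$.} The base case $n = 1$ is exactly the hypothesis of the corollary. For the inductive step, let $A$ be a component of $f^{-n}(J)$ and let $A'$ be the (unique) component of $f^{-(n-1)}(J)$ containing the connected set $f(A)$. Clearly $A \subseteq f^{-1}(A')$. The subclaim is that $A$ is in fact a component of $f^{-1}(A')$: if $B$ denotes the component of $f^{-1}(A')$ containing $A$, then $f(B) \subseteq A' \subseteq f^{-(n-1)}(J)$ gives $B \subseteq f^{-n}(J)$, and maximality of $A$ as a component of $f^{-n}(J)$ forces $B = A$. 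The base hypothesis then yields $|A| \leq |A'|$, and the inductive hypothesis gives $|A'| \leq |J|$, completing the induction.

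Granted this lemma, the corollary follows in one line: set $J_k := f^k([c_i, c_{i+1}])$; since $[c_i, c_{i+1}]$ is a connected subset of $f^{-k}(J_k)$, it lies inside some component $A$ of $f^{-k}(J_k)$, and the lemma gives
$$c_{i+1} - c_i \;\leq\; |A| \;\leq\; |J_k| \;=\; |f^k([c_i, c_{i+1}])|.$$
Hence condition (b) of Theorem~\ref{thm:lzzbcond} holds with $\delta(i) := c_{i+1} - c_i > 0$, and the theorem concludes that $f$ is long-zigzag.

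The main delicate point I anticipate is the subclaim that $A$ is a component of $f^{-1}(A')$ and not merely contained in one — this is where one has to be careful distinguishing components taken with respect to two different sets, and it is the sole place the proof does more than unwinding definitions. Everything else, including the verification that failure of (a) gives a homeomorphism in (b), is routine.
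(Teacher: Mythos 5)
Your proof is correct and follows essentially the same route as the paper: both verify the hypotheses of Theorem~\ref{thm:lzzbcond} with $\delta(i)=c_{i+1}-c_i$, using the fact that components of $f^{-k}(f^k([c_i,c_{i+1}]))$ are no longer than the image. The only difference is that the paper asserts the inequality $|f^k([c_i,c_{i+1}])|\geq |c_{i+1}-c_i|$ in one line, whereas you supply the (correct) inductive argument on preimage components that justifies it.
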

\begin{proof}
	Let $C=\{0=c_1< c_2 < \ldots < c_N=1\}$ be the critical set of $f$. Note that for every $i\in\{1,\ldots, N-1\}$ for which $f^k|_{[c_i,c_{i+1}]}$ is one-to-one, it holds that $|f^k([c_i,c_{i+1}])|\geq |c_{i+1}-c_i|$. We define $\delta_i:=|c_{i+1}-c_i|$ and apply Theorem~\ref{thm:lzzbcond}. 
\end{proof}

\section{Types of endpoints}\label{sec:endpoints}

In this section we study different notions of an endpoint in interval inverse limits and how they emerge. We study $\underleftarrow{\lim}\{I, f_i\}$, where $f_i\colon I\to I$ is a continuous surjection for every $i\in\N$.

\begin{definition} 
	Let $K$ be a chainable continuum.
	We say that $x\in K$ is an {\em endpoint} if for every subcontinua $A, B\subset K$ such that $x\in A\cap B$, it holds that $A\subset B$ or $B\subset A$. We say that $x\in K$ is an {\em $L$-endpoint} (definition is due to Lelek \cite{Le}) if it is an endpoint of every arc in $K$ which contains it. 
\end{definition}

Note that if $x$ is in no arc of $K$, $x$ is automatically an $L$-endpoint.
Lelek's definition of an endpoint is usually used in the case when $K$ is a dendroid, and the first definition is used when $K$ is chainable. However, we will see that $L$-endpoints are of significant importance also in chainable continua. Note that every endpoint is also $L$-endpoint, but the converse is not true. For instance, every vertex of degree one in a simple triod is an $L$-endpoint but not an endpoint. Counterexamples can also be found among chainable continua, for example if we attach an arc to any point of the pseudo-arc; then the point of the intersection is an $L$-endpoint but not an endpoint.


In this section we partially answer the following problem, and use the obtained results in the subsequent sections.
\begin{problem}
 For which chainable continua $\underleftarrow{\lim}\{I, f_i\}$ are the two definitions of endpoints equivalent?
\end{problem}

Bruin shows in \cite{Br1} that the definitions are equivalent for inverse limits on intervals generated by a single {\em unimodal} bonding map. There he introduces a notion of a basic arc and then proves that in unimodal inverse limits a point $x$ is an endpoint if and only if it is an endpoint of its basic arc and is not ``glued" to any other basic arc in $x$. 
We generalize the notion of a basic arc here and then reformulate Bruin's result. 

\begin{definition}
	Let $X=\underleftarrow{\lim}\{I, f_i\}$ and $x=(x_0, x_1, \ldots)\in X$. For $i\in\N_0$ we define 
	the {\em $i$-basic arc} $A_i(x)$ as the maximal arc in $X$ such that $x\in A_i(x)$ and $\pi_i|_{A_i(x)}$ is one-to-one (can be degenerate).
\end{definition}

Observe that if $J_{i+k}\subset I$ are closed intervals such that $x_{i+k}\in J_{i+k}$ and $f_{i}\circ\ldots\circ f_{i+k-1}|_{J_{i+k}}$ is one-to-one and onto $J_i$ for every $k>0$, then $\{x\in X: x_{i+k}\in J_{i+k}, k> 0\}\subset A_i(x)$.

\begin{proposition}[Proposition 2 in \cite{Br1}]\label{prop:Bruin}
	Let $T_s$ be a tent map with slope $s\in (1, 2]$. Then $x\in \underleftarrow{\lim}\{I, T_s\}$ is an endpoint of $\underleftarrow{\lim}\{I, T_s\}$ if and only if it is an endpoint of $A_i(x)$ for every $i\in\N_0$ (see Figure~\ref{fig:uils}).
\end{proposition}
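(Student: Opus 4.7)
The forward direction follows from hereditariness of the endpoint property: if $x$ is an endpoint of $X=\underleftarrow{\lim}\{I,T_s\}$ and $K\subset X$ is any subcontinuum containing $x$, then any two subcontinua $B_1, B_2\subset K$ through $x$ are \emph{a fortiori} subcontinua of $X$ through $x$, hence comparable by inclusion. Applied to $K=A_i(x)$, this shows that $x$ is an endpoint of every (possibly degenerate) basic arc.

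For the converse, I would build on two structural observations about basic arcs. First, the basic arcs through $x$ are nested, $A_0(x)\subset A_1(x)\subset A_2(x)\subset\cdots$: if $\pi_i|_A$ is one-to-one on an arc $A\ni x$, then since $\pi_i=T_s\circ\pi_{i+1}$, also $\pi_{i+1}|_A$ is one-to-one, so by the maximality in the definition of $A_{i+1}(x)$ we obtain $A_i(x)\subset A_{i+1}(x)$. Second, for any subcontinuum $K\ni x$ and each $i$, the set $K\cap A_i(x)$ is a sub-arc of $A_i(x)$ containing $x$, because $\pi_i|_{A_i(x)}$ is a homeomorphism onto the interval $\pi_i(A_i(x))$ and $\pi_i(K)$ is a subinterval of $I$ containing $x_i$, so the preimage under $\pi_i|_{A_i(x)}$ is a subinterval of the arc $A_i(x)$.

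Assuming $x$ is an endpoint of every $A_i(x)$, parametrize each $A_i(x)$ so that $x$ corresponds to $0$; by the hypothesis and the second observation, for any subcontinuum $K\ni x$ we have $K\cap A_i(x)=[0,\rho_i(K)]$. In particular, for two subcontinua $A,B\ni x$ the intersections $A\cap A_i(x)$ and $B\cap A_i(x)$ are comparable by inclusion for every $i$. Passing to a subsequence we may assume $A\cap A_{i_k}(x)\subset B\cap A_{i_k}(x)$ for all $k$, whence
$$\overline{\bigcup_k A\cap A_{i_k}(x)}\subset\overline{\bigcup_k B\cap A_{i_k}(x)}\subset B.$$
The conclusion $A\subset B$ would then follow once one knows that $A$ is recovered as the closure of its traces on the basic arcs through $x$.

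The main obstacle is precisely this last step: showing $A=\overline{\bigcup_k A\cap A_{i_k}(x)}$ (and analogously for $B$). For tent maps this can be accomplished using tent-map-specific features, namely that the basic arcs through $x$ essentially exhaust the composant of $x$ in the (generically) indecomposable continuum $X$. One route is a symbolic-dynamical analysis of arcs via kneading sequences in the spirit of Brucks--Diamond; an alternative is a chain-theoretic argument exploiting chainability of $X$, refining $\eps$-mesh chains along the basic-arc decomposition so that a $B$-endpoint is forced into an end link. Isolating precisely which structural ingredient carries this step is what motivates the long-zigzag leo generalization given in Theorem~4.7 below.
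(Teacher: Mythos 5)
Your forward direction is fine. For the converse, however, there is a genuine gap, and you have correctly located it yourself: the claim that a subcontinuum $A\ni x$ satisfies $A=\overline{\bigcup_i\bigl(A\cap A_i(x)\bigr)}$ is exactly the hard content of the proposition, and nothing in your proposal supplies it. This is not a formality: proper subcontinua of $\underleftarrow{\lim}\{I,T_s\}$ need not be arcs, and a priori a nondegenerate subcontinuum through $x$ could meet the increasing union $\bigcup_i A_i(x)$ in a set whose closure is strictly smaller than $A$ --- ruling this out is essentially the same issue as ruling out the ``double spiral''/transversal-approach behaviour that Definition~\ref{def:doublespiral} and Figures~\ref{fig:doublespiral1}--\ref{fig:two} illustrate for other bonding maps. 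So the reduction you perform (comparing traces on basic arcs and passing to a subsequence) repackages the statement without proving it. A smaller point: your justification that $K\cap A_i(x)$ is a sub-arc does not work as written, since $(\pi_i|_{A_i(x)})^{-1}(\pi_i(K))$ may be strictly larger than $K\cap A_i(x)$; the correct reason the intersection is connected is hereditary unicoherence of chainable continua.

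For comparison: the paper does not reprove this proposition (it is quoted from \cite{Br1}), but its Lemma~\ref{lem:endpts} establishes the generalization that for zigzag-free bonding maps --- and tent maps are zigzag-free by Lemma~2.1 of \cite{ABC1} --- every $B$-endpoint is an endpoint, and the argument there avoids your obstacle entirely by arguing contrapositively. If $x$ is not an endpoint, one takes incomparable subcontinua $A,B\ni x$, projects them to intervals $A_i,B_i\ni x_i$ with $A_i\setminus B_i,\ B_i\setminus A_i\neq\emptyset$ for all large $i$, and uses zigzag-freeness to find for each such $i$ an interval around $x_{i+1}$ mapped homeomorphically onto $A_i\cup B_i$; pulling these back coherently produces an arc containing $x$ in its interior inside a basic arc, so $x$ is not a $B$-endpoint. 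To complete your direct argument you would need an input of exactly this strength (or Bruin's symbolic analysis of kneading sequences), so I recommend switching to the contrapositive.
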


\begin{figure}
	\begin{tikzpicture}[scale=4]
	\draw (0,1.6)--(1,1.6);
	\draw (0,1.5)--(1,1.5);
	\draw (0,1.58)--(1,1.58);
	\draw (0,1.52)--(1,1.52);
	\draw (0,1.38)--(1,1.38);
	\draw (0,1.3)--(1,1.3);
	\draw (0,1.365)--(1,1.365);
	\draw (0,1.315)--(1,1.315);
	\draw (0,1.16)--(1,1.16);
	\draw (0,1.1)--(1,1.1);
	\draw (0,1.15)--(1,1.15);
	\draw (0,1.11)--(1,1.11);
	\draw[domain=90:270] plot ({0.05*cos(\x)}, {1.55+0.05*sin(\x)});
	\draw[domain=90:270] plot ({0.03*cos(\x)}, {1.55+0.03*sin(\x)});
	\draw[domain=90:270] plot ({0.04*cos(\x)}, {1.34+0.04*sin(\x)});
	\draw[domain=90:270] plot ({0.025*cos(\x)}, {1.34+0.025*sin(\x)});
	\draw[domain=90:270] plot ({0.03*cos(\x)}, {1.13+0.03*sin(\x)});
	\draw[domain=90:270] plot ({0.02*cos(\x)}, {1.13+0.02*sin(\x)});
	\draw[thick] (0, 1)--(1,1);
	\node[circle,fill, inner sep=1] at (0,1){};
	\node at (-0.07,0.98) {$x$};
	\node at (0.5,0.85) {\small flat endpoint ($\E_F$)};
	\end{tikzpicture}
	\begin{tikzpicture}[scale=1]
	\draw[dashed] (1,1) circle (1);
	\node[circle,fill, inner sep=1] at (1,1){};
	\node at (1,0.8) {\small $x$};
	\node at (0.5,1) {?};
	\node at (1.5,1) {?};
	\node at (1,1.5) {?};
	\node at (1,0.3) {?};
	\node at (0.9,-0.5) {\small nasty endpoint ($\E_N$)};
	\end{tikzpicture}
	\vspace{10pt}
	\begin{tikzpicture}[scale=5]
	\draw (0.1,0.75)--(0.8,0.75);
	\draw (0.3,0.62)--(0.8,0.62);
	\draw (0.3,0.56)--(0.65,0.56);
	\draw (0.4,0.53)--(0.65,0.53);
	\draw (0.4,0.51)--(0.55,0.51);
	\draw[domain=270:450] plot ({0.8+0.065*cos(\x)}, {0.685+0.065*sin(\x)});
	\draw[domain=90:270] plot ({0.3+0.03*cos(\x)}, {0.59+0.03*sin(\x)});
	\draw[domain=270:450] plot ({0.65+0.015*cos(\x)}, {0.545+0.015*sin(\x)});
	\draw[domain=90:270] plot ({0.4+0.01*cos(\x)}, {0.52+0.01*sin(\x)});
	\node[circle,fill, inner sep=1] at (0.5,0.47){};
	\node at (0.5,0.42) {$x$};
	\node at (0.5,0.3) {\small spiral endpoint ($\E_S$)};
	\end{tikzpicture}
	\caption{Different types of endpoints in unimodal inverse limits. For further properties of endpoints in the unimodal setting see \cite{AABC}.}
	\label{fig:uils}
\end{figure}
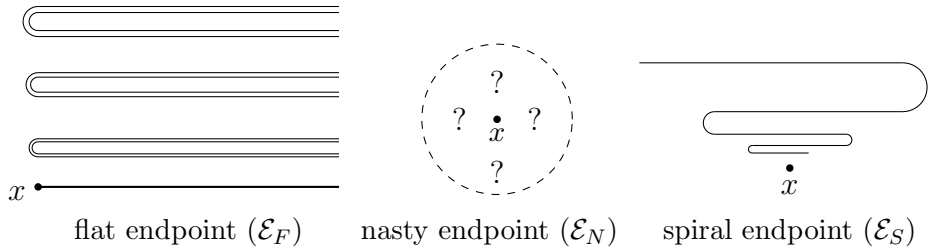

Proposition~\ref{prop:Bruin} motivates the third definition of an endpoint in $X=\underleftarrow{\lim}\{I, f_i\}$. 

\begin{definition}
	We say that $x\in X=\underleftarrow{\lim}\{I, f_i\}$ is a $B$-endpoint if it is an endpoint of $A_i(x)$ for every $i\in\N_0$.
\end{definition}

Note that the definition above depends on the inverse limit representation of $X$. For example, an arc can be represented as an inverse limit on $I$ generated by the identity, or as a double spiral generated by a bonding map in Figure~\ref{fig:doublespiral1}. In the first representation the whole inverse limit space is a single basic arc and a point is an endpoint if and only if it is a $B$-endpoint. In the latter representation, for spiral point $x=(1/2, 1/2, \ldots)$ every $A_i(x)$ is degenerate and thus $x$ is a $B$-endpoint. Obviously, $x$ is not an endpoint.

\begin{figure}
	\centering
	
	\begin{tikzpicture}[scale=5]
	\draw (0.1,0.75)--(0.8,0.75);
	\draw (0.3,0.62)--(0.8,0.62);
	\draw (0.3,0.56)--(0.65,0.56);
	\draw (0.4,0.53)--(0.65,0.53);
	\draw (0.4,0.51)--(0.55,0.51);
	\draw[domain=270:450] plot ({0.8+0.065*cos(\x)}, {0.685+0.065*sin(\x)});
	\draw[domain=90:270] plot ({0.3+0.03*cos(\x)}, {0.59+0.03*sin(\x)});
	\draw[domain=270:450] plot ({0.65+0.015*cos(\x)}, {0.545+0.015*sin(\x)});
	\draw[domain=90:270] plot ({0.4+0.01*cos(\x)}, {0.52+0.01*sin(\x)});
	\node[circle,fill, inner sep=1] at (0.5,0.47){};
	\begin{scope}[yscale=-1,xscale=-1,yshift=-0.94cm,xshift=-1cm]
	\draw (0.1,0.75)--(0.8,0.75);
	\draw (0.3,0.62)--(0.8,0.62);
	\draw (0.3,0.56)--(0.65,0.56);
	\draw (0.4,0.53)--(0.65,0.53);
	\draw (0.4,0.51)--(0.55,0.51);
	\draw[domain=270:450] plot ({0.8+0.065*cos(\x)}, {0.685+0.065*sin(\x)});
	\draw[domain=90:270] plot ({0.3+0.03*cos(\x)}, {0.59+0.03*sin(\x)});
	\draw[domain=270:450] plot ({0.65+0.015*cos(\x)}, {0.545+0.015*sin(\x)});
	\draw[domain=90:270] plot ({0.4+0.01*cos(\x)}, {0.52+0.01*sin(\x)});
	\end{scope}
	\end{tikzpicture}
	\hspace{20pt}
	\begin{tikzpicture}[scale=3]
	\draw (0,0)--(0,1)--(1,1)--(1,0)--(0,0);
	\draw[thick] (0,0)--(1/3,5/9)--(2/3,4/9)--(1,1);
	\draw[dashed] (0,5/9)--(5/9,5/9)--(5/9,0);
	\draw[dashed] (1,4/9)--(4/9,4/9)--(4/9,1);
	\node[circle,fill, inner sep=1] at (0.5,0.5){};
	\draw[dashed] (0,0)--(1,1);
	\end{tikzpicture}

	\caption{Representation of an arc in which point in the interior is a $B$-endpoint.}
		\label{fig:doublespiral1}
\end{figure}
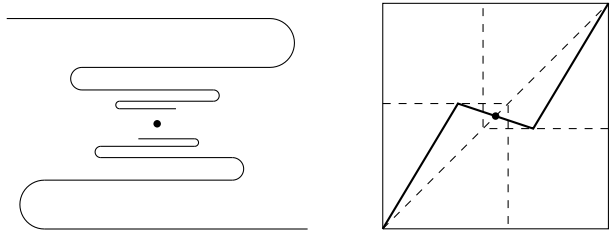

It follows directly from the definitions that every $L$-endpoint is a $B$-endpoint. For further examples of $B$-endpoints which are not $L$-endpoints see Figure~\ref{fig:one} and Figure~\ref{fig:two}. 

In general we have 
$$\{\text{endpoints}\}\subset \{L\text{-endpoints}\} \subset \{B\text{-endpoints}\}.$$ Thus we pose the following natural problem.  

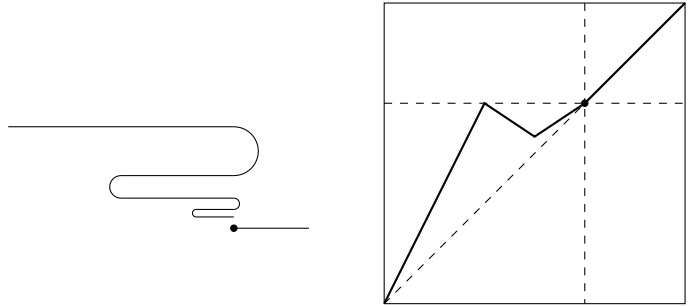
\begin{figure}
	\centering
	\begin{tikzpicture}[scale=5]
	\draw (0.2,0.75+0.2)--(0.8,0.75+0.2);
	\draw (0.5,0.62+0.2)--(0.8,0.62+0.2);
	\draw (0.5,0.56+0.2)--(0.8,0.56+0.2);
	\draw (0.7,0.53+0.2)--(0.8,0.53+0.2);
	\draw (0.7,0.51+0.2)--(0.8,0.51+0.2);
	\draw[domain=270:450] plot ({0.8+0.065*cos(\x)}, {0.685+0.2+0.065*sin(\x)});
	\draw[domain=90:270] plot ({0.5+0.03*cos(\x)}, {0.59+0.2+0.03*sin(\x)});
	\draw[domain=270:450] plot ({0.8+0.015*cos(\x)}, {0.545+0.2+0.015*sin(\x)});
	\draw[domain=90:270] plot ({0.7+0.01*cos(\x)}, {0.52+0.2+0.01*sin(\x)});
	\draw (0.8, 0.48+0.2)--(1,0.48+0.2);
	\node[circle,fill, inner sep=1] at (0.8,0.48+0.2){};
	\node[circle, inner sep=0] at (0.8,0.48){};
	\end{tikzpicture}
	\hspace{20pt}
	\begin{tikzpicture}[scale=4]
	\draw (0,0)--(0,1)--(1,1)--(1,0)--(0,0);
	\draw[dashed] (0,2/3)--(2/3,2/3)--(2/3,0);
	\draw[dashed] (2/3,1)--(2/3,2/3)--(1,2/3);
	\draw[dashed] (0,0)--(1,1);
	\draw[thick] (0,0)--(1/3,2/3)--(1/2,5/9)--(2/3,2/3)--(1,1);
	\node[circle,fill, inner sep=1] at (2/3,2/3){};
	\end{tikzpicture}
	\caption{$B$-endpoint which is not an $L$-endpoint.}
		\label{fig:one}
\end{figure}

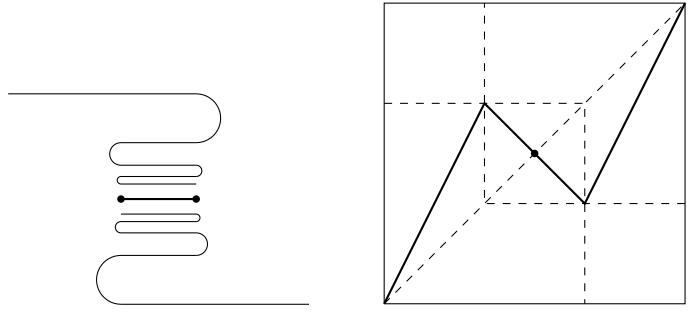
\begin{figure}
	\centering
	
	\begin{tikzpicture}[scale=5]
	\draw (0.1,0.75)--(0.6,0.75);
	\draw (0.4,0.62)--(0.6,0.62);
	\draw (0.4,0.56)--(0.6,0.56);
	\draw (0.4,0.53)--(0.6,0.53);
	\draw (0.4,0.51)--(0.6,0.51);
	\draw[domain=270:450] plot ({0.6+0.065*cos(\x)}, {0.685+0.065*sin(\x)});
	\draw[domain=90:270] plot ({0.4+0.03*cos(\x)}, {0.59+0.03*sin(\x)});
	\draw[domain=270:450] plot ({0.6+0.015*cos(\x)}, {0.545+0.015*sin(\x)});
	\draw[domain=90:270] plot ({0.4+0.01*cos(\x)}, {0.52+0.01*sin(\x)});
	\draw[thick] (0.4,0.47)--(0.6,0.47);
	\node[circle,fill, inner sep=1] at (0.4,0.47){};
	\node[circle,fill, inner sep=1] at (0.6,0.47){};
	\begin{scope}[yscale=-1,xscale=-1,yshift=-0.94cm,xshift=-1cm]
	\draw (0.1,0.75)--(0.6,0.75);
	\draw (0.4,0.62)--(0.6,0.62);
	\draw (0.4,0.56)--(0.6,0.56);
	\draw (0.4,0.53)--(0.6,0.53);
	\draw (0.4,0.51)--(0.6,0.51);
	\draw[domain=270:450] plot ({0.6+0.065*cos(\x)}, {0.685+0.065*sin(\x)});
	\draw[domain=90:270] plot ({0.4+0.03*cos(\x)}, {0.59+0.03*sin(\x)});
	\draw[domain=270:450] plot ({0.6+0.015*cos(\x)}, {0.545+0.015*sin(\x)});
	\draw[domain=90:270] plot ({0.4+0.01*cos(\x)}, {0.52+0.01*sin(\x)});
	\end{scope}
	\end{tikzpicture}
	\hspace{20pt}
	\vspace{10pt}
	\begin{tikzpicture}[scale=4]
	\draw (0,0)--(0,1)--(1,1)--(1,0)--(0,0);
	\draw[thick] (0,0)--(1/3,2/3)--(2/3,1/3)--(1,1);
	\draw[dashed] (0,2/3)--(2/3,2/3)--(2/3,0);
	\draw[dashed] (1,1/3)--(1/3,1/3)--(1/3,1);
	\node[circle,fill, inner sep=1] at (0.5,0.5){};
	\draw[dashed] (0,0)--(1,1);
	\end{tikzpicture}
	\caption{Example of a $B$-endpoint which is not an endpoint, where $X$ is not an arc.}
	\label{fig:two}
\end{figure}

\begin{problem}
	For which $\underleftarrow{\lim}\{I,f_i\}$ is every $B$-endpoint an endpoint?
\end{problem}

In this section we show that every B-endpoint is an endpoint in the case when all $f_i$
are zigzag-free or long-zigzag and leo. Later we study chainable continua for which all inhomogeneities are endpoints. It will be important to restrict to the cases in which endpoints are B-endpoints, see the proof of Theorem~\ref{thm:FisE}.


\begin{lemma}\label{lem:endpts}
	Assume that every map $f_i$ from $\underleftarrow{\lim}\{I, f_i\}$ is zigzag-free. Then $x\in \underleftarrow{\lim}\{I, f_i\}$ is an endpoint if and only if it is a $B$-endpoint. Specially, all three definitions of endpoints are equivalent.
\end{lemma}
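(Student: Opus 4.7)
I will prove the implications endpoint $\Rightarrow$ $L$-endpoint $\Rightarrow$ $B$-endpoint directly from the definitions: if $x$ is an endpoint of $X$ and $\alpha \ni x$ is any arc, the two sub-arcs of $\alpha$ meeting at $x$ are subcontinua through $x$ and hence comparable by inclusion, which forces $x$ to be an endpoint of $\alpha$. Applied to each basic arc $A_i(x)$, this yields the $B$-endpoint condition. The content of the lemma is therefore the converse direction: $B$-endpoint $\Rightarrow$ endpoint.

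I would establish the converse by contradiction. Assume $x$ is a $B$-endpoint and that there exist subcontinua $A, B \subset X$ with $x \in A \cap B$, $A \not\subset B$, and $B \not\subset A$. Fix $p \in A \setminus B$ and $q \in B \setminus A$. Since basic open sets of $X$ have the form $\pi_j^{-1}(U)$ and $A, B$ are closed, there exists an index $j_0$ such that $p_m \in \pi_m(A) \setminus \pi_m(B)$ and $q_m \in \pi_m(B) \setminus \pi_m(A)$ for every $m \geq j_0$. Consequently, for all such $m$, the intervals $\pi_m(A), \pi_m(B) \subset I$ both contain $x_m$ but neither is contained in the other, so the union $\pi_m(A \cup B)$ is an interval with $x_m$ strictly in its interior, with one extreme point approached within $A$ toward $p$ and the other within $B$ toward $q$.

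The core of the proof is to construct, for some sufficiently large $m$, an arc $\ell \subset X$ containing $x$ in its interior and on which $\pi_m|_\ell$ is injective. Maximality of the $m$-basic arc will then force $\ell \subset A_m(x)$, putting $x$ in the interior of $A_m(x)$ and contradicting the $B$-endpoint assumption. For the construction of $\ell$, I would iterate Observation~\ref{obs:compzz} to see that every composition $f_{i+1} \circ \cdots \circ f_j$ is zigzag-free, and use this to produce unique inverse-limit lifts of small sub-intervals around $x_m$: from the $A$-side one obtains a one-sided lift through $x$ toward $p$, and from the $B$-side the opposite lift toward $q$, and these two one-sided lifts meet at $x$ to form $\ell$. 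The main obstacle is executing this lift step rigorously — one must choose $m$ so that the sub-intervals of $I$ around $x_m$ coming from $\pi_m(A)$ and $\pi_m(B)$ admit unique inverse-limit lifts passing through $x$, and verify that the concatenated lifts actually place $x$ in the interior of $\ell$ rather than at an endpoint. This is the step where zigzag-freeness is essential, since Figure~\ref{fig:doublespiral1} shows that without it a $B$-endpoint can sit in the interior of an arc, precisely the pathology the construction rules out.
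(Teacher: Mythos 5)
Your proposal is correct and follows essentially the same route as the paper's proof: argue the contrapositive, project the two incomparable subcontinua $A,B$ to incomparable intervals $A_m=\pi_m(A)$, $B_m=\pi_m(B)$ both containing $x_m$ (so $x_m\in \Int(A_m\cup B_m)$ for all large $m$), and use zigzag-freeness to pull $A_m\cup B_m$ back homeomorphically through the inverse system, yielding an arc through $x$ on which $\pi_m$ is injective and hence a contradiction with the $B$-endpoint hypothesis. The lifting step you flag as the main obstacle closes level by level with no further choice of $m$: since $\min(A_m\cup B_m)\notin B_m$ and $\max(A_m\cup B_m)\notin A_m$, their $f_{m+1}$-preimages inside $A_{m+1}\cup B_{m+1}$ lie in $A_{m+1}\setminus B_{m+1}$ and $B_{m+1}\setminus A_{m+1}$ respectively, hence on opposite sides of $x_{m+1}$, so the innermost such preimages $l<x_{m+1}<r$ bound an interval whose endpoints map to the two extremes of $A_m\cup B_m$ and whose interior avoids both, and zigzag-freeness of $f_{m+1}$ then makes $f_{m+1}|_{[l,r]}$ a homeomorphism onto $A_m\cup B_m$.
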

\begin{proof}
Assume $x$ is not an endpoint, so there are subcontinua $A, B\subset \underleftarrow{\lim}\{I, f_i\}$ such that $x\in A\cap B$ and $A\setminus B, B\setminus A\neq\emptyset$.
Let $A_i=\pi_i(A), B_i=\pi_i(B)$, $i\in\N_0$ be coordinate projections. They are all intervals, $x_i\in A_i\cap B_i$ for every $i$, and there exists $N\in\N$ such that $A_{i}\setminus B_{i}, B_{i}\setminus A_{i}\neq\emptyset$, for all $i>N$.
Since $f_{i+1}$ does not contain a zigzag, there exists an interval $(l_{i+1}, r_{i+1})\ni x_{{i+1}}$ such that $f_{i+1}|_{[l_{i+1}, r_{i+1}]}\colon [l_{i+1}, r_{i+1}]\to A_{i}\cup B_{i}$ is one-to-one and surjective, see Figure~\ref{fig:nzz}.
So we can find an arc 
$A:= [a_N,b_N] \overset{f_{N+1}}{\longleftarrow} [a_{N+1},b_{N+1}] \overset{f_{N+2}}{\longleftarrow}[a_{N+2},b_{N+2}] \overset{f_{N+3}}{\longleftarrow} [a_{N+4}, b_{N+4}]\overset{f_{N+4}}{\longleftarrow}\ldots$
such that $x\in \Int(A)\subset \Int(A_{N}(x))$, so $x$ is not a $B$-endpoint.
\end{proof}

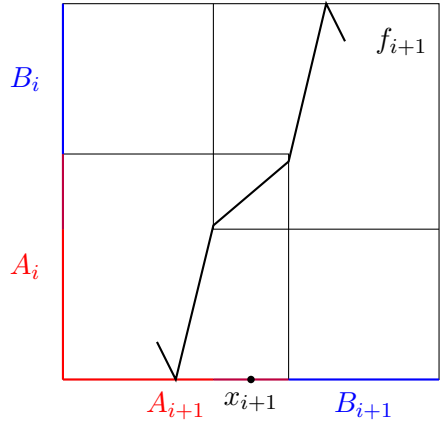
\begin{figure}
	\centering
	\begin{tikzpicture}[scale=5]
	\draw (0,0)--(1,0)--(1,1)--(0,1)--(0,0);
	\draw (0,0)--(0.6,0)--(0.6,0.6)--(0,0.6);
	\draw (1,0.4)--(0.4,0.4)--(0.4,1);
	\draw[thick,red] (0,0)--(0.4,0);
	\node[red] at (0.3,-0.07) {\small $A_{{i+1}}$};
	\draw[thick,purple] (0.4,0)--(0.6,0);
	\draw[thick,blue] (0.6,0)--(1,0);
	\node[blue] at (0.8,-0.07) {\small $B_{{i+1}}$};
	\draw[thick,red] (0,0)--(0,0.4);
	\node[red] at (-0.1,0.3) {\small $A_{{i}}$};
	\draw[thick,purple] (0,0.4)--(0,0.6);
	\draw[thick,blue] (0,0.6)--(0,1);
	\node[blue] at (-0.1,0.8) {\small $B_{{i}}$};
	\node at (0.9,0.9) {\small $f_{i+1}$};
	\node[circle,fill, inner sep=1] at (0.5,0){};
	\node at (0.5,-0.06) {\small $x_{{i+1}}$};
	\draw[thick] (0.25,0.1)--(0.3,0)--(0.4,0.41)--(0.6,0.58)--(0.7,1)--(0.75,0.9);
	\end{tikzpicture}
	\caption{Since $f_{i+1}$ is zigzag-free, for minimal interval $J\subset I$ such that $f_{i+1}(J)=A_i\cup B_i$ it holds that $f_{i+1}|_J$ is one-to-one.}
		\label{fig:nzz}
\end{figure}

The previous theorem can be generalized to the class of long-zigzag maps which are locally eventually onto (leo).

\begin{observation}\label{obs:leo}
	If $f$ is a leo map, and $A\subset \underleftarrow{\lim}\{I, f\}$ is a proper subcontinuum, then $|\pi_i(A)|\to 0$ as $i\to\infty$ (see \eg \cite[Lemma 2]{BrBr}). 
\end{observation}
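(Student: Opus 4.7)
The plan is to argue by contradiction: suppose $|\pi_i(A)| \not\to 0$, and derive that $A$ must be the entire inverse limit space. Set $J_i := \pi_i(A)$; since $A$ is a subcontinuum, each $J_i$ is a closed subinterval of $I$, and the inverse-limit relation forces $f(J_{i+1}) = J_i$ for every $i$. Under the contradictory hypothesis, I can extract $\eps > 0$ and a subsequence $i_k \to \infty$ with $|J_{i_k}| \geq \eps$.

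The first real step is to upgrade the leo condition to a uniform statement: there exists $N \in \N$ such that $f^N(J) = I$ for every closed subinterval $J \subset I$ of length at least $\eps$. To achieve this I would cover $I$ by finitely many relatively open intervals $U_1, \ldots, U_m$, each small enough that any closed interval of length $\geq \eps$ in $I$ contains some $U_j$. By leo, each $U_j$ comes with an integer $N_j$ satisfying $f^{N_j}(U_j) = I$; surjectivity of $f$ then gives $f^n(U_j) = I$ for every $n \geq N_j$, so $N := \max_j N_j$ works. Applying this to $J_{i_k}$ yields $I = f^N(J_{i_k}) = J_{i_k - N}$ for all sufficiently large $k$.

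Next I would propagate this backwards along the bonding maps: since $f$ is surjective, whenever $J_m = I$ we also have $J_{m-1} = f(J_m) = f(I) = I$. Hence $J_m = I$ for every $m \leq i_k - N$, and letting $k \to \infty$ forces $J_m = I$ for every $m \in \N_0$.

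To finish, pick any $p = (p_0, p_1, \ldots) \in \underleftarrow{\lim}\{I,f\}$. The sets $\pi_k^{-1}(V)$, with $V \subset I$ open and $k \in \N_0$, form a basis for the topology, and any such neighborhood of $p$ meets $A$ because $\pi_k(A) = I \supseteq V \neq \emptyset$. Thus $p \in \overline{A} = A$, forcing $A$ to equal the whole continuum and contradicting the assumption that $A$ is proper. I expect the main obstacle to be the uniform-$N$ step: leo is asserted only pointwise, one open interval at a time, and upgrading to a single $N$ that handles every subinterval of length $\geq \eps$ requires the finite-cover argument above together with care near the endpoints of $I$. Once that uniform expansion is in hand, the remainder of the proof is bookkeeping about surjectivity of $f$ and the standard basis for the inverse-limit topology.
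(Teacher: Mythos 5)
Your argument is correct and complete; it is the standard proof of this fact and essentially the one in the cited reference [Lemma 2, BrBr], the paper itself offering no proof beyond that citation. The only step requiring care, the compactness upgrade of leo to a uniform $N$ with $f^N(J)=I$ for all $|J|\ge\eps$, is handled properly by your finite cover together with the surjectivity remark $f^n(U_j)=I$ for all $n\ge N_j$.
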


\begin{theorem}\label{thm:llzzb}
	If $f\colon I\to I$ is long-zigzag leo map, then every $B$-endpoint in $\underleftarrow{\lim}\{I, f\}$ is an endpoint.
	In particular, the sets of $B$-endpoints, $L$-endpoints, and endpoints in $\underleftarrow{\lim}\{I,f\}$ are the same. 
\end{theorem}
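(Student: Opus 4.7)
The containments $\{\text{endpoints}\}\subseteq\{L\text{-endpoints}\}\subseteq\{B\text{-endpoints}\}$ are immediate from the definitions (any subarc of an arc $\alpha$ containing an endpoint of $\alpha$ has that point as an endpoint, and each $A_i(x)$ is an arc through $x$). So the plan is to prove the converse $\{B\text{-endpoints}\}\subseteq\{\text{endpoints}\}$ by contrapositive, mirroring the proof of Lemma~\ref{lem:endpts} but replacing the single-step zigzag-free argument there with an application of Definition~\ref{def:longzigzag} to a suitably large iterate $f^n$, where the long-zigzag threshold is reached using the leo hypothesis.

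Suppose $x=(x_0,x_1,\ldots)\in X:=\underleftarrow{\lim}\{I,f\}$ is not an endpoint, and choose proper subcontinua $A,B\subset X$ with $x\in A\cap B$, $A\not\subseteq B$, and $B\not\subseteq A$. Set $J_i:=\pi_i(A\cup B)=\pi_i(A)\cup\pi_i(B)$, a closed interval containing $x_i$ with $f(J_{i+1})=J_i$. Two preparatory facts are needed. First, by Observation~\ref{obs:leo}, $|\pi_i(A)|,|\pi_i(B)|\to 0$, hence $|J_i|\to 0$. Second, for all sufficiently large $i$, the point $x_i$ lies in $\Int J_i$: if, to the contrary, $\pi_i(A)\subseteq\pi_i(B)$ held for infinitely many $i$, then for every $y\in A$ one finds $z_i\in B$ with $\pi_i(z_i)=\pi_i(y)$, and a convergent subsequence of the $z_i$ together with continuity of the projections forces $y\in B$; hence $A\subseteq B$, a contradiction. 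Symmetrically the reverse inclusion is ruled out, so for large $i$ both $\pi_i(A)\not\subseteq\pi_i(B)$ and $\pi_i(B)\not\subseteq\pi_i(A)$; a short case check on the endpoints of the intervals $\pi_i(A),\pi_i(B)$ then places $x_i$ strictly between the extremes of $J_i$. Let $\eps>0$ be the long-zigzag constant of $f$, pick $N$ with $|J_N|<\eps$ and $x_N\in\Int J_N$, and set $[a_N,b_N]:=J_N$.

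For each $n\geq 1$, apply Zorn's lemma to the family of closed intervals $K\ni x_{N+n}$ with $f^n(K)\supseteq[a_N,b_N]$, ordered by reverse inclusion (descending chains preserve the covering property by sequential compactness: preimages of each $y\in[a_N,b_N]$ in the chain members have a subsequential limit inside the intersection). The resulting minimal interval $[a_{N+n},b_{N+n}]$ satisfies $\{f^n(a_{N+n}),f^n(b_{N+n})\}=\{a_N,b_N\}$ and $f^n((a_{N+n},b_{N+n}))\cap\{a_N,b_N\}=\emptyset$: if both endpoints of $[a_{N+n},b_{N+n}]$ mapped to the same extreme of $[a_N,b_N]$, splitting at an interior preimage of the other extreme would produce a strictly smaller covering interval still containing $x_{N+n}$, and an analogous shrinking argument rules out interior hits of $\{a_N,b_N\}$. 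By continuity, these two properties force $f^n((a_{N+n},b_{N+n}))\subset(a_N,b_N)$, so $f^n([a_{N+n},b_{N+n}])=[a_N,b_N]$ exactly. Since $|a_N-b_N|<\eps$, Definition~\ref{def:longzigzag} yields that $f^n|_{[a_{N+n},b_{N+n}]}$ is a homeomorphism; in particular $f$ is monotone on each intermediate interval $f^{n-k}([a_{N+n},b_{N+n}])$ for $0\leq k\leq n$.

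A Hausdorff-metric diagonal extraction as $n\to\infty$ produces compatible intervals $[a_{N+k},b_{N+k}]$ for all $k\geq 0$ on which $f\colon[a_{N+k+1},b_{N+k+1}]\to[a_{N+k},b_{N+k}]$ is a monotone surjection, all nondegenerate because $[a_N,b_N]$ is. As $x_N\in\Int[a_N,b_N]$ and each $f^k|_{[a_{N+k},b_{N+k}]}$ is a homeomorphism onto $[a_N,b_N]$, we have $x_{N+k}\in\Int[a_{N+k},b_{N+k}]$ for every $k$. Extending to lower levels by $[a_i,b_i]:=f^{N-i}([a_N,b_N])$ for $i<N$, the inverse-limit set $\alpha:=\{y\in X:y_i\in[a_i,b_i]\text{ for every }i\in\N_0\}$ is an arc on which $\pi_N$ is a homeomorphism onto $J_N$ and contains $x$ in its interior, so $\alpha\subseteq A_N(x)$ and $x\in\Int A_N(x)$, contradicting that $x$ is a $B$-endpoint. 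The main obstacle is the minimality case analysis in the third paragraph: one must simultaneously rule out both the ``overshooting'' and the ``doubled-extreme'' configurations to land in the exact setting where Definition~\ref{def:longzigzag} applies; a secondary but essential subtlety is the interior claim $x_i\in\Int J_i$, without which the constructed arc would merely place $x$ on its boundary and would not contradict the $B$-endpoint hypothesis.
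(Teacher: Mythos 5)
Your reduction to the projections $J_i=\pi_i(A)\cup\pi_i(B)$, the use of Observation~\ref{obs:leo}, and the careful verification that $x_i\in\Int J_i$ eventually are all correct and match the paper's strategy. The gap is in the third paragraph, in the claim that a Zorn-minimal interval $K=[a_{N+n},b_{N+n}]\ni x_{N+n}$ with $f^n(K)\supseteq[a_N,b_N]$ automatically satisfies $\{f^n(a_{N+n}),f^n(b_{N+n})\}=\{a_N,b_N\}$ with no interior preimages of $\{a_N,b_N\}$. Minimality only forbids \emph{proper subintervals of $K$ containing $x_{N+n}$} from covering; so if $[a_N,b_N]$ can be covered entirely from one side of $x_{N+n}$ (which can happen, e.g.\ when just outside $J_{N+n}$ there are $f^n$-preimages of both extremes), the minimal element returned by Zorn can be \emph{pinned} at $x_{N+n}$, i.e.\ have $x_{N+n}$ as an endpoint with $f^n(x_{N+n})=x_N$ an interior point of $[a_N,b_N]$, and with the opposite extreme attained at an interior point of $K$. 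Your ``analogous shrinking argument'' for interior hits breaks precisely here: splitting $K$ at an interior preimage of an extreme produces two pieces each containing that extreme in its image, and the piece that still covers $[a_N,b_N]$ need not be the piece containing $x_{N+n}$, so no contradiction with minimality arises. In this pinned configuration the hypotheses of Definition~\ref{def:longzigzag} simply fail for $K$, and you cannot conclude monotonicity of $f^n|_K$.

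The missing ingredient is to use the mutual non-containment of $\pi_{N+n}(A)$ and $\pi_{N+n}(B)$ at level $N+n$, not only at level $N$. Since $a_N\notin\pi_N(B)=f^n(\pi_{N+n}(B))$ and $b_N\notin\pi_N(A)=f^n(\pi_{N+n}(A))$, every $f^n$-preimage of $a_N$ inside $J_{N+n}$ lies in $\pi_{N+n}(A)\setminus\pi_{N+n}(B)$ and every $f^n$-preimage of $b_N$ inside $J_{N+n}$ lies in $\pi_{N+n}(B)\setminus\pi_{N+n}(A)$; these two sets sit on opposite sides of $x_{N+n}\in\pi_{N+n}(A)\cap\pi_{N+n}(B)$. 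Taking $u$ and $v$ to be the preimages of $a_N$ and $b_N$ in $J_{N+n}$ closest to $x_{N+n}$ (one on each side), the interval $[u,v]$ contains $x_{N+n}$ in its interior, satisfies $f^n([u,v])=[f^n(u),f^n(v)]=J_N$ with no interior preimages of the endpoints, and has image of length $<\eps$, so Definition~\ref{def:longzigzag} applies. With this replacement (and your diagonal extraction, which is fine), the rest of your argument goes through and coincides in spirit with the paper's proof.
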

\begin{proof}
	The proof follows as the proof of Lemma~\ref{lem:endpts}. We only need to find $N\in\N$ such that $|A_i\cup B_i|<\eps$ for all $i>N$. However, this is always possible by Observation~\ref{obs:leo}, since $f$ is leo.
\end{proof}

\begin{remark}
	There exist long-zigzag maps (not leo) for which some $B$-endpoints are not endpoints, see \eg Figure~\ref{fig:two}. However, every such $B$-endpoint must be an endpoint of its non-degenerate $0$-basic arc, as we show in the next proposition. We define double spiral points first and then show that in the inverse limits of long-zigzag maps they do not exist.  
\end{remark}

\begin{definition}\label{def:doublespiral}
 $B$-endpoint $x\in \underleftarrow{\lim}\{I, f_i\}$ which is not an $L$-endpoint is called a {\em double spiral point}. 
\end{definition}

\begin{proposition}\label{prop:doublespiral}
	If $f\colon I\to I$ is long-zigzag, then $\underleftarrow{\lim}\{I, f\}$ does not contain double spiral points. 
\end{proposition}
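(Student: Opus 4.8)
The plan is to argue by contradiction: assume $x=(x_0,x_1,\dots)$ is a double spiral point, so that $x$ is a $B$-endpoint lying in the \emph{interior} of some arc $A\subset\underleftarrow{\lim}\{I,f\}$, and extract from this configuration a genuine zigzag of some $f^n$ of arbitrarily small diameter, contradicting Definition~\ref{def:longzigzag}. Write $A=A^-\cup A^+$ for the two sub-arcs into which $x$ splits $A$, and for a sub-arc $B\subset A$ abbreviate $\gamma_i:=\pi_i|_B$. First I would record what being a $B$-endpoint gives locally. Since $\pi_i|_C$ one-to-one forces $\pi_{i+1}|_C$ one-to-one for any arc $C$ (because $\pi_i=f\circ\pi_{i+1}$), the basic arcs are nested, $A_i(x)\subseteq A_{i+1}(x)$. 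Hence the maximal sub-arc $B_i\subseteq A$ through $x$ on which $\pi_i$ is one-to-one satisfies $B_i\subseteq A_i(x)$, and as $x$ is an endpoint of $A_i(x)$ it is an endpoint of $B_i$. Thus for every $i$ the map $\pi_i$ is one-to-one on no two-sided neighbourhood of $x$ in $A$; equivalently $x_i$ is a local extreme value of $\pi_i|_A$, and $\pi_i(A^-),\pi_i(A^+)$ lie (locally) on the same side of $x_i$.

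The technical heart is a ``no clean fold'' step, which I would establish \emph{without} the long-zigzag hypothesis. Choose a small symmetric sub-arc $A'=[p,q]$ with $x\in(p,q)$ and $\pi_0(p)=\pi_0(q)$. I claim it is impossible that $\pi_n$ be monotone on each of $A'\cap A^-$ and $A'\cap A^+$ for every $n$. Indeed, monotonicity of $\pi_0$ on each side forces, via $\pi_0=f^n\circ\pi_n$, monotonicity of $\pi_n$ on each side and of $f^n$ on each image interval $J_n^{\pm}=\pi_n(A'\cap A^{\pm})$ onto the common interval $\pi_0(A')=[x_0,\pi_0(p)]$, with $f^n(x_n)=x_0$. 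Since $x_n$ is an endpoint of both $J_n^{\pm}$ and, by the previous paragraph, both lie on the same side of $x_n$, writing $J_n^{\pm}=[x_n,x_n+\rho_n^{\pm}]$ and comparing the (increasing) homeomorphism $f^n$ on the nested pair shows $\rho_n^+=\rho_n^-$ for all $n$; but then $\pi_n(p)=\pi_n(q)$ for every $n$, so $p=q$, contradicting that $p,q$ are distinct endpoints of the arc $A'$. Consequently, applying this to ever smaller $A'$, the projections $\pi_n|_A$ must fold at interior points $x^{(j)}\to x$: the arc develops folds at arbitrarily small scales accumulating at $x$.

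The final step converts these small-scale folds into genuine zigzags. Fix $\eps>0$ as in the long-zigzag property and choose $A'$ so small that $\diam\,\pi_0(A')<\eps$. For a suitable coordinate $n$ I would select a sub-arc $A''\subset A'$ lying between two consecutive folds of $\pi_n|_{A'}$, so that $\pi_n|_{A''}$ is monotone while $\pi_0=f^n\circ\pi_n$ is non-monotone on $A''$. Then $\pi_n|_{A''}$ is a homeomorphism onto $K:=\pi_n(A'')$, the oscillation of $\pi_0|_{A''}$ transfers verbatim to $f^n|_K$, and $f^n(K)=\pi_0(A'')\subset\pi_0(A')$ has diameter $<\eps$. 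Trimming $K$ to an interval whose endpoints realise the two extreme values exhibits a genuine zigzag of $f^n$ of diameter $<\eps$, contradicting that $f$ is long-zigzag; the same conclusion then rules out double spiral points.

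The step I expect to be the main obstacle is this last conversion: guaranteeing that the forced small-scale folds assemble into a \emph{genuine} zigzag — endpoints at the extreme values, interior strictly in between — rather than into a nested, monotonically damped family of simple folds, which the long-zigzag condition tolerates. Making this quantitative, i.e.\ choosing $n$ and $A''$ so that $f^n|_K$ genuinely overshoots while its image stays within $\eps$, is precisely where the long-zigzag hypothesis is used essentially, and is the part that will require the most care (including handling the local-minimum versus local-maximum cases and the possibility that only one side of $x$ fails to be clean).
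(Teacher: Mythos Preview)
Your approach is quite different from the paper's, and the ``conversion'' step you yourself flag as the crux is a genuine gap that does not close.

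The decisive problem is in your last paragraph. Even if your ``no clean fold'' step goes through, what you extract is a sub-arc $A''$ on which $\pi_n$ is one-to-one while $f^n|_K$ (with $K=\pi_n(A'')$) is non-monotone and has image of diameter $<\eps$. But a non-monotone branch with small image is typically a \emph{simple fold}, and the long-zigzag condition says nothing about simple folds: Definition~\ref{def:longzigzag} only forbids non-monotone $f^n|_{[a,b]}$ when the endpoints $f^n(a),f^n(b)$ are the two extreme values of the image and the interior stays strictly between them. A single turning point never has this form, and an infinite nested family of damped simple folds --- precisely the mechanism behind the double spiral of Figure~\ref{fig:doublespiral1} --- can persist through all iterates without ever producing such an $[a,b]$. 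So ``trimming $K$ to an interval whose endpoints realise the two extreme values'' will in general leave $f^n$ monotone on that trimmed interval, and no contradiction follows. There is also a subsidiary error earlier: your claimed equivalence ``$\pi_i$ is one-to-one on no two-sided neighbourhood $\Leftrightarrow$ $x_i$ is a local extreme value of $\pi_i|_A$'' is false --- for genuine double spirals every $\pi_i$ oscillates on \emph{both} one-sided neighbourhoods of $x$ and $x_i$ is not a local extremum --- which undermines the one-sided monotonicity you rely on in setting up $A'$ and in the $\rho_n^+=\rho_n^-$ argument.

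The paper's proof avoids all of this by using the long-zigzag hypothesis \emph{constructively} rather than by contradiction. It writes the arc through $x$ as $A\cup B$ with $A\cap B=\{x\}$, shrinks it so that the level-$0$ projection has diameter $<\eps$, and then, for each $i$, applies the long-zigzag property directly to $f^i$ with this small target: this yields a sub-interval $U_i\ni x_i$ on which $f^i$ is a homeomorphism onto the fixed small interval. Exactly as in the proof of Lemma~\ref{lem:endpts}, these $U_i$ assemble into a non-degenerate $0$-basic arc with $x$ in its interior, contradicting the $B$-endpoint hypothesis outright. No analysis of the fold structure of the projections is needed, and one never has to manufacture a small zigzag.
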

\begin{proof}
	Denote by $\eps>0$ the constant from the definition of a long-zigzag map. Assume that $x=(x_0, x_1, \ldots)\in \underleftarrow{\lim}\{I, f\}$ is a $B$-endpoint which is contained in the interior of a non-degenerate arc. That means that there exist non-degenerate arcs $A, B\subset \underleftarrow{\lim}\{I, f\}$ such that $\{x\}=A\cap B$. Denote by $A_i=\pi_i(A)$ and $B_i=\pi_i(B)$ for $i\geq 0$ and note that every $A_i, B_i$ are intervals in $I$, $x_i\in A_i\cap B_i$, and $f|_{A_i}, f|_{B_i}$ are surjective. Furthermore, since $A\cup B$ is an arc, we can take without loss of generality $|A_0\cup B_0|<\eps$. 
	
	Since $f$ is long-zigzag, for every $i\in\N$ we can find an interval $x_i\in U_i\subset A_i\cap B_i$ which is mapped homeomorphically onto $A_0\cap B_0$. 
	As in the proof of Lemma~\ref{lem:endpts} we conclude that $A_0(x)$ is non-degenerate, and $x\in \Int A_0(x)$, which is a contradiction.
\end{proof}

Note that for double spirals in Figure~\ref{fig:doublespiral1} and \ref{fig:one}, there exists a homeomorphic continuum with a different inverse limit representation $\underleftarrow{\lim}\{I,f\}$ for which there are no double spirals. Thus we ask the following question:

\begin{question}\label{question1}
	Given $X=\underleftarrow{\lim}\{I, f\}$, is it always possible to find continuous maps $g_i\colon I\to I$ such that $X$ is homeomorphic to $\underleftarrow{\lim}\{I, g_i\}$ and such that every $B$-endpoint in $\underleftarrow{\lim}\{I, g_i\}$ is an $L$-endpoint (that is, can we find a representation of $X$ in which there are no double spirals)? Can we do this requiring $g_i=g$ for every $i\in \N$?
\end{question}

\begin{example}\label{ex:leo+doublespiral}
	There exists a leo interval map $f$ (with infinitely many critical points) so that $\underleftarrow{\lim}(I,f)$ has a double spiral, see Figure~\ref{fig:doublespiralleo}.
	Namely, the intervals $[a_i',b_i']\subset I$ are such that $f([a_{i+1}',b_{i+1}'])\subset[a_i',b_i']$ for every $i\in\N_0$, so $A:=[a'_0,b'_0] \overset{f}{\longleftarrow} [a'_1,b'_1] \overset{f}{\longleftarrow}[a'_{2},b'_{2}]\overset{f}{\longleftarrow}\ldots$ is a well-defined subcontinuum of $\underleftarrow{\lim}\{I,f\}$. The map $f$ is constructed such that $A$ is a double spiral obtained in the similar way as in Figure~\ref{fig:doublespiral1}. There are intervals $[a_i,b_i]\subset [a_i',b_i']$ for every $i\in\N_0$ for which $f|_{[a_i,b_i]}$ is one-to-one for every $i\in\N_0$,  $f{[a_{i+1},b_{i+1}]}\subset [a_i,b_i]$, and $\diam f^{i-1}([a_i,b_i])\to 0$ as $i\to \infty$. It is not difficult to see that $x=(x_0,x_1,x_2,\ldots)\in A$ for which $x_i\in[a_i,b_i]$ for every $i\in\N_0$ is a double spiral point.
	Moreover, we set $f([a'_0,b'_0])=I$. Then for every interval $J\subset I$ such that $[a_n',b_n']\subset J$ it holds that $f^n(J)=I$. We can set $f|_{[a_i',b_i']}$ to be of slope $3$ or more, and $f|_{[b_i',a_{i-1}']}$ to be of slope greater than $1$ for every $i\geq 1$. Therefore, if $J$ does not contain any $[a_n',b_n']$, then the diameter of $f^k(J)$ increases as $k$ increases. We only need to find $k\in\N$ for which $f^k(J)$ contains some $[a_n',b_n']$ and conclude that $f^{k+n}(J)=I$.
\end{example}

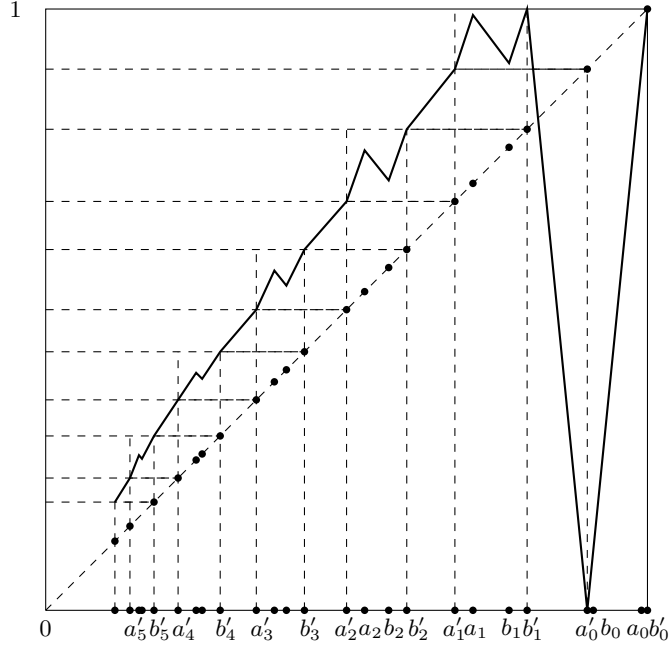
\begin{figure}
	\begin{tikzpicture}[scale=8]
	\draw[thick] (1,1)--(9/10,0)--(8/10,1)--(0.77,0.91)--(0.71,0.99)--(0.68,0.9)--(0.6,0.8)--(0.57,0.715)--(0.53,0.765)--(0.5,0.68)--(0.43,0.6)--(0.4,0.54)--(0.38,0.565)--(0.35,0.5)--(0.29,0.43)--(0.26,0.385)--(0.25,0.395)--(0.22,0.35)--(0.18,0.29)--(0.16,0.252)--(0.155,0.258)--(0.14,0.22)--(0.115,0.18);
	\draw[dashed] (0.9,0.9)--(0.8,0.9);
	\draw[dashed](0.14,0.14)--(0.14,0.22)--
	(0.22,0.22)--(0.22,0.35)--(0.35,0.35)--(0.35,0.5)--(0.5,0.5)--(0.5,0.68)--(0.68,0.68)--(0.68,0.9)--(0.9,0.9)--(0.9,0);
	\draw[dashed](0.115,0.115)--
	(0.115,0.18)--(0.18,0.18)--(0.18,0.29)--(0.29,0.29)--(0.29,0.43)--(0.43,0.43)--(0.43,0.6)--(0.6,0.6)--(0.6,0.8)--(0.8,0.8)--(0.8,1);
	\draw[dashed] (0,0)--(1,1);
	\draw (0,0) -- (0,1) -- (1,1) -- (1,0) -- (0,0);
	\draw[dashed] (0.8,0)--(0.8,0.8); 
	\draw[dashed] (0.68,0)--(0.68,0.68); 
	\draw[dashed] (0.6,0)--(0.6,0.6); 
	\draw[dashed] (0.5,0)--(0.5,0.5); 
	\draw[dashed] (0.43,0)--(0.43,0.43); 
	\draw[dashed] (0.35,0)--(0.35,0.35); 
	\draw[dashed] (0.29,0)--(0.29,0.29); 
	\draw[dashed] (0.22,0)--(0.22,0.22); 
	\draw[dashed] (0.18,0)--(0.18,0.18); 
	\draw[dashed] (0.14,0)--(0.14,0.14); 
	\draw[dashed] (0.115,0)--(0.115,0.115);
	
	
	\draw[dashed] (0,0.9)--(0.9,0.9); 
	\draw[dashed] (0,0.8)--(0.8,0.8); 
	\draw[dashed] (0,0.68)--(0.68,0.68); 
	\draw[dashed] (0,0.6)--(0.6,0.6); 
	\draw[dashed] (0,0.5)--(0.5,0.5); 
	\draw[dashed] (0,0.43)--(0.43,0.43); 
	\draw[dashed] (0,0.35)--(0.35,0.35); 
	\draw[dashed] (0,0.29)--(0.29,0.29); 
	\draw[dashed] (0,0.22)--(0.22,0.22); 
	\draw[dashed] (0,0.18)--(0.18,0.18); 

	\draw[dashed] (0.68,0.9)--(0.68,1); 
	\draw[dashed] (0.5,0.68)--(0.5,0.8); 
	\draw[dashed] (0.35,0.5)--(0.35,0.6); 
	\draw[dashed] (0.22,0.35)--(0.22,0.43); 
	\draw[dashed] (0.14,0.22)--(0.14,0.29);
	 
	\node[circle,fill, inner sep=1] at (1,1){};
	\node[circle,fill, inner sep=1] at (0.9,0.9){};
	\node[circle,fill, inner sep=1] at (0.8,0.8){};
	\node[circle,fill, inner sep=1] at (0.77,0.77){};
	\node[circle,fill, inner sep=1] at (0.71,0.71){};
	\node[circle,fill, inner sep=1] at (0.68,0.68){};
	\node[circle,fill, inner sep=1] at (0.6,0.6){};
	\node[circle,fill, inner sep=1] at (0.57,0.57){};
	\node[circle,fill, inner sep=1] at (0.53,0.53){};
	\node[circle,fill, inner sep=1] at (0.5,0.5){};
	\node[circle,fill, inner sep=1] at (0.43,0.43){};
	\node[circle,fill, inner sep=1] at (0.4,0.4){};
	\node[circle,fill, inner sep=1] at (0.38,0.38){};
	\node[circle,fill, inner sep=1] at (0.35,0.35){};
	\node[circle,fill, inner sep=1] at (0.29,0.29){};
	\node[circle,fill, inner sep=1] at (0.26,0.26){};
	\node[circle,fill, inner sep=1] at (0.25,0.25){};
	\node[circle,fill, inner sep=1] at (0.22,0.22){};
	\node[circle,fill, inner sep=1] at (0.18,0.18){};
	\node[circle,fill, inner sep=1] at (0.14,0.14){};
	\node[circle,fill, inner sep=1] at (0.115,0.115){};
	\node[circle,fill, inner sep=1] at (1,0){};
	\node[circle,fill, inner sep=1] at (0.99,0){};
	\node[circle,fill, inner sep=1] at (0.91,0){};
	\node[circle,fill, inner sep=1] at (0.9,0){};
	\node[circle,fill, inner sep=1] at (0.8,0){};
	\node[circle,fill, inner sep=1] at (0.77,0){};
	\node[circle,fill, inner sep=1] at (0.71,0){};
	\node[circle,fill, inner sep=1] at (0.68,0){};
	\node[circle,fill, inner sep=1] at (0.6,0){};
	\node[circle,fill, inner sep=1] at (0.5,0){};
	\node[circle,fill, inner sep=1] at (0.43,0){};
	\node[circle,fill, inner sep=1] at (0.35,0){};
	\node[circle,fill, inner sep=1] at (0.29,0.){};
	\node[circle,fill, inner sep=1] at (0.22,0){};
	\node[circle,fill, inner sep=1] at (0.18,0){};
	\node[circle,fill, inner sep=1] at (0.14,0){};
	\node[circle,fill, inner sep=1] at (0.115,0){};
	\node[circle,fill, inner sep=1] at (0.57,0){};
	\node[circle,fill, inner sep=1] at (0.53,0){};
	\node[circle,fill, inner sep=1] at (0.4,0){};
	\node[circle,fill, inner sep=1] at (0.38,0.){};
	\node[circle,fill, inner sep=1] at (0.25,0){};
	\node[circle,fill, inner sep=1] at (0.26,0){};
	\node[circle,fill, inner sep=1] at (0.16,0){};
	\node[circle,fill, inner sep=1] at (0.155,0){};
	\node at (-0.05,1) {\scriptsize $1$};
	\node at (0,-0.03) {\scriptsize $0$};
	\node at (0.15,-0.03) {\scriptsize $a'_5$};
	\node at (0.19,-0.03) {\scriptsize $b'_5$};
	\node at (0.23,-0.03) {\scriptsize $a'_{4}$};
	\node at (0.3,-0.03) {\scriptsize $b'_4$};
	\node at (0.36,-0.03) {\scriptsize $a'_3$};
	\node at (0.44,-0.03) {\scriptsize $b'_{3}$};
	\node at (0.5,-0.03) {\scriptsize $a'_2$};
	\node at (0.54,-0.035) {\scriptsize $a_{2}$};
	\node at (0.58,-0.03) {\scriptsize $b_{2}$};
	\node at (0.62,-0.03) {\scriptsize $b'_{2}$};
	\node at (0.68,-0.03) {\scriptsize $a'_{1}$};
	\node at (0.715,-0.035) {\scriptsize $a_{1}$};
	\node at (0.775,-0.03) {\scriptsize $b_{1}$};
	\node at (0.81,-0.03) {\scriptsize $b'_{1}$};
	\node at (0.9,-0.03) {\scriptsize $a'_{0}$};
	\node at (0.94,-0.03) {\scriptsize $b_{0}$};
	\node at (0.985,-0.035) {\scriptsize $a_{0}$};
	\node at (1.02,-0.03) {\scriptsize $b'_{0}$};
	\end{tikzpicture}
	\caption{Example of an interval leo map $f$ for which the inverse limit has double spirals.}
	\label{fig:doublespiralleo}
\end{figure}

\begin{remark}
	The characterization of $L$-endpoints or double spirals in  $\underleftarrow{\lim}\{I,f_i\}$ is tightly connected with the characterization of $\underleftarrow{\lim}\{I,f_i\}$ which are arcs. Such a characterization is in general tedious and technical, but in 
	Appendix~\ref{appB} we give an easily accessible and applicable characterization of arcs  $\underleftarrow{\lim}\{I,f\}$ for piecewise monotone bonding maps $f$ which is easy to describe in terms of the dynamics of the bonding map.
\end{remark}

\section{Folding points}\label{sec:fp}

We say that a point $x\in\underleftarrow{\lim}\{I, f_i\}$ is a {\em folding point} if it does not have a neighbourhood homeomorphic to $S\times (0,1)$, where $S$ is a zero-dimensional set.

Raines proved in \cite[Corollary 4.8]{Raines} that, if $f\colon I\to I$ satisfies properties (i)+(ii) (see below), then $x\in\underleftarrow{\lim}\{I, f\}$ is a folding point if and only if $\pi_n(x)\in\omega(C)$ for every $n\in\N_0$.

\begin{enumerate}
	\item[(i)] $f$ is piecewise monotone,
	\item[(ii)] If $J\subset I$ is an interval, then $|A|\leq |J|$ for every component $A$ of $f^{-1}(J)$.
\end{enumerate}

By Corollary~\ref{cor:Raines}, every $f$ which satisfies (i)+(ii) is long-zigzag.
We will generalize Raines' result to the class of long-zigzag maps. That is indeed a larger class, since  long-zigzag maps can have infinitely many critical points, or not satisfy property (ii). The proof crucially uses the fact that for long-zigzag map $f$ it holds that $\underleftarrow{\lim}\{I, f\}$ contains no double spirals.

The characterization of folding points obtained here will be used later in the proof of Theorem~\ref{thm:FisE}, with the assumption that $f$ is either zigzag-free or long-zigzag and locally eventually onto. In this setting we know that then point in the inverse limit is an endpoint if and only if it is an $L$-endpoint by Theorem~\ref{thm:llzzb}. 

\begin{theorem}\label{thm:folding}
	Assume $f\colon I\to I$ is a long-zigzag map. Then $x\in\underleftarrow{\lim}\{I, f\}$ is a folding point if and only if $\pi_n(x)\in\omega(C)$ for every $n\in\N_0$.
\end{theorem}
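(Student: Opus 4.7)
The plan is to prove both directions separately, using the long-zigzag constant $\eps > 0$ throughout and invoking Proposition~\ref{prop:doublespiral} (no double spirals in inverse limits of long-zigzag maps) as the crucial input in the reverse direction. The recurring structural observation used in both directions is: if $|U| < \eps$ and $V = (a,b)$ is a component of $f^{-k}(U)$, then either $f^k(a) = f^k(b)$ (and $V$ contains a critical point of $f^k$), or $f^k(a), f^k(b)$ are the two distinct endpoints of $U$, in which case $f^k([a,b]) = \overline U$ has length $|U| < \eps$, $f^k(\xi) \notin \{f^k(a), f^k(b)\}$ for $\xi \in (a,b)$, and the long-zigzag hypothesis forces $f^k|_{[a,b]}$ to be a homeomorphism onto $\overline U$.

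For $(\Rightarrow)$ I will argue the contrapositive: if $\pi_n(x) \notin \omega(C)$ for some $n$, then $x$ is not a folding point. Choose an open interval $U \ni \pi_n(x)$ with $|U| < \eps$ and $\overline U \cap \omega(C) = \emptyset$. By the structural observation, every non-monotone component $V$ of $f^{-k}(U)$ must contain a critical point of $f^k$, which in turn forces some critical value $f^m(c) \in U$ with $c \in C$, $m \ge 1$. Because $\overline U \cap \omega(C) = \emptyset$, each forward critical orbit visits $U$ only finitely often, so $\orb(f(C)) \cap U$ is a discrete set. After shrinking $U$ (and, if necessary, passing to a higher coordinate $n' \ge n$ with $\pi_{n'}(x) \notin \orb(f(C))$), we arrange $U \cap \orb(f(C)) = \emptyset$. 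Then every component of every $f^{-k}(U)$ is mapped homeomorphically onto $U$ by $f^k$, and $\pi_n^{-1}(U)$ is naturally identified with the product of $U$ and the zero-dimensional space of compatible component chains, exhibiting $x$ as a non-folding point.

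For $(\Leftarrow)$, assume $\pi_m(x) \in \omega(C)$ for every $m \ge 0$ and suppose toward contradiction that $x$ has a neighborhood $N \cong S \times (0,1)$ with $S$ zero-dimensional. Then $x$ lies in the interior of an arc, so is not an $L$-endpoint; by Proposition~\ref{prop:doublespiral}, $x$ is not a $B$-endpoint either, and hence $x \in \Int A_n(x)$ for some $n$. Pick such $n$ and a small open interval $U \ni \pi_n(x)$ with $|U| < \eps$ and $\pi_n(A_n(x)) \supseteq U$; by maximality of the basic arc, the component $V_k$ of $f^{-k}(U)$ containing $\pi_{n+k}(x)$ is mapped homeomorphically onto $U$ by $f^k$ for every $k \ge 0$. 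Now use $\pi_n(x) \in \omega(C)$ to pick $c \in C$ and $m_j \to \infty$ with $f^{m_j}(c) \to \pi_n(x)$. For large $j$ the component $W_j$ of $f^{-m_j}(U)$ containing $c$ is non-monotone under $f^{m_j}$, hence by the structural observation satisfies $f^{m_j}(a_j) = f^{m_j}(b_j)$ and $f^{m_j}(W_j)$ is a proper subinterval of $U$ accumulating at $\pi_n(x)$. These folded components produce arc components of $\pi_n^{-1}(U)$ that accumulate on $A_n(x)$ with strictly shorter $\pi_n$-projections attached to a single endpoint of $U$ — a configuration incompatible with the product $S \times (0,1)$, whose arc components are pairwise disjoint parallel strands sharing the same projection onto the $(0,1)$-factor. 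This contradiction shows $x$ is a folding point.

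The main obstacle is in the $(\Rightarrow)$ direction when $\pi_n(x) \in \orb(f(C)) \setminus \omega(C)$, an "isolated" critical orbit point: every neighborhood of $\pi_n(x)$ then intersects $\orb(f(C))$, and one must ascend to a higher coordinate to escape $\orb(f(C))$. The existence of a suitable $n'$ requires showing that the backward sequence $(\pi_m(x))_{m \ge n}$ cannot remain forever inside the countable set $\orb(f(C))$ without accumulating at $\omega(C)$, which would contradict forward-invariance of $\omega(C)$ applied to the assumption $\pi_n(x) \notin \omega(C)$. A secondary technical point in $(\Leftarrow)$ is promoting the heuristic "folds accumulating on $A_n(x)$" to a rigorous failure of the product structure; this is where the non-existence of double spirals (Proposition~\ref{prop:doublespiral}) is indispensable, since without it the folded arcs could potentially be absorbed as compatible parallel strands of a product.
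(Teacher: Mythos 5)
Your proposal follows essentially the same route as the paper's proof: the forward direction builds the product structure over a neighbourhood of $\pi_n(x)$ disjoint from the forward critical orbit (and, like the paper, does not actually use the long-zigzag hypothesis there), while the reverse direction invokes Proposition~\ref{prop:doublespiral} to place $x$ in the interior of some basic arc and then uses the long-zigzag dichotomy on short pullback components to turn recurrence of the critical orbit into folds attached to a single endpoint of $U$ that accumulate on that arc, contradicting the product structure. The only notable difference is that you explicitly flag and sketch a repair for the edge case $\pi_n(x)\in\orb(f(C))\setminus\omega(C)$, which the paper dispatches with ``argue analogously''; this is a refinement of the same argument rather than a divergence of method.
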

\begin{proof}
	Denote by $\eps>0$ the constant from the definition of long-zigzag map, \ie for every $n\in\N_0$ and minimal $[a, b]\subset I$ such that $f^n([a, b])=[f^n(a), f^n(b)]$ (or $f^n([a, b])=[f^n(b), f^n(a)]$) and $|f^n(a)-f^n(b)|<\eps$ it holds that $f^n|_{[a,b]}$ is one-to-one. Denote by $x=(x_0, x_1, \ldots)\in \underleftarrow{\lim}\{I, f\}$.
	
	($\Rightarrow$) Assume first that there exists $\delta>0$ such that $(x_0-\delta, x_0+\delta)\cap\{f^n(C):n\in\N_0\}=\emptyset$ and study $0$-box $U:=\pi^{-1}_0((x_0-\delta, x_0+\delta))$. Let $A\subset U$ be a maximal connected set. Then we can write $A=\underleftarrow{\lim}\{A_i, f|_{A_i}\}$, where $A_i\subset I$ is a maximal connected set such that $f(A_i)=A_{i-1}$, for every $i\geq 1$, and $A_0=(x_0-\delta, x_0+\delta)$. Note that a priori $f|_{A_i}$ does not need to be a surjection onto $A_{i-1}$. However, note that $A_i$ does not contain a critical point $c\in C$, since otherwise $f^i(c)\in A_0$, which contradicts the assumption. We conclude that $\pi_0|_A$ is one-to-one onto $(x_0-\delta,x_0+\delta)$. Thus, $U=(x_0-\delta,x_0+\delta)\times S$, where $S=\pi_0^{-1}(x_0)\subset \underleftarrow{\lim}\{I, f\}$ is one-dimensional (otherwise $x_0\in f^n(C)$). By applying $\widehat{f}^n$ to $x$ we can argue analogously if $x_n\notin \omega(C)$. This covers one side of the proof. Note also that here we did not use the fact that $f$ is long-zigzag.
	
	 ($\Leftarrow$) Let $x_n\in\omega(C)$ for every $n\geq 0$ and assume by contradiction that $x$ is not a folding point. Recall from Proposition~\ref{prop:doublespiral} that there are no double spirals so the $0$-basic arc $A_0(x)$ is non-degenerate. Also, we can assume that $x_0$ is contained in the interior of $A_0(x)$ (otherwise we find the smallest $i\in \N$ so that $x_i\in \Int(A_i(x))$). Take $0<\delta<\eps$ such that $\pi_0(A_0(x))\supset (x_0-\delta, x_0+\delta)$ and let $J\subset A_0(x)$ be such that $\pi_0(J)=(x_0-\delta, x_0+\delta)$. Denote by $J_n=\pi_n(J)$ for $n\in\N_0$. We will study the open neighbourhoods of $J$ given by $U_n:=\pi_n^{-1}(J_n)$ for $n\in\N_0$. Note that, since $n$-boxes form a basis for the topology, and after shrinking $\delta$ if necessary, we can find $N\in\N$ such that $U_n$ is homeomorphic to a zero-dimensional set times an arc for every $n\geq N$.\\
	Fix $n\geq N$ and let us study the arcs of maximal length in $U_n$. Every such arc is of the form $B=\underleftarrow{\lim}\{B_i,f\}$, where $B_n=J_n$, and $B_{k+n}$ is maximal such that $f^k(B_{k+n})\subset J_n$. Note that $f|_{B_i}$ is a homeomorphism onto the image for every $1\leq i<n$. If $f|_{B_j}$ is also a homeomorphism for $j\geq n$, then endpoints of $B$ are projected via $\pi_0$ to $x_0-\delta$ and $x_0+\delta$. Since $x_0\in\omega(C)$, there exist $\underleftarrow{\lim}\{B_i,f\}$ for which there is $j\geq n$ such that $f|_{B_j}$ is not one-to-one. Let us take the smallest such $j$. Then, since $\delta<\eps$, $f|_{B_j}$ is not onto and $f$ maps endpoints of the arc $B_j$ to the same point. Moreover, since $f$ is surjective, for every $k\geq 1$ we can find $B_{j+n+k}$ such that $f|_{B_{j+n+k}}$ is onto $B_{j+n+k-1}$. Denote by $Q_n=\underleftarrow{\lim}\{B_i, f\}$; it must be an arc since $U_n$ is a zero-dimensional set of arcs. Moreover, by construction, the endpoints of $Q_n$ are projected via $\pi_n$ to the same point. Specifically, the endpoints are projected via $\pi_0$ to the same point (either $x_0+\delta$ or $x_0-\delta$).
	Note that an arc $Q_n$ exists for infinitely $n\geq N$. Since the neighbourhoods $U_n$ of $A_0(x)$ shrink as $n$ increases, there exists a sequence of arcs $(Q_n)_{n\geq N}$ in $U_0$ such that $\partial Q_n$ converges to an endpoint of $A_0(x)$ as $n\to\infty$. 
	However, this implies that such $U_0$ is not homeomorphic to a zero dimensional set of arcs, a contradiction.
\end{proof}

\begin{remark}
	In general, it can happen that $\pi_n(x)\in\omega(C)$ for every $n\geq 0$, but $x$ is locally homeomorphic 
	to a zero dimensional set of arcs, see for example Figure~\ref{fig:doublespiral1}. In that case $x$ is contained in a double spiral. In general it is very difficult to check if a double spiral point is a folding point or not, see \eg Example~\ref{ex:leo+doublespiral}. A positive answer to Question~\ref{question1} would give a nice way around that difficulty.
\end{remark}

\section{Retractability along $\omega(C)$ and endpoints}\label{sec:persistence}

We generalize the notion of {\em persistent recurrence} introduced by Blokh and Lyubich in \cite{BL}. Namely we define when an interval map is non-retractable and show that a long-zigzag leo map $f\colon I \to I$ is non-retractable along $\omega(C)$ if and only if the {\em set of endpoints} $\E$ equals the {\em set of folding points} $\F$ in $\underleftarrow{\lim}\{I,f\}$.

\begin{definition}\label{def:pullback} Let $f\colon I\to I$ be a map with critical set $C$. 
	Let $x=(x_0, x_{1}, \ldots)\in \underleftarrow{\lim}\{I, f\}$ and let $J\subset I$ be an interval. 
	The sequence $(J_n)_{n\in\N_0}$ of intervals is called a \emph{pull-back} of $J$ along 
	$x$ if $J=J_0$, $x_{k}\in J_k$ and $J_{k+1}$ is the largest interval such that $f(J_{k+1})\subset J_k$ 
	for all $k\in\N_0$. A pull-back is {\em monotone} if $C\cap \Int( J_n)=\emptyset$ 
	for every $n\in\N$. 
\end{definition}

\begin{definition}\label{def:persrec}
	Let $f\colon I\to I$ be a map with critical set $C$. We say that $f$ is {\em retractable along $\omega(C)$} if there exists a
	backward orbit $x=(x_0, x_1, x_2, \ldots)$ in $\omega(C)$ and an open interval $U\subset I$ such that $x_0\in U$ and such that $U$ has a  monotone pull-back along $x$. Otherwise,  $f$ is called {\em non-retractable along $\omega(C)$}. We will often only write {\em non-retractable}.
\end{definition}

\begin{remark}
	Note that the notion of non-retractable map does not require recurrence of critical points or of the critical set $C$. For example, the map in Figure~\ref{fig:twosided} has the property that $\omega(C)\cap C=\emptyset$, but $f$ is non-retractable.
\end{remark}

The motivation for studying when $\E=\F$ comes from the topology of inverse limits of infinitely renormalisable unimodal maps, where it is known that $\F=\E$, and $f|_{\omega(c)}$ is conjugate to an adding machine. Later, {\em strange adding machines} were discovered in non-infinitely renormalisable unimodal maps \cite{BKM}. That led to a hypothesis that $\F=\E$ might be related to embedded adding machines, or at least the property that  $f|_{\omega(c)}$ is one-to-one, see \cite{Al}. However, examples in \cite{AlBr} showed the hypothesis to be false. In \cite{AABC} it is proven that the crucial notion for $\F=\E$ is {\em persistent recurrence}. It turns out that $\F=\E$ if and only if the critical point $c$ of a unimodal map is persistently recurrent. We generalize this to inverse limits of leo long-zigzag  maps $f$ in the following theorem. The more general proof is even simpler than the proof of Theorem 4.13 in \cite{AABC}, once we understand the notions of folding points and endpoints well enough.

\begin{theorem}\label{thm:FisE}
	Let $f\colon I\to I$ be a long-zigzag leo map with the critical set $C$. Then for $\underleftarrow{\lim}\{I, f\}$ it holds that all folding points are endpoints if and only if $f$ is non-retractable along $\omega(C)$.
\end{theorem}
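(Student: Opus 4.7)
The plan is to combine the two prior characterizations. By Theorem~\ref{thm:folding}, folding points in $\underleftarrow{\lim}\{I,f\}$ are precisely those $x = (x_0, x_1, \ldots)$ with $x_n \in \omega(C)$ for every $n \in \N_0$; and by Theorem~\ref{thm:llzzb}, since $f$ is long-zigzag and leo, endpoints coincide with $B$-endpoints. So the theorem reduces to showing that $f$ is non-retractable along $\omega(C)$ if and only if every backward orbit in $\omega(C)$ gives a $B$-endpoint. I would prove each direction by contrapositive, translating between a monotone pull-back of an open neighborhood of $x_0$ along $x$ and the presence of an arc in $\underleftarrow{\lim}\{I,f\}$ containing $x$ in its interior.

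For the implication \emph{retractable $\Rightarrow$ some folding point fails to be an endpoint}, let $x \in \omega(C)^{\N_0}$ and open $U \ni x_0$ witness retractability, with monotone pull-back $(J_n)$. Then $x$ is a folding point. The key claim is that $f|_{\overline{J_n}} \colon \overline{J_n} \to \overline{J_{n-1}}$ is a homeomorphism for every $n \geq 1$. Monotonicity of $f|_{J_n}$ is immediate from $C \cap \Int J_n = \emptyset$; note that $\{0,1\} \subseteq C$ forces $J_n \cap \{0,1\} = \emptyset$ because $J_n$ is open. For surjectivity, I would argue that for every $p \in \partial J_n$, maximality of $J_n$ together with openness of $J_{n-1}$ forces $f(p) \in \partial J_{n-1}$: otherwise $f(p) \in J_{n-1}$, and continuity of $f$ produces a neighborhood of $p$ inside $f^{-1}(J_{n-1})$ which joins the component $J_n$ of $x_n$ in $f^{-1}(J_{n-1})$, contradicting $p \notin J_n$ (this handles both non-critical $p$ and critical $p$, where $f$ folds back into $J_{n-1}$ on the opposite side). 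Together with monotonicity this gives $f(\overline{J_n}) = \overline{J_{n-1}}$. Hence $B := \underleftarrow{\lim}\{\overline{J_n}, f|_{\overline{J_{n+1}}}\}$ is an arc in $\underleftarrow{\lim}\{I,f\}$, homeomorphic to $\overline{U}$ via $\pi_0$, contains $x$, and has $\pi_0(x) = x_0 \in U = \Int \overline{U}$. Therefore $x \in \Int B \subseteq \Int A_0(x)$, so $x$ is not a $B$-endpoint, and by Theorem~\ref{thm:llzzb} not an endpoint.

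For the converse \emph{non-retractable $\Rightarrow$ every folding point is an endpoint}, suppose some folding point $x$ is not an endpoint. Then $x$ is not a $B$-endpoint, so $x \in \Int A_i(x)$ for some $i \geq 0$; since $\hat f^{-i}$ is a homeomorphism identifying $A_i(x)$ with $A_0(\hat f^{-i}(x))$ and preserves the set of folding points, I may replace $x$ by $\hat f^{-i}(x)$ and assume $i = 0$. Pick an arc $A \subseteq A_0(x)$ with $x \in \Int A$ and set $A_n := \pi_n(A)$. Since $\pi_0|_A$ is one-to-one, so is each $\pi_n|_A$, and $\pi_n|_A \colon A \to A_n$ is a homeomorphism onto the closed interval $A_n$, making $f|_{A_{n+1}} \colon A_{n+1} \to A_n$ a monotone homeomorphism with $x_n \in \Int A_n$. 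Choose an open $U \ni x_0$ with $\overline{U} \subseteq \Int A_0$ and let $(J_n)$ be the pull-back of $U$ along $x$. I claim inductively that $J_n \subseteq \Int A_n$: assuming $J_{n-1} \subseteq \Int A_{n-1}$, the component $J_n$ of $x_n$ in $f^{-1}(J_{n-1})$ cannot extend past $\partial A_n$, since $f(\partial A_n) = \partial A_{n-1}$ is disjoint from the open set $J_{n-1}$. As $\Int A_n$ contains no critical points of $f$, neither does $J_n$, so $(J_n)$ is a monotone pull-back along the backward orbit $x \in \omega(C)^{\N_0}$, contradicting non-retractability.

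The main obstacle is the boundary analysis in the first implication: one must prevent the candidate arc $B$ from degenerating. A naive construction via $\bigcap_k f^k(\overline{J_k})$ could a priori shrink to $\{x_0\}$ if boundary effects from critical points of $f$ or from the endpoints $\{0,1\}$ produced strict inclusions $f(\overline{J_n}) \subsetneq \overline{J_{n-1}}$. The argument hinges on the observation that $\{0,1\} \subseteq C$ is itself excluded from $\Int J_n$ by the monotone pull-back condition, which together with maximality of the pull-back upgrades each such inclusion to an equality.
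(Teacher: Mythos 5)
Your proposal is correct and follows essentially the same route as the paper: reduce via Theorem~\ref{thm:folding} and Theorem~\ref{thm:llzzb} to a statement about $B$-endpoints, then translate in both directions between a monotone pull-back of a neighbourhood of $x_0$ and an arc containing $x$ in the interior of its $0$-basic arc. The only difference is that you verify explicitly that $f(\overline{J_n})=\overline{J_{n-1}}$, so that the inverse limit of the pull-back intervals is a non-degenerate arc --- a point the paper's proof asserts without comment.
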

\begin{proof}
	Since $f$ is long-zigzag, we know that $x=(x_0, x_1, \ldots)\in \underleftarrow{\lim}\{I, f\}$ is a folding point if and only if $x_n\in\omega(C)$ for every $n\geq 0$ due to Theorem~\ref{thm:folding}. Also, since $f$ is additionally leo, we know that $x$ is an endpoint if and only if it is a $B$-endpoint (if and only if it is an $L$-endpoint) due to Theorem~\ref{thm:llzzb}. So, $x$ is not an endpoint if and only if there is $n\geq 0$ such that $(x_n, x_{n+1}, \ldots)$ is contained in the interior of its $0$-basic arc. 
	
	If $f$ is retractable, there exists a folding point $x=(x_0, x_{1}, \ldots)\in \underleftarrow{\lim}\{I, f\}$, an interval $J\subset I$ such that 
	$x_0\in \Int(J)$, and an infinite monotone pull-back $(J_n)_{n\in\N_0}$ of $J$ along $x$. 
	Therefore, $\underleftarrow{\lim}\{J_n, f|_{J_n}\}$ is an arc in $\underleftarrow{\lim}\{I, f\}$ and it contains $x$ in its interior, and
	thus $x$ is not an endpoint.
	
	For the other direction, let $f$ be non-retractable and assume that there is a folding point $x=(x_0, x_1, \ldots)\in \underleftarrow{\lim}\{I, f\}$ which is not an endpoint. Without loss of 
	generality we can assume that $x$ is contained in the interior of its $0$-basic arc. Otherwise, we use 
	$\hat f^{-j}(x)$ for some $j\in\N$ large enough. Let $A\subset A_0(x)$ be an open subset of the $0$-basic arc of $x$ such that $x\in \Int(A)$ and such that $A$ does not contain endpoints of $A_0(x)$. Then $(\pi_n(A))_{n\in\N_0}$ is a monotone pull-back of $\pi_0(A)$ along $x$, so $f$ is retractable, a contradiction. 
\end{proof}

Appendix~\ref{appA} is a natural continuation of this section, but since it is considerably more technical, we moved it to the end of the paper. It gives a characterization of interval maps that are non-retractable along $\omega(C)$, through the dynamical properties of the maps.

\section{Endpoints and critical recurrence}\label{sec:recurrence}

Barge and Martin \cite{BM} give a characterization of endpoints in $\underleftarrow{\lim}\{I, f\}$ in terms of dynamics of map $f$. They prove, among other things, that if $f$ has finitely many critical points, a dense orbit, and if $\omega(C)\cap C=\emptyset$, then there are no endpoints. Assumption of dense orbit is indeed required, \eg we can take $f$ which generates a two-sided spiral as in Figure~\ref{fig:twosided} for an example that confirms that. In this section we establish a partial converse of the Barge and Martin result.

First, we show with the following example that only the condition  $\omega(C)\cap C=\emptyset$ is not enough to obtain the converse of the statement above. Thus, we need to impose additional assumptions on $f$.

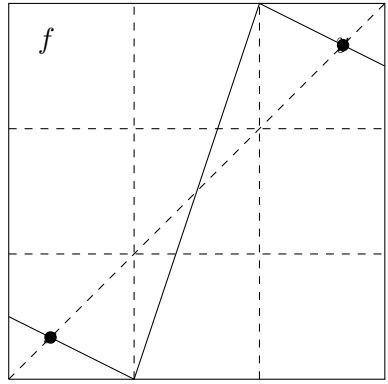
\begin{figure}
	\begin{tikzpicture}[scale=5]
	\draw (0,1/6) -- (1/3, 0) -- (2/3, 1) -- (1,5/6);
	\draw (0,0) -- (0,1) -- (1,1) -- (1,0) -- (0,0);
	\draw[dashed] (1/3, 0) -- (1/3, 1);
	\draw[dashed] (2/3, 0) -- (2/3, 1);
	\draw[dashed] (0, 1/3) -- (1, 1/3);
	\draw[dashed] (0, 2/3) -- (1, 2/3);
	
	\draw[dashed] (0, 0) -- (1, 1);
	
	\draw[solid, fill] (1/9,1/9) circle (0.015);
	\draw[solid, fill] (8/9,8/9) circle (0.015);
	\node at (0.1,0.9) {\small $f$};
	\node at (1/9,1/9) {\scriptsize $x$};
	\node at (8/9,8/9) {\scriptsize $y$};
	\end{tikzpicture}
	\caption{Example for which $\omega(C)\cap C=\emptyset$ but $\protect\underleftarrow{\lim}\{I, f\}$ is an arc and thus has endpoints. Actually, endpoints are $(x,x,x, \ldots)$ and $(y,y,y,\ldots)$, $x,y\neq 1/2$.}
	\label{fig:twosided}
\end{figure}

\begin{example}
	There exists an interval map $f$ so that $\underleftarrow{\lim}(I,f)$ has no endpoints but $\omega(C)\cap C\neq\emptyset$, see map $f$ in Figure~\ref{fig:counter}. The three intervals $I_1=[0,1/3], I_2=[1/3,2/3], I_3=[2/3,1]\subset I$ are $f$-invariant and $\underleftarrow{\lim}(I,f)=\underleftarrow{\lim}(I_1,f|_{I_1})\cup \underleftarrow{\lim}(I_2,f|_{I_2})\cup \underleftarrow{\lim}(I_3,f|_{I_3})$. Note that $\underleftarrow{\lim}(I_1,f|_{I_1})$ and $\underleftarrow{\lim}(I_3,f|_{I_3})$ are disjoint Knaster continua and $\underleftarrow{\lim}(I_2,f|_{I_2})$ are two $\sin(1/x)$-continua that share the same bar. Furthermore, $\underleftarrow{\lim}(I_1,f|_{I_1})\cap\underleftarrow{\lim}(I_2,f|_{I_2})=\{(1/3,1/3, \ldots)\}$ is the endpoint of the Knaster continuum $\underleftarrow{\lim}(I_1,f|_{I_1})$ and the endpoint of one of the rays in $\underleftarrow{\lim}(I_2,f|_{I_2})$. 
	A similar conclusion holds for $\underleftarrow{\lim}(I_2,f|_{I_2})\cap\underleftarrow{\lim}(I_3,f|_{I_3})=\{(2/3,2/3,\ldots)\}$. Therefore $\underleftarrow{\lim}(I,f)$ has no endpoints. However, $\omega(C)\cap C\neq\emptyset$ since the two critical points $1/3$ and $2/3$ are $2$-periodic.
	
\end{example}

\begin{figure}
	\begin{tikzpicture}[scale=5]
	\draw[thick] (0,1/3) -- (1/6, 0) -- (1/3, 1/3) -- (4/9,5/9)--(5/9,4/9)--(2/3,2/3)--(5/6,1)--(1,2/3);
	\draw (0,0) -- (0,1) -- (1,1) -- (1,0) -- (0,0);
	\draw[dashed] (1/3, 0) -- (1/3, 1);
	\draw[dashed] (2/3, 0) -- (2/3, 1);
	\draw[dashed] (0, 1/3) -- (1, 1/3);
	\draw[dashed] (0, 2/3) -- (1, 2/3);
	
	\draw[dashed] (0, 0) -- (1, 1);
	\draw[dashed] (4/9, 4/9) -- (4/9, 5/9)-- (5/9,5/9)--(5/9,4/9) -- (4/9,4/9);
	\draw[solid, fill] (4/9,5/9) circle (0.012);
	\draw[solid, fill] (5/9,4/9) circle (0.012);
	\node at (0.1,0.9) {\small $f$};
	\end{tikzpicture}
	\caption{Example of an interval map $f$ such that $\protect\underleftarrow{\lim}(I,f)$ has no endpoints but $\omega(C)\cap C\neq\emptyset$.}
	\label{fig:counter}
\end{figure}
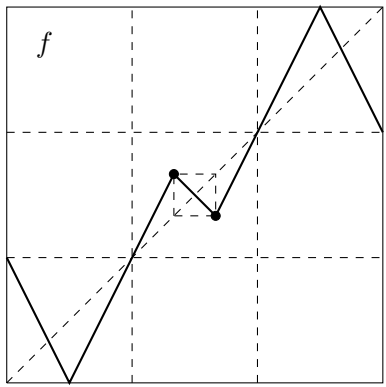

\begin{lemma}\label{lem:uncountably}
	Assume that $f$ is zigzag-free and there is $c\in C$ such that $c\in\omega(c)$. Then $\underleftarrow{\lim}\{I,f\}$ has an endpoint. If additionally $\orb(c)$ is infinite, then $\underleftarrow{\lim}\{I, f\}$ has uncountably many endpoints.
\end{lemma}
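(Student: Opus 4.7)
My plan begins with a reduction: by Lemma~\ref{lem:endpts}, for zigzag-free $f$ the endpoints of $X := \underleftarrow{\lim}\{I, f\}$ coincide with its $B$-endpoints, so it suffices to produce one $x = (x_0, x_1, \ldots) \in X$ with $x_i$ an endpoint of $\pi_i(A_i(x))$ for every $i \in \N_0$. My workhorse sufficient condition is: \emph{if $x_n \in C$ for infinitely many $n$, then $x$ is a $B$-endpoint.} Indeed, whenever $n > i$ and $x_n \in C$, the interval $\pi_n(A_i(x))$ lies in a monotone region of $f^{n-i}$ (because $\pi_i|_{A_i(x)}$ is injective), yet $x_n \in C$ is never interior to any monotone region of $f$, and hence of $f^{n-i}$, so $x_n$ must sit at the boundary of $\pi_n(A_i(x))$; monotonicity of $f^{n-i}$ then transports this endpoint property to $x_i$ in $\pi_i(A_i(x))$.

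I split on whether $\orb(c)$ is finite. If it is, then $c \in \omega(c)$ forces $c$ to be periodic of some period $p$, and defining $x$ by $x_{mp+j} := f^{p-j}(c)$ for $0 \leq j < p$ yields $x \in X$ with $x_n = c \in C$ for every $n \in p\N_0$, so the sufficient condition gives the endpoint at once. If $\orb(c)$ is infinite the literal sufficient condition cannot hold (infinitely many $x_n = c$ would force $c$ periodic), so my plan is a closest-return construction. Pick $n_k \uparrow \infty$ with $f^{n_k}(c) \to c$, and by pigeonhole over the finitely many combinations of (approach side of $f^{n_k}(c)$ relative to $c$) and (local extremum type of $f^{n_k}$ at $c$, which is determined by the orientation of $f^{n_k-1}$ at $f(c)$), pass to a subsequence in the ``good'' combination where the image of a one-sided neighbourhood of $c$ under $f^{n_k}$ contains $c$; this produces preimages $y_k$ of $c$ under $f^{n_k}$ all lying in a fixed monotone region of $f^{n_k}$ adjacent to $c$. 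Applying the same argument recursively with $y_k$ in place of $c$, I extend $\{y_k\}$ into a consistent backward orbit $x \in X$ with $x_0 = c$ and $x_{n_k} = y_k$. For each $i$, at level $n_k > i$ the monotone region $[c, c^*_{k,i}]$ of $f^{n_k-i}$ containing $y_k$ has image with endpoint $u_k^{(i)} := f^{n_k-i}(c)$, and one side of $\pi_i(A_i(x))$ at $x_i$ is bounded in length by $|u_k^{(i)} - x_i|$; applying $f^i$, $f^i([u_k^{(i)}, x_i]) = f^{n_k}([c, y_k]) = [c, c+\delta_k]$ has length $\delta_k \to 0$, and a Hausdorff-continuity argument (using that $f$, and hence every $f^i$, is non-constant on non-degenerate intervals and is zigzag-free by Lemma~\ref{lem:itzigzag}) gives $u_k^{(i)} \to x_i$, so that one side of $\pi_i(A_i(x))$ is squeezed to zero and $x_i$ is forced to be an endpoint. \textbf{The main obstacle} is this last squeezing step, and in particular its uniformity in $i$: the zigzag-freeness of every iterate $f^m$ is exactly what allows one to pin down the monotone regions of $f^{n_k-i}$ near $c$ precisely enough to conclude.

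For the uncountability assertion when $\orb(c)$ is infinite, the construction above admits genuine branching: the pigeonhole argument typically leaves several admissible preimages $y_k$ at each closest-return step, and moreover the whole construction can be started from any $x_0 \in \omega(c)$, which is uncountable (since $c \in \omega(c)$ is non-isolated in $\omega(c)$ once $\orb(c)$ is infinite, making $\omega(c)$ perfect). Encoding the independent binary choices at different stages as an uncountable tree, distinct branches yield backward orbits that differ at some finite coordinate and therefore give distinct $B$-endpoints, producing uncountably many endpoints of $X$.
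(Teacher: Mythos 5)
Your reduction to $B$-endpoints via Lemma~\ref{lem:endpts} is valid, your sufficient condition (infinitely many coordinates in $C$ forces a $B$-endpoint) is correct and cleanly disposes of the case of periodic $c$, and the squeezing strategy for basic arcs is sound in outline. The genuine gap is in the construction of the backward orbit when $\orb(c)$ is infinite. The pigeonhole over (approach side of $f^{n_k}(c)$) and (extremum type of $f^{n_k}$ at $c$) does not deliver what you need: pigeonhole only guarantees that \emph{some} combination recurs, and in two of the four combinations (local minimum with $f^{n_k}(c)>c$, local maximum with $f^{n_k}(c)<c$) the images of \emph{both} branches of $f^{n_k}$ adjacent to $c$ lie entirely on one side of $c$ and cannot contain $c$; in the remaining two combinations the branch image contains $c$ only if it extends past $c$, a quantitative statement about the length of that branch image that you never establish. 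Nothing in the proposal rules out that every sufficiently deep return time falls into a bad combination, so the preimages $y_k$ of $c$ lying in monotone branches of $f^{n_k}$ adjacent to $c$ --- the backbone of your construction --- may simply fail to exist. (For unimodal maps their existence follows from the theory of closest precritical points, but the lemma concerns arbitrary zigzag-free maps, possibly with infinitely many critical points, where $c$ need not be a two-sided fold and $f^{n_k-1}$ need not be locally monotone at $f(c)$, so even the dichotomy ``local max / local min determined by the orientation of $f^{n_k-1}$ at $f(c)$'' is unjustified.) Two further steps are unsubstantiated: a consistent backward orbit requires $f^{n_{k+1}-n_k}(y_{k+1})=y_k$, not merely $f^{n_{k+1}}(y_{k+1})=c$, and this compatibility is not arranged by choosing each $y_k$ independently; and the claim that zigzag-free maps and their iterates are non-constant on non-degenerate intervals is false (a trapezoidal map is zigzag-free), so the step $u_k^{(i)}\to x_i$ needs additional care.

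The paper's proof avoids the preimage problem entirely: instead of pulling $c$ back to itself, it picks return times $k_j$ with $f^{k_j}(c)$ inside a previously constructed small neighbourhood $K_{k_{j-1}}$ of $c$, takes a tiny interval $L_{j-1}\ni f^{k_j}(c)$, and pulls $L_{j-1}$ back through $c$ by continuity alone (choosing $K_{k_j}\ni c$ with $f^{k_j}(K_{k_j})\subset L_{j-1}$); the endpoint is the nested intersection of forward images of the $L_j$, and the endpoint property is obtained by contradiction, since two incomparable subcontinua through $x$ would force a critical point in the interior of an interval to be mapped to the interior of its image, violating zigzag-freeness of the iterates. Uncountability there comes from the freedom in choosing the intervals $L_j$ (equivalently, the return times), which produces a Cantor scheme of distinct points; your version of this step is anchored at the single critical point $c$, and the assertions that the construction ``typically'' branches or ``can be started from any $x_0\in\omega(c)$'' are not arguments, since your squeezing step specifically uses that the $y_k$ are adjacent to the critical point $c$ and map onto $c$.
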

\begin{proof}
	
	Denote by $K_0\subset I$ an interval such that $f^{k_1}(c)\in\Int(K_0)$ for some $k_1\in\N$. If $f^k(c)\in\{0,1\}$ for every $k\in\N$, then $(0,0,\ldots)$ or $(0,1,0,1,\ldots)\in\underleftarrow{\lim}\{I,f\}$ and they are endpoints, see Theorem 2.9  in \cite{BM} for the detailed argument. We choose an arbitrary small interval $L_0$ such that $f^{k_1}(c)\in\Int (L_0)$, and an interval $K_{k_1}$ such that $c\in\Int(K_{k_1})$ and $f^{k_1}(K_{k_1})\subset L_0$. 
	
	Since $c\in\omega(c)$, we can continue inductively and for every $j\geq 2$ find $k_j\in\N$ such that $f^{k_j}(c)\in\Int(K_{k_{j-1}})$ (infinitely many if $\orb(c)$ is infinite). We choose an arbitrary small interval $L_{j-1}$ such that $f^{k_j}(c)\in\Int(L_{j-1})$, and an interval $K_{k_j}$ such that $c\in\Int(K_{k_j})$, $f^{k_j}(K_{k_j})\subset L_{j-1}$. 
	
	Since the intervals $L_j$ can be chosen arbitrarily small, and since $L_j$ is closed for every $j\in\N_0$, we can define $x_0=\cap_{i\geq 1} f^{k_1+k_2+\ldots +k_i}(L_i)\subset L_0$, and $x_{k_j}=\cap_{i\geq j+1} f^{k_{j+1}+k_{j+2}+\ldots +k_i}(L_i)\\
	\subset L_j$, for $j\geq 1$. We claim that $x=(x_0,x_1,\ldots, x_{k_1},x_{k_1+1}, \ldots)\in\underleftarrow{\lim}\{I,f\}$ is an endpoint. Note that since $L_j$ can be chosen arbitrary, if $\orb(c)$ is infinite, we can construct uncountably many different such points $x$.
	
	Since $f$ is zigzag-free, and thus every $f^n$ is zigzag-free (Lemma~\ref{lem:itzigzag}), then for every intervals $K,J$ such that $f^n(J)=K$ and $J$ minimal such (so there is no $J'\subset J$ such that $f^n(J')=K$), it holds that critical points of $f^n$ in $\Int(J)$ are mapped to $\partial K$.
	
	Assume that there are subcontinua $A,B\subset\underleftarrow{\lim}\{I,f\}$ such that $x\in A\cap B$, and such that $A\not\subset B$, $B\not\subset A$. Denote the projections by $A_i=\pi_i(A), B_i=\pi_i(B)$ for $i\geq 0$. Thus $x_i\in A_i\cap B_i$ for all $i\geq 0$. There exists $N\in\N$ such that $A_i\setminus B_i, B_i\setminus A_i\neq\emptyset$ for all $i\geq N$. By the definition of $x$ and since $L_j$ can be chosen to be arbitrarily small, there are $k,j\in\N$ such that $f^k(L_j)\subset A_N\cap B_N$. It follows that there is a critical point $c\in\Int(A_{N+k}\cup B_{N+k})$ which is mapped to $\Int(A_N\cup B_N)$, and that is impossible since $f^k$ is zigzag-free. So, $x$ is an endpoint.
\end{proof}

\begin{theorem}\label{thm:endpt}
	If $f$ is long-zigzag leo map, and there exists $c\in C$ such that $c\in\omega(c)$, then there exists an endpoint in $\underleftarrow{\lim}\{I,f\}$. If additionally $\orb(c)$ is infinite, then $\underleftarrow{\lim}\{I,f\}$ has uncountably many endpoints.
\end{theorem}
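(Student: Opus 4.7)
The plan is to mimic the proof of Lemma~\ref{lem:uncountably} nearly verbatim, using the \emph{leo} hypothesis to reduce to the long-zigzag analogue of its final step. Let $\eps > 0$ be the constant supplied by Definition~\ref{def:longzigzag} for $f$. First I would rerun the inductive construction from Lemma~\ref{lem:uncountables}, adding only the requirement that every interval $L_j$ satisfy $|L_j| < \eps$; since the $L_j$ were already taken arbitrarily small this costs nothing, and if $\orb(c)$ is infinite uncountably many distinct sequences $(L_j)$ yield uncountably many distinct points $x \in \underleftarrow{\lim}\{I, f\}$. It therefore suffices to prove every such $x$ is an endpoint.

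Assume toward contradiction that $x$ is not an endpoint, so there exist proper subcontinua $A, B \subset \underleftarrow{\lim}\{I, f\}$ with $x \in A \cap B$, $A \not\subset B$, and $B \not\subset A$ (both must be proper since otherwise one would contain the other). Setting $A_i := \pi_i(A)$ and $B_i := \pi_i(B)$, the leo hypothesis together with Observation~\ref{obs:leo} yields $|A_i|, |B_i| \to 0$. Combined with the identity $A_j = f^{i-j}(A_i)$ and its analogue for $B$, the strict non-inclusions force $A_i \setminus B_i, B_i \setminus A_i \neq \emptyset$ for all sufficiently large $i$. I would fix $N$ such that these asymmetric conditions and $|A_i \cup B_i| < \eps$ hold for every $i \geq N$.

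From here I would replay the final step of Lemma~\ref{lem:uncountables}: its construction produces indices $j, k \in \N$ for which some critical point $c$ of $f$ lies in the interior of $A_{N+k} \cup B_{N+k}$ with $f^k(c) \in \Int(A_N \cup B_N)$. In the zigzag-free case this immediately contradicts Lemma~\ref{lem:itzigzag}. In the long-zigzag setting, the bound $|A_N \cup B_N| < \eps$ takes over: since $f^k$ restricted to $A_{N+k} \cup B_{N+k}$ is non-monotone with image of diameter less than $\eps$, I would extract a minimal sub-interval $[a', b'] \subset A_{N+k} \cup B_{N+k}$ on which $f^k$ satisfies the hypotheses of Definition~\ref{def:longzigzag}: $f^k([a', b'])$ equals $[f^k(a'), f^k(b')]$ (or reversed), $f^k$ avoids $\{f^k(a'), f^k(b')\}$ on $(a', b')$, and the image has diameter less than $\eps$. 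Yet $f^k|_{[a', b']}$ is non-monotone, directly contradicting the long-zigzag property of $f$ at iterate $k$.

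The main obstacle is the extraction of the clean zigzag sub-interval $[a', b']$: Definition~\ref{def:longzigzag} requires a specific endpoint-avoidance condition on the interior, so one cannot simply take $[a', b'] := A_{N+k} \cup B_{N+k}$. However, because $f^k$ is non-monotone on an interval whose image has small diameter, a standard fold-extraction (walking outward from a local extremum of $f^k$ until endpoint values are first matched by a non-monotone run) produces the required $[a', b']$; this is the only step where the long-zigzag analysis genuinely differs from the zigzag-free argument, and the small-diameter image makes it essentially automatic.
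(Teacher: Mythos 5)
Your proposal is correct and follows essentially the same route as the paper: the paper's proof of Theorem~\ref{thm:endpt} likewise invokes Observation~\ref{obs:leo} to choose $M$ with $|A_M\cup B_M|<\eps$ for the long-zigzag constant $\eps$ and then repeats the argument of Lemma~\ref{lem:uncountably}. Your extra care in extracting the minimal sub-interval $[a',b']$ satisfying the hypotheses of Definition~\ref{def:longzigzag} only makes explicit a step the paper leaves implicit in the phrase ``proceed analogously.''
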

\begin{proof}
	The proof follows analogously as in the previous lemma. Since $f$ is leo, it follows by Observation~\ref{obs:leo} that we can take $M\in\N$ such that $|A_M\cup B_M|<\eps$, where $\eps$ is the constant from the definition of a long-zigzag map and then proceed analogously as in the proof of Lemma~\ref{lem:uncountably}.
\end{proof}

\begin{remark}
	In the proof of Lemma~\ref{lem:uncountably} we use in several places that $c\in \omega(c)$. It is natural to ask if the proof works in different setting.
	The previous lemma and theorem also hold true if we assume that $c\in\omega(C)$ for every $c\in C$ under the zigzag-free and under the long-zigzag leo assumptions respectively, if we do the following. On the first occurrence replace $c\in\omega(c)$ with $c\in\omega(C)$ and then take any $c'$ such that $f^k(c')\in K$ and continue with that $c'$. In the next step we would have "since $c'\in\omega(C)$" etc. To get uncountably many endpoints in that case one needs to assume that every neighbourhood of every critical point contains at least two points from $\orb(C)$ as one can see following the preceding proof.
\end{remark}

\begin{example}
	Besides some obvious examples of non-leo maps that give in the inverse limit countably infinitely many endpoints,
	there also exist leo maps $f\colon I\to I$ such that $\underleftarrow{\lim}\{I,f\}$ has countably infinitely many endpoints, see \eg Figure~\ref{fig:leoexample}.
	\begin{figure}
		\begin{tikzpicture}[scale=5]
		\draw[thick] (1,0) -- (1/2, 1) -- (5/16, 5/16) -- (1/4,1/2) -- (5/32, 5/32) -- (1/8,1/4)--(3/32,3/32);
		\draw (0,0) -- (0,1) -- (1,1) -- (1,0) -- (0,0);
		\draw (1/2,-0.02)--(1/2,0.02);
		\draw (5/16,-0.02)--(5/16,0.02);
		\draw (1/4,-0.02)--(1/4,0.02);
		\draw (5/32,-0.02)--(5/32,0.02);
		\draw (1/8,-0.02)--(1/8,0.02);
		\draw (-0.02,1/2)--(0.02,1/2);
		\draw (-0.02,1/4)--(0.02,1/4);
		\draw (-0.02,1/8)--(0.02,1/8);
		\draw[dashed] (0, 0) -- (1, 1);
		\node at (0.5,-0.1) {\small $\frac{1}{2}$};
		\node at (1/4,-0.1) {\small $\frac{1}{4}$};
		\node at (0.115,-0.1) {\small $\frac{1}{8}$};
		\node at (5/16,-0.1) {\small $\frac{5}{16}$};
		\node at (5/32+0.01,-0.1) {\small $\frac{5}{32}$};
		\node at (-0.1,1/2) {\small $\frac{1}{2}$};
		\node at (-0.1,1/4) {\small $\frac{1}{4}$};
		\node at (-0.1,1/8) {\small $\frac{1}{8}$};
		\node at (0.1,0.9) {\small $f$};
		\end{tikzpicture}
		\caption{Locally eventually onto interval map for which the inverse limit has countably infinitely many endpoints.}
		\label{fig:leoexample}
	\end{figure}
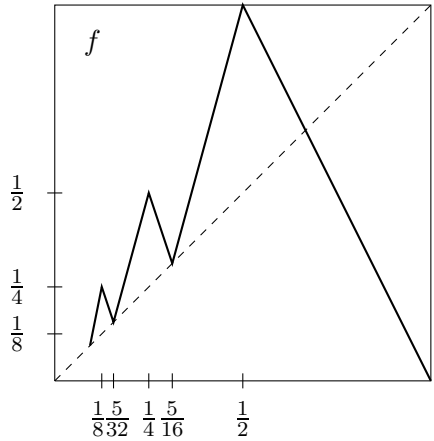
	Note that every $(x,x,x,\ldots)$, where $x\in I$ is a fixed point of $f$, is an endpoint (see \cite{BM}, Example 3 for the detailed argument), which gives countably infinitely many endpoints. Furthermore, all points $(x_0,x_1,x_2,\ldots)$ for which there exists $N\in \N$ so that $x_i\neq 1/2^n$ for all $i>N$ are contained in interiors of their $N$-basic arcs and are thus not endpoints. There is only countably many remaining points that can be folding points, they are of the form $\hat f^n((1/2,1/2^{2},1/2^3,\ldots))$ for every $n\in \Z$. 
\end{example}

\appendix

\section{Characterization of retractability in piecewise monotone interval maps (by Henk Bruin)}\label{appA}
This appendix is a natural (however more technical) continuation of Section~\ref{sec:persistence}.
In general, it is difficult to check if $f$ is retractable or non-retractable along $\omega(C)$, or to see what it has to do with recurrence. In the case of unimodal maps, \cite[Section 3]{Br-trans} gives different (non-equivalent) notions of recurrence of the critical point which can be checked using symbolic properties of kneading sequences. We relate the notion of a non-retractable map introduced in Section~\ref{sec:persistence} to the notions from \cite{Br-trans}, and we generalize them to the multimodal setting. 


\begin{definition}
	Given a piecewise monotone map $f\colon I\to I$, for $x\in I$ and $n\in\N$ we define
	$$r_n(x)=\min\{|f^n(x)-f^n(a)|, |f^n(x)-f^n(b)|\},$$ 
	$$R_n(x)=\max\{|f^n(x)-f^n(a)|, |f^n(x)-f^n(b)|\},$$
	where $[a,b] \ni x$ is a maximal interval such that $f^n|_{[a,b]}$ is monotone. 
	Note that if $x$ is a critical point of $f^n$, then there are two intervals $[a,b]$ we can choose, but regardless of this choice, 
	if $f^N(x)\in C$ for some $N\in\N$, then $r_n(x)=0$ for all $n>N$, so $r_n(x)$ is well-defined for every $x\in I$ and $n\in\N$. 
	For $R_n(x)$ we choose the maximum of the lengths $f^n([a,b])$ if there are two choices.
	We write $M_n(x)=f^n([a,b])$, taking again the maximal length interval if there are two choices.
\end{definition}

Given a map $f\colon I\to I$ and $c\in C$, let $\tilde c=f^k(c)$, 
where $k\in\N$ is the smallest such that $f^k(c)\in\Int(I)$. Note that $\tilde c$ might not be well defined if 
$f(0),f(1)\in\{0,1\}$, but if it is, then $\tilde c=f^k(c)$ for $k\leq 3$. In particular, 
if $f:[f^2(c),f(c)]\to [f^2(c),f(c)]$ is unimodal map restricted to its core, then $\tilde c=f(0)=f^2(1)=f^3(c)$. 
For simplicity, we will abuse the notation and denote the interval $M_{n+1-k}(\tilde c)$ by $M_n(f(c))$, 
and similarly $r_n(f(c)), R_n(f(c))$.

\begin{observation}
If $\omega(C)\subset\{0,1\}$, then $f$ is automatically non-retractable along $\omega(C)$, since no point of $\omega(C)$ is contained in an open interval of $I$.
\end{observation}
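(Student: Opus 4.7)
The plan is to unpack Definition~\ref{def:persrec} and immediately derive a contradiction. Suppose, toward a contradiction, that $f$ is retractable along $\omega(C)$. By definition this gives me a backward orbit $x=(x_0,x_1,x_2,\ldots)$ with every $x_i\in\omega(C)$, together with an open interval $U\subset I$ such that $x_0\in U$ and $U$ admits a monotone pull-back along $x$. The hypothesis $\omega(C)\subset\{0,1\}$ forces $x_0\in\{0,1\}$.

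The only thing to verify is that the condition $x_0\in U$ is then impossible. I will use the convention adopted throughout the paper (and implicit in Definition~\ref{def:persrec}): an open interval of $I=[0,1]$ means a set of the form $(a,b)$ with $0\le a<b\le 1$, that is, a subset of $I$ which is open in $\mathbb{R}$. With this convention, no such $U$ contains $0$ or $1$, so $x_0\in U$ contradicts $x_0\in\{0,1\}$.

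The main (and only) obstacle here is purely definitional: one must be sure that ``open interval of $I$'' is read in the ambient-$\mathbb{R}$ sense rather than the subspace-topology sense (where $[0,a)$ and $(b,1]$ would qualify). Since Definition~\ref{def:persrec} speaks of an open interval containing $x_0$ and then pulls it back under $f$ as an interval in $I$, the ambient-$\mathbb{R}$ reading is the natural one, and the observation follows in a single line.

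Hence no retractable data $(x,U)$ can exist when $\omega(C)\subset\{0,1\}$, so $f$ is automatically non-retractable along $\omega(C)$, as claimed. No machinery from Sections~\ref{sec:zigzags}--\ref{sec:recurrence} is needed; this observation just records that the boundary-only case of $\omega(C)$ is vacuously covered by the definition.
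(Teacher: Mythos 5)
Your proposal is correct and is essentially identical to the paper's own justification, which is simply the inline clause ``since no point of $\omega(C)$ is contained in an open interval of $I$'': retractability requires an open interval $U\ni x_0$ with $x_0\in\omega(C)\subset\{0,1\}$, which is impossible under the ambient-$\mathbb{R}$ reading of ``open interval.'' Your explicit remark about that reading is a reasonable clarification, but the argument is the same one-liner the paper intends.
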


We call a map $f$ {\em closure non-retractable} if for every open interval $U$ there is $N \in \N$ such that 
$U$ can be pulled back along backward orbits $(x_n)_{n \geq 0}$ for at most
$N$ times, where $x_0 \in \overline{U}$. Otherwise $f$ is called {\em closure retractable}.
This is stronger than non-retractable, which has the same definition, except that $x_0$ must be in the interior of $U$ itself. 

\begin{lemma}\label{lem:closureper}
Let $f: I\to I$ is piecewise monotone map with critical set $C$.
Then $f$ is closure non-retractable if and only if $\limsup_{n \to \infty} R_n(f(c)) = 0$ for all $c\in C$.
\end{lemma}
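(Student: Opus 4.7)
The plan hinges on a single \emph{containment estimate}: if $y\in I$, $n\ge 1$, and $[\alpha,\beta]$ is the maximal $f^n$-monotone interval containing $y$, then writing $k=\min\{j\ge 0:f^j(\alpha)\in C\}$, $c=f^k(\alpha)\in C$, $m=n-k-1$, the map $f^{k+1}$ is monotone on $[\alpha,\beta]$ with $f^{k+1}(\alpha)=f(c)$, so $f^{k+1}([\alpha,\beta])$ embeds in the maximal $f^m$-monotone interval around $f(c)$ with $f(c)$ as an endpoint. Applying $f^m$ yields $M_n(y)\subset M_m(f(c))$ with $f^n(\alpha)=f^m(f(c))$ an endpoint of $M_n(y)$ sitting inside $M_m(f(c))$, hence $|M_n(y)|\le R_m(f(c))$. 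A symmetric bound holds at $\beta$. This estimate drives both directions.

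For $(\Rightarrow)$ I argue the contrapositive. Assume $\limsup_n R_n(f(c_0))=2\varepsilon>0$ for some $c_0\in C$ and pick $n_k\to\infty$ with $R_{n_k}(f(c_0))\ge\varepsilon$. Extracting further subsequences, $f^{n_k+1}(c_0)\to y^*$ and the longer side of $M_{n_k}(f(c_0))$ is consistently (say) to the right of $f^{n_k+1}(c_0)$, so for large $k$ the interval $M_{n_k}(f(c_0))$ contains $(y^*+\varepsilon/3,\,y^*+2\varepsilon/3)$; inside it I fix any open $U$ and any $x_0\in U$. Since $f^{n_k}$ restricts to a monotone bijection from the maximal $f^{n_k}$-monotone interval $[A_k,B_k]\ni f(c_0)$ onto $M_{n_k}(f(c_0))$, there is a unique $x_{n_k}\in[A_k,B_k]$ with $f^{n_k}(x_{n_k})=x_0$; setting $x_j:=f^{n_k-j}(x_{n_k})$ and defining $J_j$ as the sub-interval of $f^{n_k-j}([A_k,B_k])$ that $f^j$-maps bijectively onto $U$, one obtains a monotone pull-back of $U$ of length $n_k$ starting at $x_0\in\overline{U}$ (monotonicity of each $J_j$ follows because each $f^{n_k-j}([A_k,B_k])$ is $f$-monotone, being part of an $f^{n_k}$-monotone chain). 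As $n_k\to\infty$, $f$ is closure retractable.

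For $(\Leftarrow)$, assume $R_n(f(c))\to 0$ for every $c\in C$ (equivalent to $\limsup=0$ since $R_n\ge 0$ and $C$ is finite). Fix open $U$, let $\delta=|U|$, and choose $N_0$ with $R_m(f(c))<\delta$ for all $m\ge N_0$, $c\in C$. Given any monotone pull-back of $U$ of length $n$ with $x_0\in\overline{U}$, the key estimate applied at both endpoints of the maximal $f^n$-monotone interval $[\alpha_n,\beta_n]\supset J_n$ forces $m_a^{(n)},m_b^{(n)}<N_0$ (else $|M_n(x_n)|\ge\delta$ would be bounded above by something $<\delta$), so $M_n(x_n)$ has both endpoints in the finite set $S=\{f^{m+1}(c):0\le m<N_0,\,c\in C\}$ and belongs to a finite family $\mathcal{W}$ of intervals each containing $U$. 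If arbitrarily long pull-backs existed, then either $m_a^{(n)}\to\infty$ along a subsequence---in which case a pigeonhole restriction to constant $c_a^{(n)}=c^*$ gives $R_{m_a^{(n)}}(f(c^*))\ge\delta$ with $m_a^{(n)}\to\infty$, contradicting $R_m(f(c^*))\to 0$---or $m_a^{(n)},m_b^{(n)}$ stay bounded and a diagonal compactness extraction yields an infinite monotone pull-back along some backward orbit, along which $M_j(x_j)$ is nested-decreasing in $\mathcal{W}$ (because $f$ carries $[\alpha_{j+1},\beta_{j+1}]$ into $[\alpha_j,\beta_j]$) and hence eventually constant at some $W^*$. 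Constancy forces $f([\alpha_{j+1},\beta_{j+1}])=[\alpha_j,\beta_j]$ for large $j$; a further diagonal extraction plus pigeonhole on the finite parameter set $\{(m,c):f^{m+1}(c)\in\partial W^*\}$ yields a periodic monotone cycle of intervals, producing a non-degenerate $f^p$-invariant monotone interval and a critical $c^*$ whose iterates revisit an endpoint of $W^*$ periodically; consequently $R_{m^*+kp}(f(c^*))$ remains bounded below by the cycle length for all $k\ge 0$, again contradicting $R_m(f(c^*))\to 0$.

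The main obstacle is this last step: upgrading the ``eventually constant $W^*$'' structure along an infinite backward monotone orbit into a genuine periodic $f^p$-invariant monotone interval, and then extracting a critical point that witnesses $\limsup_m R_m(f(c^*))>0$. The forward direction, the finite-family reduction, and the unbounded-$m_a$ subcase are routine consequences of the containment estimate; the technical work lies entirely in the diagonal compactness plus pigeonhole argument needed to produce the periodic monotone cycle in the bounded subcase.
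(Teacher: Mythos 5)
Your proposal has a genuine gap in both directions, and in both cases the missing ingredient is the same: the definition of (closure) retractability quantifies only over backward orbits \emph{lying in $\omega(C)$} (the appendix's definition inherits this from Definition~\ref{def:persrec}; the paper's own proof of the reverse implication begins ``there is an interval $U$ and an infinite backward orbit inside $\omega(C)$''). In your forward direction you exhibit arbitrarily long monotone pull-backs of $U$ along the backward orbit $x_j=f^{n_k-j}(x_{n_k})$ obtained by taking preimages of an \emph{arbitrary} $x_0\in U$ inside the monotone branch of $f^{n_k}$; nothing forces these points to lie in $\omega(C)$, so this does not witness closure retractability. The paper avoids this by pulling $U$ back along (a segment of) the forward orbit of the accumulation point $x=\lim_k f^{1+n_k}(c)$, which lies in $\omega(C)$ together with its whole forward orbit; that is the step your construction omits and it is not cosmetic.

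In the reverse direction the same omission is what makes your argument long and, by your own admission, incomplete. The hypothesis $x_n\in\omega(C)$ gives the paper a three-line proof: since $U_n\ni x_n$ meets $\omega(C)$, there is a \emph{minimal} $k$ with $f^{k+1}(c)\in U_n$ for some $c\in C$; minimality forces both endpoints of $M_k(f(c))$ outside $U_n$, so $M_k(f(c))\supset U_n$, and composing with the monotone $f^n|_{U_n}$ gives $M_{n+k}(f(c))\supset U$ and $R_{n+k}(f(c))\ge|U|/2$ with $n+k\to\infty$. Your route instead analyses the branch $M_n(x_n)$ at $x_n$ itself via the (correct) containment estimate, but (i) the inequality $|M_n(x_n)|\ge\delta$ presupposes $f^n(J_n)=U$, which the definition of pull-back does not guarantee when some $J_j$ abuts $\partial I$; (ii) after the reduction to a finite family $\W$ you only obtain $R_{m^*}(f(c^*))\ge\delta$ for finitely many fixed values $m^*<N_0$, which is vacuous against $\limsup_m R_m(f(c^*))=0$; and the ``periodic monotone cycle'' upgrade that is supposed to produce arbitrarily large such $m$ is exactly the unresolved step you flag. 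I do not see how to close it without reintroducing the $\omega(C)$ hypothesis, since an attracting fixed point inside a monotone branch, away from $\omega(C)$, realizes the ``eventually constant $W^*$'' picture without forcing $\limsup_m R_m(f(c))>0$ for any $c$. You should restructure both halves around the fact that the backward orbit, and hence every pull-back interval, meets $\omega(C)$.
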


\begin{proof}
	For both implications we argue by the contrapositive.\\
	($\Rightarrow$) Suppose there is $c \in C$ and $\delta > 0$ such that $R_n(f(c)) > \delta$ infinitely often.
	Hence there is a sequence $(n_k)$ such that $n_{k+1} - n_k \to \infty$,
	$f^{1+n_k}(c) \to x$ and $R_{n_k}(f(c)) > \delta/2$. In particular, $x \in \omega(C)$ and
	so is its forward orbit.
	Therefore, for at least one of $U = [x-\delta/2, x]$ or $U = [x, x+\delta/2]$,
	we can pull $U$ back along the orbit $f^{n_{k+1}-n_k}(x), f^{n_{k+1}-n_k-1}(x), \dots,
	x$ in a monotone way.
	Since $n_{k+1}-n_k \to \infty$, we can find an infinite path in $\omega(C)$ along which
	$U$ can be pulled back along an orbit in $\omega(C)$ indefinitely, contrary to $f$ being closure non-retractable.\\[2mm]
($\Leftarrow$)
	Suppose $f$ is closure retractable, so there is an interval $U$ and an infinite backward orbit
	inside $\omega(C)$, i.e.,
	$f(x_{n+1}) = x_n \in \omega(C)$ for $n \geq 0$ and $x \in U$,
	along which $U$ can be monotonically pulled back.
	We denote the pull-back intervals by $U_n$.
	Choose $n$ arbitrary and take $k \geq 0$ minimal such that $c_{k+1} \in U_n$ for some $c\in C$.
	Because $U \cap \omega(C)\neq\emptyset$, such $k$ can be found.
	Then $M_{k}(f(c)) \supset U_n$ and therefore $M_{n+k}(f(c)) \supset U$ and $R_{n+k}(f(c)) > |U|/2$.
	Since $n$ is arbitrary, we find $\limsup_n R_n(f(c)) > 0$.
\end{proof}

\begin{corollary}\label{cor:persis}
Let $f: I\to I$ be a piecewise monotone map with critical set $C$.
If $\limsup_{n \to \infty} R_n(f(c)) = 0$ for all $c\in C$, then $f$ is non-retractable along $\omega(C)$.
\end{corollary}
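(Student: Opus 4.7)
The plan is to derive the corollary as an immediate consequence of Lemma~\ref{lem:closureper} together with the comparison between the two notions of retractability that was flagged just before that lemma. Concretely, the hypothesis $\limsup_{n\to\infty} R_n(f(c)) = 0$ for every $c \in C$ is exactly the condition appearing in the ``if and only if'' of Lemma~\ref{lem:closureper}, so I can apply the lemma to conclude that $f$ is closure non-retractable.

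It then remains to observe that closure non-retractability implies non-retractability along $\omega(C)$. Suppose toward a contradiction that $f$ were retractable along $\omega(C)$: by Definition~\ref{def:persrec} there would exist a backward orbit $x = (x_0, x_1, x_2, \ldots)$ in $\omega(C)$ and an open interval $U \ni x_0$ admitting a monotone pull-back $(U_n)_{n \geq 0}$ along $x$, meaning $C \cap \Int(U_n) = \emptyset$ for all $n \in \N$. Since $x_0 \in U \subset \overline{U}$, this exhibits an infinite (hence arbitrarily long) monotone pull-back of $U$ along a backward orbit starting from a point of $\overline{U}$, contradicting the uniform finite bound $N$ demanded by the definition of closure non-retractable. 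Hence $f$ must be non-retractable along $\omega(C)$.

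The main ``obstacle'' has in fact already been overcome in the proof of Lemma~\ref{lem:closureper}: that is where the connection between the asymptotic size of the images $M_n(f(c))$ and the existence of arbitrarily long monotone pull-backs inside $\omega(C)$ is actually forged, via the observation $M_{n+k}(f(c)) \supset U$ used there. Once that lemma is available, the corollary itself is a one-line passage from the stronger notion (pull-back bound from $\overline{U}$) to the weaker one (no infinite pull-back from $\Int(U)$), requiring nothing beyond the trivial inclusion $U \subset \overline{U}$, so no new technical difficulties arise here.
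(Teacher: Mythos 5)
Your proposal is correct and follows exactly the paper's route: the paper likewise derives the corollary in one line from Lemma~\ref{lem:closureper} by noting that closure non-retractability is stronger than non-retractability along $\omega(C)$. Your explicit contradiction argument for that implication (an infinite monotone pull-back from $x_0\in U\subset\overline{U}$ violates the uniform bound $N$) just spells out what the paper leaves as immediate.
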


\begin{proof}
 Since closure non-retractable is stronger than non-retractable, this implication is immediate from Lemma~\ref{lem:closureper}.
\end{proof}

If $f\in C^2$ does not have attracting periodic orbits or wandering intervals (in particular, for piecewise monotone maps
with constant slope $> 1$), then the following {\em non-contraction property} holds:
\begin{quote}
There is $\delta_0$ such that if $J$ is a subinterval in $B(C, \delta_0) \setminus C$ 
and $j \geq 1$ is minimal such that $f^j(J) \cap C \neq \emptyset$, then $|f^j(J)| > 2 |J|$.
\end{quote}
See e.g.\ \cite{BRSS} for metric results in this direction. 
Without the $C^2$ smoothness assumption, the non-contraction property may fail, as
the following example shows:
$$
f:I \to I, \qquad f(x) = \begin{cases} 
                          2x(1-x) + \frac12, & \text{ if } x \in [0,\frac12], \\[2mm]
                          2(1-x)  & \text{ if } x \in [\frac12,1].
                         \end{cases}
$$
For this map, any left neighbourhood $J$ of $c = \frac12$ returns to a right neighbourhood
of $c$ after three iterations, and if $J$ is sufficiently small, then $|f^3(J)| < |J|$.

However, the non-contraction property holds for maps with piecewise constant slope $\geq 1$; 
this is easy to prove provided $f^j(C) \cap C = \emptyset$.
Since maps without wandering intervals or periodic intervals (other than the whole space $[0,1]$) 
are topologically conjugate to constant slope maps as above, and the nature of endpoint in inverse limit spaces 
is not affected by conjugacies, it is not very restrictive to make this assumption.
In particular, leo maps have neither periodic interval nor wandering intervals, and they have positive entropy.
Therefore they are conjugate to maps with the non-contraction property, and 
so Theorem~\ref{thm:per} below fits in the setting of Theorem~\ref{thm:FisE}.

\begin{lemma}\label{non-dense}
Assume that the piecewise monotone map $f:I \to I$ satisfies the non-contraction property.
If $r_n(f(c)) \to 0$ for every $c \in C$, then $\omega(C)$ is nowhere dense.
\end{lemma}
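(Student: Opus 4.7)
The plan is a proof by contradiction. Suppose $\omega(C)$ has nonempty interior, containing an open interval $U\subset B(C,\delta_0)\setminus C$ with $|U|<\delta_0/2$. Since $U\subset\bigcup_{c\in C}\omega(c)$ and $C$ is finite, some $c\in C$ admits a subsequence $n_k\to\infty$ with $f^{n_k}(c)\in U$ converging to $p\in\overline U$. For each $k$, let $[a_k,b_k]\ni f(c)$ be the maximal interval on which $f^{n_k-1}$ is monotone, with image $M_k$. By the hypothesis $r_{n_k-1}(f(c))\to 0$, one endpoint $\xi_k=f^{n_k-1-i_k}(c'_k)$ of $M_k$ (with $c'_k\in C$ and $i_k\geq 0$ minimal such that $f^{i_k}(b_k)=c'_k$) converges to $p$; by finiteness of $C$ we may pass to a subsequence with $c'_k=c'$ constant.

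The main case to treat is when $i_k$ stays bounded: a further subsequence gives $i_k=i$ and $b_k=b\in f^{-i}(c')$ both constant, so $J=[f(c),b]$ is a fixed interval, $f^{n_k-1}|_J$ is monotone with image length $\to 0$. Since $n_k\to\infty$, $f^m|_J$ is monotone for every $m\in\N$; equivalently, $\Int(J)\cap\bigcup_{m\geq 0}f^{-m}(C)=\emptyset$. Picking a small closed sub-interval $J_0=[b-\eps,b]$ with $J_0\subset B(C,\delta_0)\setminus C$ (adjusting slightly if $b\in C$ by working with an appropriate preimage), the first forward-image of $J_0$ to meet $C$ occurs at time $i$ via $b\mapsto c'$, and the non-contraction property then gives $|f^i(J_0)|>2|J_0|$.

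I plan to iterate this doubling along a cycle of critical points. Starting from the endpoint $c'$ of $f^i(J_0)$, which lies in $\omega(C)$, the hypothesis $r_n(f(c'))\to 0$ allows me to construct an analogous ``wandering'' interval $J'$ near $c'$ with the same structural property, to which non-contraction applies and yields another factor-$2$ expansion. By finiteness of $C$, the chain of critical points $c,c',c'',\ldots$ produced this way must revisit some previous value within $|C|+1$ steps, yielding a closed loop of dynamical constructions; iterating this loop produces intervals whose sizes grow at least geometrically, eventually exceeding $|I|=1$ — the desired contradiction. The secondary case $i_k\to\infty$ reduces to the main case by applying the $r_n(f(c'))\to 0$ hypothesis to the sequence $n_k-1-i_k$, reducing the analysis to a new main-case scenario with $c'$ in the role of $c$.

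The main obstacle is the iteration step: at each stage one must verify that the constructed interval lies in $B(C,\delta_0)\setminus C$ at an appropriate scale, and that its boundary critical point has an orbit genuinely returning to $C$ in finite time, so that the non-contraction property can be reapplied. This relies on the recurrent structure imposed by $\omega(C)\supset U$ and the finiteness of $C$, which together force the necessary cyclic structure; making these choices quantitatively compatible (shrinking $J_0$ by a factor governed by the expected number of loop iterations) is the delicate part of the argument.
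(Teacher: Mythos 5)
There is a genuine gap, and it is fatal to the strategy rather than a fixable detail: the claimed contradiction via compounding factor-$2$ expansions cannot be reached. The non-contraction property only yields $|f^j(J_0)|>2|J_0|$ for the \emph{first} time $j$ at which $f^j(J_0)$ meets $C$; once $f^i(J_0)$ has $c'$ in its closure it is no longer a subinterval of $B(C,\delta_0)\setminus C$, so the property cannot be reapplied to it, and the next doubling you invoke is applied to a \emph{freshly chosen} small interval $J_0'$ near $c'$ that bears no containment relation to $f^i(J_0)$. Successive doublings of unrelated intervals do not compound, so "iterating the loop" never produces a single interval of size exceeding $1$. Worse, your own setup shows the intended growth is false: since $J_0\subset J=[f(c),b]$ and $f^{n_k-1}|_J$ is monotone with image of length $r_{n_k-1}(f(c))\to 0$, you have $|f^{n_k-1}(J_0)|\to 0$ along your subsequence, so the forward images of $J_0$ shrink rather than blow up. (There are also smaller unaddressed points: why the interval of $\omega(C)$ may be taken inside $B(C,\delta_0)\setminus C$; why $c'\in\omega(C)$; why the orbit of an endpoint of $f^i(J_0)$ ever returns to $C$ so that non-contraction can be invoked again — if neither $\orb(c')$ nor $\orb(f(c))$ meets $C$ again, there is no "closed loop" of critical points at all; and the case where $n_k-1-i_k$ stays bounded in your secondary case.)

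The salvageable part of your construction is the homterval $J=[f(c),b]$ itself: an interval on which every $f^m$ is monotone and whose images shrink along $(n_k)$ is morally a wandering interval, and the correct endgame is to exploit that, not expansion. This is essentially what the paper does, with different machinery: it reduces to positive entropy and no periodic cycles of intervals, uses horseshoes and the Cantor--Bendixson theorem to produce a perfect set $X'_\delta$ of points whose orbits stay $\delta$-away from $C$ and which meets the putative interval in $\omega(C)$ in two points $x,y$; then, using $r_n(f(c))<\delta/2$ together with non-contraction, it shows the critical orbit $f^{1+k+j}(c)$ shadows $\orb(x)$ within $\delta/2$ forever, so $[x,f^{1+k}(c)]$ is a wandering interval whose preimages form a dense set disjoint from $\omega(C)$ — whence $\omega(C)$ is nowhere dense. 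Note that there the non-contraction property is used only to guarantee that the monotone branch images keep returning to a $\delta$-neighbourhood of $C$ (so the shadowing argument can be iterated), not to build up length.
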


\begin{proof}
We can assume that $f$ has no periodic cycles of intervals other than the whole space, because otherwise we 
restrict $f$ to this cycle and consider the first return map to one of the intervals in the cycle.
 We can also assume that $f$ has positive topological entropy, because if $h_{top(f)} = 0$, 
 then all $\omega$-limit sets are finite or Cantor sets, see \cite{S,BS}.
 One characterisation of positive entropy is that $I$ contains horseshoes, 
 i.e., invariant compact sets $H$ such that $f^n:H \to H$ is a horseshoe map,
where $\frac1n h_{top}(f^n|H)$ is arbitrarily close to  $h_{top}(f)$, see \cite{MSz}.
 If $H \cap C \neq \emptyset$, then we can find a subset $H'$ bounded away from $C$, on which $f^{n'}$ acts as
 a horseshoe for some $n' \geq n$, so for every neighbourhood $U \owns c$, there is always a compact $f$-invariant set
 that intersects $U$ in a Cantor set but that is bounded away from $C$.

	Define $X_\delta = \{ x \in [0,1] : d(\orb(x), C) \geq \delta\}$ and 
	$X'_\delta \subset X_\delta$ is the perfect subset that the Cantor-Bendixson Theorem
	supplies\footnote{The Cantor-Bendixson Theorem \cite{Kechris} states that in a Polish space, every closed set $X$ is
		the disjoint union of a perfect set and a countable set (and this perfect set is nonempty if $X$ is uncountable).
		In our case, $X_\delta$ could have isolated points, for example if there is an orientation reversing periodic 
		point at distance $\delta$ from $C$, but we can simply remove them.},
		this latter set must contain a horseshoe, and in particular be nonempty, for small
	positive $\delta$. 
	
	Assume by contradiction that $\omega(C)$ contains intervals $J$. Then the interior of $\omega(C)$
	intersects $X'_\delta$ for $\delta$ sufficiently small. Indeed, if $J$ is such an interval, then
	$f^k(J) \owns c$ for some minimal $k$ and some $c \in C$.
	Therefore $f^k(J)$ intersects some horseshoe $H$, say with $d(H, C) = \delta$.
	But then there are points $x,y \in J$ such that $f^k(x), f^k(y) \in H$ 
	which implies that $x,y \in X'_\delta$.
	We additionally take $\delta < \delta_0/2$ for the $\delta_0$ in the non-contraction property.
	
	By our assumption that $r_n(f(c)) \to 0$ for all $c \in C$, there is $N \in \N$ such that $r_n(f(c)) < \delta/2$
	for all $n \geq N$ and $c \in C$. 
	Thus, since $X'_\delta$ has no isolated points, 
	there are distinct $x,y \in X'_\delta$ such that $[x,y] \subset \omega(C)$ and 
	$\cup_{j=0}^N f^j(C) \cap [x,y] = \emptyset$.
	
	Let $k \geq N$ minimal, $c \in C$ be such that $f^{1+k}(c) \in (x,y)$.
	Then $M_k(f(c)) \supset [x,y]$ and $M_{k'}(f(c)) \supset (c'-\delta, c'+\delta)$ for 
	$k' = \min\{ j \geq k : M_j(f(c)) \cap C \neq \emptyset\}$ and some $c' \in C$.
	
	Because $r_{k'}(f(c)) < \delta/2$, the component of $M_{k'}(f(c)) \setminus \{ f^{1+k'}(c) \}$
	intersecting $C$ has length $> \delta/2$ while the other component contains one of
	$f^{k'-k}(x)$ or $f^{k'-k}(y)$, say the first. Let $k'' = \min\{ j > k' : M_j(f(c)) \cap C \neq \emptyset\}$.
	By the non-contraction property, $|M_{k''}(f(c))| > \delta$, and we can repeat the argument.
	In particular, $d(f^{k''-k}(x), f^{1+k''}(c)) < r_{k''}(f(c)) < \delta/2$.
	But if we can do this indefinitely, $d(f^j(x), f^{1+k+j}(c)) < \delta/2$ for all $j \geq 0$ and
	$J := [x, f^{1+k}(c)]$ is in fact a wandering interval, and so $\omega(C)$ is disjoint 
	from $\cup_{n \geq 0} f^{-n}(J)$. If $I \setminus\cup_{n \geq 0} f^{-n}(J)$ contains an interval, then this interval
	must be preperiodic, so there is a periodic interval other than the whole space which was excluded at the beginning of the proof. Therefore, $\omega(C)$ is nowhere dense. 
\end{proof}

\begin{theorem}\label{thm:per}
Let $f: I\to I$ satisfy the non-contraction property.
The map $f$ is non-retractable along $\omega(C)$ if and only if $r_n(f(c)) \to 0$ for all $c\in C$. 
\end{theorem}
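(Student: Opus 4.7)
The plan is to prove each direction by contrapositive.

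For $(\Rightarrow)$, suppose that $r_{n_j}(f(c)) \geq \delta > 0$ along a subsequence $n_j \to \infty$ for some $c \in C$; I construct a backward orbit in $\omega(C)$ witnessing retractability. After passing to a subsubsequence, set $y := \lim_j f^{n_j + 1}(c) \in \omega(C)$; by hypothesis $V := (y - \delta/2, y + \delta/2)$ is contained in $M_{n_j}(f(c))$ (the image of the maximal monotone interval of $f^{n_j}$ around $f(c)$) for large $j$, so $V$ pulls back monotonically to an interval $W_j$ containing a unique preimage $z^{(j)}_{n_j}$ of $y$. Set $z^{(j)}_k := f^{n_j - k}(z^{(j)}_{n_j})$; then $f^k(z^{(j)}_k) = y$ and $(z^{(j)}_0 = y, z^{(j)}_1, \ldots, z^{(j)}_{n_j})$ is a length-$n_j$ backward orbit of $y$. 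Since $f^{-k}(y)$ is finite by piecewise monotonicity, a pigeonhole and diagonal argument extract a subsequence along which each $z^{(j)}_k$ is eventually constant at some $z_k$, giving an infinite backward orbit $(z_0 = y, z_1, z_2, \ldots)$. The monotone pull-back of $V$ along this orbit exists since the branch of $f^{-k}$ sending $y$ to $z_k$ is uniquely determined once $z_k$ is fixed, and $z_k \in \omega(C)$ follows because $z^{(j)}_k$ and the critical-orbit point $f^{n_j + 1 - k}(c)$ share the same monotone branch of $f^k$ with images $y$ and $f^{n_j + 1}(c) \to y$; passing to the limit identifies $z_k$ with an accumulation point of $\orb(c)$.

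For $(\Leftarrow)$, suppose $f$ is retractable via a backward orbit $(x_n) \subset \omega(C)$ and $U = (x_0 - \delta, x_0 + \delta) \subset (0,1)$ with monotone pull-back $U_n$; I exhibit $c \in C$ with $r_n(f(c)) \not\to 0$. Write $V_n \supseteq U_n$ for the maximal monotone domain of $f^n$ around $x_n$, so $f^n(V_n) = [\alpha_n, \beta_n] \supseteq U$. Assuming the left endpoint $a_n$ of $V_n$ lies in $(0,1)$ (the boundary cases reduce to fixed-point structure), $a_n$ is a critical point of $f^n$: letting $i_n := \min\{i \geq 0 : f^i(a_n) \in C\}$, $c_n := f^{i_n}(a_n) \in C$, and $m_n := n - i_n \geq 1$, we obtain $\alpha_n = f^{m_n}(c_n)$. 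Pass to a subsequence where $c_n = c$ is constant. Because the maximal monotone domain of $f^{m_n - 1}$ around $f(c)$ contains $f^{i_n + 1}(V_n)$ (which has $f(c)$ as an endpoint), its image $M_{m_n - 1}(f(c)) =: [\gamma_n, \delta_n]$ contains $[\alpha_n, \beta_n] \supseteq U$; consequently $\delta_n - \alpha_n \geq 2\delta$ automatically, and the task reduces to showing $\alpha_n - \gamma_n \geq \eta > 0$ uniformly in $n$. Here the non-contraction property is invoked: the left extension $[L_n, f(c)]$ of the monotone domain past $f(c)$ has its forward iterates visit $B(C, \delta_0)$ before finally hitting $C$ at time $m_n - 1$, and chaining the doubling estimate along these visits delivers the uniform lower bound on the image length $\alpha_n - \gamma_n$.

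The main obstacle is the uniform bound $\alpha_n - \gamma_n \geq \eta$: converting the local non-contraction hypothesis into a global lower bound on the length of a (possibly very long) monotone iterate requires careful bookkeeping of the returns of $[L_n, f(c)]$ to $B(C, \delta_0)$, coordinated with the dichotomy between $m_n$ bounded (where $\alpha_n$ takes finitely many values and the argument is immediate) and $m_n \to \infty$. If the left-endpoint analysis fails for a given subsequence, a symmetric argument using the right endpoint $b_n$ of $V_n$ and a corresponding critical point substitutes for it, since the roles of $\alpha_n$ and $\beta_n$ in the above reasoning are interchangeable.
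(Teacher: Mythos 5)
Your $(\Rightarrow)$ direction is essentially the paper's argument: argue by contrapositive, extract an accumulation point $y$ of $(f^{1+n_j}(c))_j$, and pull a $\delta/2$-neighbourhood of $y$ back monotonically along a backward orbit in $\omega(C)$ obtained by a limiting procedure. Your pigeonhole/diagonal extraction of the exact preimages $z_k$ of $y$ is a more explicit version of what the paper leaves implicit, and the one step you state too quickly --- that $z_k$ is an accumulation point of $\orb(c)$ --- does go through: $z_k$ and $f^{n_j+1-k}(c)$ lie in a common injectivity interval of $f^k$, and a monotone map with no flat pieces cannot send a nondegenerate interval $[z_k,w]$ to the single value $y$, so every accumulation point $w$ of $(f^{n_j+1-k}(c))_j$ with $f^k(w)=y$ must coincide with $z_k$. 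This half is fine.

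The $(\Leftarrow)$ direction has a genuine gap. Extracting the critical point $c$ from the boundary $a_n$ of the maximal monotone domain $V_n$ places the critical value $f^{m_n}(c)=\alpha_n$ at an \emph{endpoint} of the controlled image $[\alpha_n,\beta_n]\supseteq U$, so only one side of $M_{m_n-1}(f(c))$ around $f^{m_n}(c)$ is bounded below, while $r_{m_n-1}(f(c))$ is the \emph{minimum} over both sides. The missing bound $\alpha_n-\gamma_n\geq\eta$ cannot be produced by chaining the non-contraction doubling: that property only forces a small interval in $B(C,\delta_0)\setminus C$ to grow until its next visit to $C$, and gives no lower bound on the image of the far half of the branch at the specific time $m_n-1$. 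Indeed $\alpha_n-\gamma_n=0$ whenever $\orb(f(c))$ meets $C$, and it can be arbitrarily small infinitely often otherwise --- smallness of one side of $M_n(f(c))$ is exactly the content of $r_n(f(c))\to 0$, which may well hold for the particular $c$ your construction selects, since the theorem only asserts failure for \emph{some} critical point. Switching to the right endpoint $b_n$ yields a second critical point with the symmetric one-sided defect, and nothing guarantees that at least one of the two works. The paper circumvents this entirely: using Lemma~\ref{non-dense} (which is where the non-contraction property actually enters) it chooses closed buffer intervals $U^-,U^+$ on \emph{both} sides of $x_0$ inside $U$, disjoint from $\omega(C)$ and from the forward critical orbits, and then takes the \emph{first} time $1+k$ at which any critical orbit enters the middle piece $U_n'$. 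Minimality of $k$ forces $M_k(f(c))\supseteq U_n^-\cup U_n'\cup U_n^+$, hence $M_{n+k}(f(c))\supseteq U^-\cup U'\cup U^+$ with $f^{1+n+k}(c)$ landing in the middle piece $U'$, which gives the two-sided estimate $r_{n+k}(f(c))\geq\min\{|U^-|,|U^+|\}$ at once. To repair your argument you would need both of these devices: two-sided buffers avoiding $\overline{\orb(C)}$, and first-entry minimality in place of the boundary analysis of $V_n$.
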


\begin{proof}
	($\Rightarrow$) This is basically the same proof as the first half of Lemma~\ref{lem:closureper}.
	Indeed, suppose there is $c \in C$ and $\delta > 0$ such that $r_n(f(c)) > \delta$ infinitely often.
	Hence there is a sequence $(n_k)$ such that $n_{k+1} - n_k \to \infty$,
	$f^{1+n_k}(c) \to x$ and $r_{n_k}(f(c)) > \delta/2$. In particular, $x \in \omega(C)$ and
	so is its forward orbit.
	Therefore, we can pull $U := (x-\delta/2,x+\delta/2)$,
	back along the orbit $f^{n_{k+1}-n_k}(x), f^{n_{k+1}-n_k-1}(x), \dots,
	x$ in a monotone way.
	Since $n_{k+1}-n_k \to \infty$, we can find an infinite path in $\omega(C)$ along which
	$U$ can be pulled back along an orbit in $\omega(C)$ indefinitely, contrary to $f$ being non-retractable.\\[2mm]
	($\Leftarrow$) Assume by contradiction that there is an open set $U$ that can be pulled back indefinitely
	along a backward orbit $(x_n)_{n \geq 0}$ and $x_0$ is an interior point of $U$.
	Let $U^-$ and $U^+$ be two closed subintervals of $U$, one on either side of $x_0$, and both disjoint
	from $\omega(C)$. Since $\omega(C)$ contains no intervals (by Lemma~\ref{non-dense}), $U^+$ and $U^-$ can indeed be found, 
	and making them smaller if necessary, we can assume that they are also disjoint from the 
	forward critical orbits. Let $\delta := \min\{ |U^-|, |U^+|\} > 0$, and let
     $U'$ be the component of $U \setminus (U^+ \cup U^-)$
	that contains $x_0$.
	
	For $n \in \N$ arbitrary, let $U_n$, $U'_n$, $U^-_n$ and $U^+_n$ be the $n$-th pullbacks of
	$U$, $U'$, $U^-$ and $U^+$ respectively. Since $x_n \in U'_n \cap \omega(C)$ there is $k \in \N$
	minimal such that $f^{1+k}(c) \in U'_n$ for some $c \in C$.
	But then $M_k(f(c)) \supset U^-_n \cup U'_n \cup U^+_n$ and $M_{k+n}(f(c)) \supset U^- \cup U' \cup U^+$,
	whence $r_{n+k}(f(c)) \geq \delta$.
	Since $n$ is arbitrary,  $r_n(f(c)) \not\to 0$.
\end{proof}

\section{Characterization of arcs in inverse limits of piecewise monotone maps}\label{appB}

The characterization of $L$-endpoints in $\underleftarrow{\lim}\{I, f\}$ (and furthermore, the characterization of $f$ for which every $L$-endpoint in $\underleftarrow{\lim}\{I, f\}$ is an endpoint) requires an answer to the following problem:

\begin{problem}
	Characterize $f_i\colon I\to I$ such that $\underleftarrow{\lim}\{I, f_i\}$ is an arc.
\end{problem}

In this section we assume that $f\colon I\to I$ is a continuous surjection with finitely many critical points $\{0=c_1, c_2, \ldots, c_{n-1}, c_n=1\}=:C$ and we only study inverse limits $\underleftarrow{\lim}\{I, f\}$ with a single bonding map $f$. Thus we answer the above problem partially. 

We say that a point $x\in I$ is {\em $f$-attracted} to $y\in I$ if $f^n(x)\to y$ as $n\to\infty$. 
The set of fixed points of $f$ will be denoted by $\Fix(f)=\{y\in I: f(y)=y\}$. First we prove two simple lemmas that we use later when establishing the mentioned characterization. 

\begin{lemma}\label{lem:archomeo}
	Let $g\colon I\to I$ be a homeomorphism such that $g(0)=0$, $g(1)=1$. Then every point of $I$ is $g$-attracted to a point in $\Fix(g)$. Moreover, every point in a connected component of $I\setminus \Fix(g)$ is $g$-attracted to the same point in $\Fix(g)$.
\end{lemma}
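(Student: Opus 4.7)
The plan is to exploit the rigidity of homeomorphisms of $I$ fixing the endpoints: such a $g$ must be strictly increasing, and then everything reduces to one-dimensional monotone dynamics on the complement of $\Fix(g)$.

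First I would observe that since $g\colon I\to I$ is a homeomorphism with $g(0)=0$ and $g(1)=1$, it is strictly monotone, and the boundary conditions force $g$ to be strictly increasing. The set $\Fix(g)$ is closed and contains $\{0,1\}$, so $I\setminus \Fix(g)$ is a (possibly empty, countable) disjoint union of open intervals of the form $(a,b)$ with $a,b\in \Fix(g)$. On each such component, the continuous function $x\mapsto g(x)-x$ is nonzero, hence of constant sign by the intermediate value theorem.

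Next I would handle one component $(a,b)$ at a time. Suppose $g(x)>x$ for all $x\in(a,b)$. Because $g$ is increasing and $g(b)=b$, the orbit $(g^n(x))_{n\ge 0}$ is strictly increasing and bounded above by $b$, hence converges to some limit $L\in(a,b]$. Continuity of $g$ yields $g(L)=L$, i.e.\ $L\in\Fix(g)$; since $(a,b)$ contains no fixed points, we must have $L=b$. The symmetric argument, using that $(g^n(x))$ is strictly decreasing and bounded below by $a$, shows that if $g(x)<x$ on $(a,b)$ then $g^n(x)\to a$ for every $x\in(a,b)$. In both cases every point of the component is $g$-attracted to the same endpoint of the component, which is a fixed point. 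Finally, points of $\Fix(g)$ are trivially $g$-attracted to themselves, so every point of $I$ is $g$-attracted to a point of $\Fix(g)$, which proves both assertions.

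There is no real obstacle here: the only point requiring a little care is noting that the boundary conditions $g(0)=0$, $g(1)=1$ rule out the orientation-reversing case (which would otherwise permit $2$-periodic behavior), after which the argument is the standard monotone convergence on each gap of $\Fix(g)$.
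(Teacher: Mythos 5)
Your proof is correct and follows essentially the same route as the paper's: decompose $I\setminus\Fix(g)$ into open components, note that $g(x)-x$ has constant sign on each, and use monotone convergence of orbits to the appropriate fixed endpoint. Your write-up is slightly more detailed (explicitly ruling out the orientation-reversing case and verifying the limit is a fixed point), but the argument is the same.
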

\begin{proof}
	First note that $\Fix(g)$ is closed, since $I$ is closed. So, for every $x\not\in \Fix(g)$ there exist $x_1, x_2\in \Fix(g)$ such that $x_1<x<x_2$ and $(x_1,x_2)\cap \Fix(g)=\emptyset$. Since $g$ is a homeomorphism, if $g(x)<x$, then $x_1<g(y)<y$ for all $y\in(x_1, x_2)$, so $g^n(y)\to x_1$. Analogously if $g(x)>x$ we conclude that $g^n(y)\to x_2$ for all $y\in (x_1, x_2)$.
\end{proof}

For the simplicity of phrasing the following proof we introduce the following notation: given a map $f\colon I\to I$ and $[a,b]\subset I$, 
we say that $f|_{[a,b]}$ is {\em virtually increasing} if $f([a,b])=[f(a),f(b)]$, $f(a)<f(b)$ and $f(x)\not\in\{f(a),f(b)\}$ for every $x\in(a,b)$.

\begin{lemma}\label{lem:surjint}
	Let $f\colon I\to I$ be a map. For every $[x,y]\subset I$ there exists an interval $[x',y']\subset I$ such that $f^2([x',y'])=[x,y]$ and $f^2|_{[x',y']}$ is virtually increasing.
\end{lemma}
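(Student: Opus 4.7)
The plan is to build $[x',y']$ as a twice-iterated virtually monotone preimage of $[x,y]$ under $f$. The degenerate case $x=y$ is handled by taking $[x',y']$ to be any singleton in the nonempty set $(f^2)^{-1}(x)$ (nonempty since $f$, and hence $f^2$, is surjective), so I assume $x<y$ from now on.

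The main tool is a standard IVT sublemma. If $g\colon[p,q]\to I$ is continuous with $g(p)<u<v<g(q)$, then setting
\[
b=\inf\{t\in[p,q]: g(t)=v\}, \qquad a=\sup\{t\in[p,b]: g(t)=u\},
\]
gives $a<b$ with $g(a)=u$, $g(b)=v$, and $g(t)\in(u,v)$ for $t\in(a,b)$, so $g|_{[a,b]}$ is virtually increasing onto $[u,v]$. The symmetric case $g(p)>v>u>g(q)$, handled by swapping the roles of $\inf$ and $\sup$, yields a virtually decreasing preimage. Both cases are immediate from IVT and the extremal choices of $a,b$.

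To apply this, I use surjectivity of $f$: pick $u_0\in f^{-1}(0)$ and $u_1\in f^{-1}(1)$ (both nonempty) and let $I^\ast$ be the closed interval with endpoints $u_0$ and $u_1$. Since $f$ is continuous on $I^\ast$ and attains both $0$ and $1$ at its boundary, IVT gives $f(I^\ast)=I$. Applying the sublemma to $f|_{I^\ast}$ with target $[x,y]$ produces $[a,b]\subset I^\ast$ with $f([a,b])=[x,y]$ virtually monotone, and applying it again with target $[a,b]\subset I^\ast$ produces $[x',y']\subset I^\ast$ with $f([x',y'])=[a,b]$ virtually monotone. Because both applications take place inside the same $I^\ast$, the orientations are both controlled by the sign of $u_1-u_0$ and so they agree.

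Finally, the composition of two virtually increasing maps (or two virtually decreasing maps) is virtually increasing: in both cases $f^2(x')=x$ and $f^2(y')=y$, and for $t\in(x',y')$ one has $f(t)\in(a,b)$, hence $f^2(t)=f(f(t))\in f((a,b))\subset(x,y)$, so the interior-value condition holds. The main conceptual issue, and essentially the only thing that could go wrong, is an orientation mismatch between the two applications, which would yield virtually decreasing $f^2$ instead; confining both preimages to the single interval $I^\ast$ is exactly what rules this out. Minor cosmetic adjustments to the $\sup/\inf$ definitions handle the edge cases $x=0$ or $y=1$, where the sublemma's strict boundary inequalities become equalities.
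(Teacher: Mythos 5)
Your proof is correct and follows essentially the same route as the paper's: an IVT-based extremal choice of preimages producing a virtually monotone branch of $f$ over the target interval, applied twice, with the orientation of both pullbacks governed by the relative order of a preimage of $0$ and a preimage of $1$. The only cosmetic difference is that in the ``decreasing'' case the paper passes to $f^2$ (using $f^2(c)=0$, $f^2(d)=1$ for some $c<d$) and performs a single extremal pullback for $f^2$, whereas you compose two virtually decreasing branches of $f$; these amount to the same computation.
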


\begin{proof}
	Fix $[x,y]\subset I$.
	Assume first there are $c<d\in I$ such that $f(c)=0, f(d)=1$. If there are more such points $c<d$, choose so that $[c,d]$ is minimal with $f(c)=0, f(d)=1$. Then define $a=\sup\{z\in [c,d]: f(z)=x, y\not\in f([0,z])\}$, and  
	$b=\inf\{z\in [c,d]: f(z)=y, x\not\in f([z,1])\}$. Since $f$ is a continuous surjection, $a<b$ are well defined,  $f([a,b])=[x,y]$,
	and $f|_{[a,b]}$ is virtually increasing. Using the same procedure we find $x',y'$ such that $f([x',y'])=[a,b]$, and $f|_{[x',y']}$ is virtually increasing, which finishes the proof in this case. 
	
	In the other case, due to surjectivity of $f$, note that there are $c'<d'\in I$ such that $f(c')=1, f(d')=0$, and thus also $c<d$ such that $f(c)=d'$, $f(d)=c'$. But then we have $f^2(c)=0$, $f^2(d)=1$ and the proof again follows analogously.
\end{proof}

A point $x\in K$ in a continuum $K$ is called a {\em cut point}, if $K\setminus \{x\}$ is not connected. Otherwise, $x$ is called a {\em non-cut} point.
We will use the following topological characterisation of arc. A {\em topological ray} is a continuous one-to-one image of $[0,1)$.

\begin{proposition}[Theorem 6.17 in \cite{Na}]\label{prop:Nadarcchar}
	A continuum $K$ is an arc if and only if $K$ has exactly two non-cut points.
\end{proposition}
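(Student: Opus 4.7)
The forward direction is immediate: if $K$ is an arc, then $K$ is homeomorphic to $[0,1]$, so the two endpoints are the only non-cut points. All work goes into proving the converse, so I will focus on that: assume $K$ is a continuum with exactly two non-cut points $a,b$ and show that $K$ is an arc.

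The plan is to endow $K$ with a natural linear order and then show that the order topology coincides with the metric topology, from which the conclusion follows by a standard characterization of order-continua homeomorphic to $[0,1]$. The order is defined as follows: declare $a\preceq a$, $b$ the maximum, and for two cut points $p,q$, set $p\preceq q$ if and only if $p$ separates $a$ from $q$ in $K$ (equivalently, $q$ separates $p$ from $b$). To make this sensible I would first establish the classical fact that, in any continuum, the complement of a cut point has exactly two quasi-components and these are the two components $K_a(p), K_b(p)$ containing $a$ and $b$ respectively. Once the components are in hand, the relation $p\preceq q \Longleftrightarrow p\in \overline{K_a(q)}$ can be checked to be reflexive, antisymmetric, transitive, and total. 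Transitivity, which is the most delicate of these, comes from the observation that if $p$ separates $a$ from $q$ and $q$ separates $a$ from $r$, then $p\in K_a(r)$ by a connectedness argument applied to the subcontinua cut out by $p$ and $q$.

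Next I would show that $K$ is order-dense (between any $p\prec q$ there is $r$ with $p\prec r\prec q$) and order-complete. Density is obtained by taking a cut point $r$ in $K_b(p)\cap K_a(q)$, which is nonempty and open since it is a connected open set in $K\setminus\{p,q\}$ containing no non-cut points other than possibly $a$ or $b$ (neither of which lies in this set). Order-completeness follows from compactness of $K$ and the fact that for any chain $p_n \preceq p_{n+1}$ the nested intersection of the closed ``initial segments'' $\overline{K_a(p_n)} \cup \{p_n\}$ is a nonempty continuum whose ``supremum point'' in $\preceq$ is forced to exist because $K$ has no other non-cut points to obstruct it.

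The main obstacle is to check that the order topology on $K$ agrees with the metric topology, which amounts to showing that the basic order-open sets of the form $\{r : p \prec r \prec q\}$ form a basis for the original topology. For this I would use that for a cut point $p$ the two pieces $K_a(p)$ and $K_b(p)$ are open in $K\setminus\{p\}$ and hence in $K$ once we add or remove $p$ appropriately, combined with the easy fact that order-open sets are open in any order-continuous map; conversely, for any open $U\ni r$ with $r$ a cut point, one finds cut points $p\in K_a(r)\cap U$ and $q\in K_b(r)\cap U$ arbitrarily close to $r$ by density, giving an order-interval around $r$ contained in $U$. Having matched the topologies, $K$ is a compact connected linearly ordered space with smallest element $a$, largest element $b$, dense order and the least-upper-bound property, so $K$ is order-isomorphic (and therefore homeomorphic) to $[0,1]$, proving $K$ is an arc.
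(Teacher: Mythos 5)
First, a caveat about the comparison: the paper does not prove this proposition at all. It is a classical theorem imported verbatim from Nadler's book (Theorem 6.17 in \cite{Na}), so there is no in-paper argument to measure you against. Your outline is, in substance, the standard textbook route to that classical result -- essentially the one Nadler gives -- built on the separation order ($p\preceq q$ iff $p$ separates $a$ from $q$) and the identification of $K$ with an order-complete, densely ordered linear continuum with endpoints.

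Within that outline there are a few genuine gaps to repair. First, the ``classical fact'' as you state it -- that in \emph{any} continuum the complement of a cut point has exactly two quasi-components -- is false: the complement of the branch point of a simple triod has three components. The correct statement requires the hypothesis that $K$ has exactly two non-cut points, and its proof rests on the non-cut point existence theorem (every nondegenerate continuum has at least two non-cut points, whence each piece of any separation of $K\setminus\{p\}$ contains one of $a,b$); that theorem is itself nontrivial and must be invoked explicitly. Second, your argument that the metric topology refines the order topology does not work as written: choosing cut points $p\in K_a(r)\cap U$ and $q\in K_b(r)\cap U$ metrically close to $r$ does not force the order-interval $K_b(p)\cap K_a(q)$ to lie in $U$. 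The clean fix is to observe that, once every order-open set is shown to be metrically open, the identity map from the compact metric space $K$ to $K$ with the (Hausdorff) order topology is a continuous bijection, hence a homeomorphism. Third, the final invocation needs separability of $K$ (automatic for a metric continuum, but it must be used): a compact, connected, densely and completely ordered space with endpoints need not be $[0,1]$ -- the lexicographically ordered unit square is a counterexample -- so the countable order-dense subset has to enter the argument. The order-completeness step is also only gestured at, though it is standard. None of these defects is fatal; they are precisely the points where the textbook proof spends its effort.
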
 

Furthermore, we will also use the following well-known theorem of Bennett.

\begin{proposition}[\cite{Ben} and Theorem 19 in \cite{InMah}]\label{prop:Ben}
	Suppose $f:I\to I$ and there is a point $\zeta\in (0,1)$ so that:
	\begin{enumerate}
		\item $f([\zeta, 1])\subseteq [\zeta, 1]$,
		\item $f|_{[0, \zeta]}$ is monotone,
		\item There is $j\in \N$ such that $f^j([0, \zeta])=I$.
	\end{enumerate}
	Then $\underleftarrow{\lim}(I, f)$ is the closure of a topological ray having for a remainder the continuum
	$\underleftarrow{\lim}([\zeta,1], f|_{[\zeta,1]})$.
\end{proposition}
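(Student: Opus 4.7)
The plan is to realize $X \setminus Y$ explicitly as a topological ray $R$ and verify that its closure in $X$ adds precisely $Y$. I would first use the hypotheses (together with the paper's convention that maps are surjective) to pin down the behaviour of $f$ near $\zeta$: the map $f|_{[0,\zeta]}$ must be monotone \emph{increasing}, since the decreasing case would force $f([0,\zeta]) \subseteq [\zeta,1]$ by (1) and hence inductively $f^k([0,\zeta]) \subseteq [\zeta,1]$ for all $k \geq 1$, contradicting (3). Surjectivity of $f$ combined with (1) forces $f(0) = 0$, and if $f(\zeta) = \zeta$ then $[0,\zeta]$ would be $f$-invariant, again contradicting (3); hence $f(\zeta) > \zeta$. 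Let $g := (f|_{[0,\zeta]})^{-1} : [0, f(\zeta)] \to [0, \zeta]$, an increasing homeomorphism with $g(0) = 0$ and $\alpha := g(\zeta) \in (0, \zeta)$.

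Next, I would embed $Y$ into $X$ as the closed subcontinuum $\{x \in X : \pi_i(x) \in [\zeta,1] \text{ for all } i\}$. The contrapositive of (1) shows that for every $x \in X \setminus Y$ the integer $N(x) := \min\{i : \pi_i(x) < \zeta\}$ is finite, that $\pi_k(x) < \zeta$ for all $k \geq N(x)$, and (by injectivity of $f|_{[0,\zeta]}$) that $\pi_{N(x)+k}(x) = g^k(\pi_{N(x)}(x))$. Partition $X \setminus Y = \bigsqcup_{N \geq 0} A_N$ with $A_N := \{x : N(x) = N\}$; parametrize $A_0$ via $t \in [0, \zeta)$ by $t \mapsto (t, g(t), g^2(t), \ldots)$, and $A_N$ for $N \geq 1$ via $s \in [\zeta, f(\zeta))$ by $s \mapsto (f^{N-1}(s), \ldots, f(s), s, g(s), g^2(s), \ldots)$ with $s$ placed in the $(N-1)$-th coordinate; each stratum is thus a half-open arc.

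A direct check then shows the open end of $A_N$ (the limit as $s \to f(\zeta)^{-}$) and the closed end of $A_{N+1}$ (the value at $s = \zeta$) coincide, both equal to $(f^N(\zeta), \ldots, f(\zeta), \zeta, \alpha, g(\alpha), \ldots)$. Concatenating yields a continuous bijection $\gamma : [0, \infty) \to R := X \setminus Y$ which, by a local argument using the fact that basic neighbourhoods in $X$ are determined by finitely many coordinates (each continuous in the parameter), is a topological embedding, so $R$ is a topological ray. Since $R \cap Y = \emptyset$ and $R \cup Y = X$, one immediately gets $\overline{R} \setminus R \subseteq Y$; for the reverse, fix $y \in Y$ and $M \geq 1$, use (3) to pick $w \in [0, \zeta]$ with $f^j(w) = y_{M-1}$, and define $x^{(M)} \in X$ by $\pi_i(x^{(M)}) = y_i$ for $i < M$, $\pi_{M-1+k}(x^{(M)}) = f^{j-k}(w)$ for $0 \leq k \leq j$, and $\pi_{M-1+j+k}(x^{(M)}) = g^k(w)$ for $k \geq 1$. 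A routine check confirms $x^{(M)} \in R$ with $N(x^{(M)}) \leq M + j$, and $x^{(M)} \to y$ in the product topology since the two sequences agree on the first $M$ coordinates.

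The main obstacle will be verifying that $\gamma$ is a genuine topological embedding rather than merely a continuous bijection. Inside each stratum $A_N$ this is transparent because the parameter equals a specific coordinate of $\gamma(t)$, so injectivity and continuity of $\gamma^{-1}$ follow from the continuity of $\pi_{N-1}$ (or of $\pi_0$ when $N = 0$); the subtlety lies at the gluing points, where one must confirm that whenever $\gamma(t_n) \to \gamma(t_0)$ in $X$ one has $t_n \to t_0$ in $[0,\infty)$, using the compatibility of coordinate projections across consecutive strata.
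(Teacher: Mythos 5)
The paper does not actually prove this proposition: it is imported from Bennett's thesis \cite{Ben} and Theorem 19 of \cite{InMah}, so there is no internal argument to compare yours against. Your construction is essentially the classical direct proof, and the details check out: the reduction to $f|_{[0,\zeta]}$ increasing with $f(0)=0$ and $f(\zeta)>\zeta$ is correct, the strata $A_N$ do partition $X\setminus Y$, the endpoint matching at $(f^N(\zeta),\dots,f(\zeta),\zeta,\alpha,g(\alpha),\dots)$ is right, and the approximating points $x^{(M)}$ give $Y\subset\overline{R}$, which is exactly where hypothesis (3) enters. On the embedding issue you flag as the main obstacle: the gluing points are in fact the easy part (on each $[0,M]$ you have a continuous bijection from a compactum, hence a homeomorphism onto its image, and coordinates $\pi_{N}$, $\pi_{N+1}$ separate the stratum $A_N\cup A_{N+1}$ from all others near a gluing point). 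The step that genuinely needs the structure of the strata is that the ray cannot accumulate on itself, i.e.\ that $\gamma(t_n)\to\gamma(t_0)$ is impossible when $t_n\to\infty$; this follows at once from your stratification, since a point of $A_{M}$ has its first $M$ coordinates in $[\zeta,1]$, so any limit of points with $M_n\to\infty$ lies in $Y$, which is disjoint from $R$. Say this explicitly.

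One caveat: your proof silently upgrades hypothesis (2) from ``monotone'' to ``strictly monotone''. The inverse $g=(f|_{[0,\zeta]})^{-1}$ and the identity $\pi_{N+k}(x)=g^{k}(\pi_{N}(x))$ require $f|_{[0,\zeta]}$ to be injective. In \cite{InMah} ``monotone'' is the continuum-theoretic notion (connected point-preimages), i.e.\ weakly monotone for interval maps, and a flat spot of $f$ inside $[0,\zeta]$ destroys your parametrization of the strata by a single coordinate (a fibre of $\pi_0|_{A_0}$ can then be a nondegenerate arc). The proposition remains true in that generality, but one must replace your explicit parametrization by the fact that an inverse limit of arcs with monotone surjective bonding maps is an arc. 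This does not affect the present paper: in both places where Proposition~\ref{prop:Ben} is invoked (Lemmas~\ref{lem:arc1} and~\ref{lem:arc2}) the bonding map is piecewise monotone and $[0,\zeta]$ is a lap on which $f$ is one-to-one, so the case you prove covers every use made of the statement here.
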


\begin{lemma}\label{lem:arc1}
	Assume that $f\colon I\to I$ is a piecewise monotone map such that $f(0)=0$, $f(1)=1$, $\Fix(f)=\{0,1\}$, and every point $x\in(0,1)$ is $f$-attracted to $1$. Then $\underleftarrow{\lim}(I, f)$ is an arc.
\end{lemma}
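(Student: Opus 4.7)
The plan is to exploit the attraction hypothesis to pin down the shape of $f$, and then to parametrize $\underleftarrow{\lim}(I,f)$ explicitly as an arc. First I would prove that $f(x)>x$ for every $x\in(0,1)$: if $f(x)<x$ for some such $x$, then since $\Fix(f)\cap(0,1)=\emptyset$ the forward orbit $(f^n(x))_n$ is strictly decreasing and converges to a fixed point strictly below $x$, which must therefore be $0$, contradicting $f$-attraction of $x$ to $1$. If $f$ has no critical points in $(0,1)$, then $f$ is an increasing homeomorphism of $I$ and $\underleftarrow{\lim}(I,f)\cong I$ is trivially an arc, so I may assume $c_2\in(0,1)$ is the smallest critical point greater than $0$. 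The inequality $f(x)>x$ combined with piecewise monotonicity then forces $f|_{[0,c_2]}$ to be an increasing homeomorphism onto $[0,f(c_2)]$ with $f(c_2)>c_2$, and yields $f([c_2,1])\subseteq(c_2,1]$.

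The crucial technical step, which is the main obstacle, is to show $\bigcap_{k\geq 0}f^k([c_2,1])=\{1\}$. The nested compact intervals $f^k([c_2,1])=[m_k,1]$ intersect in $[m_\infty,1]$ with $m_\infty:=\lim_k m_k$. If $m_\infty<1$, then by invariance $f([m_\infty,1])=[m_\infty,1]$, so some $y_*\in[m_\infty,1]$ realizes $f(y_*)=m_\infty$; since $y_*\ne m_\infty$ (otherwise $m_\infty\in\Fix(f)\cap(0,1)$) and $y_*\ne 1$ (since $f(1)=1\ne m_\infty$), $y_*$ is an interior local minimum of $f$, contradicting $f(y_*)>y_*$. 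Hence $m_\infty=1$; equivalently, using $f(x)>x$ to rule out infinite decreasing tails bounded away from $0$, the only point of $\underleftarrow{\lim}(I,f)$ with all coordinates in $(c_2,1]$ is $\mathbf 1:=(1,1,1,\ldots)$.

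Set $g:=(f|_{[0,c_2]})^{-1}\colon[0,f(c_2)]\to[0,c_2]$ and, for each $k\geq 0$, define
$$\phi_k(y):=\bigl(f^k(y),f^{k-1}(y),\dots,f(y),y,g(y),g^2(y),\dots\bigr),\qquad y\in[0,c_2],$$
with $B_k:=\phi_k([0,c_2])$. Because $f([c_2,1])\subseteq(c_2,1]$ gives $f^{-1}([0,c_2])\subseteq[0,c_2]$, each $\phi_k$ is a well-defined continuous injection into $\underleftarrow{\lim}(I,f)$, hence a homeomorphism onto the arc $B_k$. A direct check gives $B_{k-1}=\phi_k([0,g(c_2)])\subseteq B_k$, so the $B_k$ form a nested sequence of arcs with successive added pieces $\phi_k([g(c_2),c_2])$ glued to $B_{k-1}$ at the common endpoint $\phi_k(g(c_2))=\phi_{k-1}(c_2)$. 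By the previous paragraph, any point of $\underleftarrow{\lim}(I,f)$ lying outside every $B_k$ has all coordinates in $(c_2,1]$ and therefore equals $\mathbf 1$, whence $\underleftarrow{\lim}(I,f)=\bigcup_k B_k\cup\{\mathbf 1\}$.

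Finally I would assemble a homeomorphism $h\colon[0,1]\to\underleftarrow{\lim}(I,f)$ by choosing parameters $0=s_0<s_1<s_2<\cdots\nearrow 1$, sending $[0,s_1]$ homeomorphically onto $B_0$, $[s_k,s_{k+1}]$ homeomorphically onto $\phi_k([g(c_2),c_2])$ for $k\geq 1$, and $h(1):=\mathbf 1$. The pieces glue continuously along $\phi_k(c_2)=\phi_{k+1}(g(c_2))$. Continuity at $1$ follows from a diameter estimate: for $i<k$ the $i$-th coordinate of any point of $\phi_k([g(c_2),c_2])$ lies in $f^{k-i-1}([c_2,f(c_2)])\subseteq f^{k-i-1}([c_2,1])$, and the diameters of these sets tend uniformly to $0$ by the second paragraph, so $\phi_k([g(c_2),c_2])$ shrinks to $\mathbf 1$ in the product topology. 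Thus $h$ is a continuous bijection from the compact space $[0,1]$ onto the Hausdorff continuum $\underleftarrow{\lim}(I,f)$, hence a homeomorphism, and $\underleftarrow{\lim}(I,f)$ is an arc.
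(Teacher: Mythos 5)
Your proof is correct, but it takes a genuinely different route from the paper's. Both arguments rest on the same two structural facts --- that $f$ restricted to $[0,c_2]$ is an increasing homeomorphism whose forward images sweep out $[0,1)$, and that $\bigcap_k f^k([c_2,1])=\{1\}$ --- but they convert these facts into the conclusion differently. The paper invokes the non-cut-point characterization of the arc (Proposition~\ref{prop:Nadarcchar}): for any $x$ other than $(0,0,\ldots)$ and $(1,1,\ldots)$ it finds $N$ with $x_i\in(0,c)$ for all $i\geq N$ and splits the space as $\underleftarrow{\lim}([0,x_i],f)\cup\underleftarrow{\lim}([x_i,1],f)$, two subcontinua meeting only at $x$, so every such $x$ is a cut point and the space is an arc. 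You instead exhibit the space as an increasing union of arcs $B_k$ compactified by the single point $\mathbf 1$ and assemble an explicit homeomorphism with $[0,1]$ --- in effect a self-contained proof of the relevant case of Bennett's theorem (Proposition~\ref{prop:Ben}), which is exactly the alternative the paper mentions and declines to pursue. Your version costs more bookkeeping with $\phi_k$ and $g$ but is constructive and independent of Proposition~\ref{prop:Nadarcchar}; the paper's is shorter at the price of citing that characterization. Two steps you state a bit too quickly, though both are easily repaired: the claim that $f(x)>x$ on all of $(0,1)$ needs the remark that $f-\mathrm{id}$ is nonvanishing, hence of constant sign, on the connected set $(0,1)$ (this is what makes the orbit in your first paragraph monotone), and the invariance $f\bigl(\bigcap_k f^k([c_2,1])\bigr)=\bigcap_k f^k([c_2,1])$ used to produce $y_*$ requires the standard compactness argument for nested continua.
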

\begin{proof}
	We will show that $(0,0, \ldots)$ and $(1,1,\ldots)$ are the only two non-cut points of $\underleftarrow{\lim}(I, f)$. From Proposition~\ref{prop:Nadarcchar} it then follows that $\underleftarrow{\lim}(I, f)$ is an arc.
	
	Let $c>0$ be the minimal critical point of $f$ different than $0$ or $1$. Then $f|_{[0,c]}$ is one-to-one and since $f^n(c)\to 1$ it follows that $\cup_{n\in\N}f^n([0,c])\supset [0,1)$. Moreover, for every $x>0$ we have $f([x,1])\subsetneq [x,1]$. Thus $f([c,1])\subset [c,1]$ and $\cap_{n\in\N}f^n([c,1])=\{1\}$. At this point we could use a slight extension of Proposition~\ref{prop:Ben} to conclude that $\underleftarrow{\lim}(I, f)$ is a topological ray which limits on a point $\underleftarrow{\lim}([c,1], f|_{[c,1]})=\{(1,1,\ldots)\}$, but we will prove the theorem directly for completeness. Take any $x=(x_0,x_1, \ldots)\in\underleftarrow{\lim}(I, f)$ such that $x\neq (0,0,\ldots),(1,1, \ldots)$. Then there exists $N\geq 0$ such that $x_i\in (0,c)$ for all $i\geq N$. Then for all $i>N$ we have $f([0,x_i])=[0,x_{i-1}], f([x_i,1])=[x_{i-1},1]$ and thus $A=\underleftarrow{\lim}([0,x_i], f|_{[0,x_i]})$ and $B=\underleftarrow{\lim}([x_i, 1], f|_{[x_i,1]})$ are well-defined, non-degenerate subcontinua, such that $A\cup B=\underleftarrow{\lim}\{I,f\}$ and $A\cap B=\{x\}$, so $x$ is a cut point.
\end{proof}

\begin{lemma}\label{lem:arc2}
	Assume that $f\colon I\to I$ is a piecewise monotone map such that there exists $d\in(0,1)$ such that $\Fix(f)=\{0,d,1\}$. In addition assume that every $x\in (0,1)$ is $f$-attracted to $d$. Then $\underleftarrow{\lim}(I, f)$ is an arc.
\end{lemma}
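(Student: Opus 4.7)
The plan is to mirror the proof of Lemma~\ref{lem:arc1}: by Proposition~\ref{prop:Nadarcchar}, it suffices to show that $X := \underleftarrow{\lim}(I, f)$ has exactly two non-cut points, namely $p_0 = (0, 0, \ldots)$ and $p_1 = (1, 1, \ldots)$.

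I begin with the preliminaries. Since $\Fix(f) = \{0, d, 1\}$ and the continuous function $f(x) - x$ vanishes only on this set, it has constant sign on each of $(0, d)$ and $(d, 1)$; the hypothesis of attraction to $d$ forces $f(x) > x$ on $(0, d)$ and $f(x) < x$ on $(d, 1)$. In particular, $f^{-1}(0) = \{0\}$ and $f^{-1}(1) = \{1\}$, because any interior preimage of $0$ or $1$ would have a forward orbit tending to $0$ or $1$ rather than $d$; consequently every $x \in X \setminus \{p_0, p_1\}$ has $x_i \in (0, 1)$ for all sufficiently large $i$. Letting $c$ denote the smallest critical point of $f$ in $(0,d]$ (so $f|_{[0, c]}$ is strictly increasing), we have $f^n(c) \to d$ and hence $\bigcup_n f^n([0, c]) \supset [0, d)$; the same attraction obstruction guarantees $m := \min_{y \in [c, 1]} f(y) > 0$. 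These facts make small $\pi_0$-neighborhoods of $p_0$ homeomorphic to half-open intervals, so $p_0$ is a non-cut point, and the argument for $p_1$ is symmetric using the largest critical point in $[d,1)$.

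The main work is showing every other $x = (x_0, x_1, \ldots)$ is a cut point. I would first prove the subclaim that the backward orbit $(x_i)$ converges in $I$ to one of $\{0, d, 1\}$: accumulation points are $f$-fixed by continuity, and the sequence cannot oscillate between distinct fixed points, because the lower bound $m > 0$ implies the $f$-preimage of a point close enough to $0$ lies entirely in a small neighborhood of $0$ (and symmetrically near $1$). If $(x_i) \to 0$ (or $\to 1$), I transcribe the cut construction of Lemma~\ref{lem:arc1} verbatim: take $N$ large so that for $i > N$ we have $x_i < c$ and $x_{i-1} < m$; then $f([0, x_i]) = [0, x_{i-1}]$ and $f([x_i, 1]) = [x_{i-1}, 1]$, and the subcontinua $A = \underleftarrow{\lim}\{[0, x_i], f|_{[0, x_i]}\}$ and $B = \underleftarrow{\lim}\{[x_i, 1], f|_{[x_i, 1]}\}$ satisfy $A \cup B = X$ and $A \cap B = \{x\}$, exhibiting $x$ as a cut point.

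The principal obstacle is the case $(x_i) \to d$, in particular $x = p_d = (d,d,\ldots)$. Here the fiber $\pi_i^{-1}(d)$ can be a non-trivial set (typically a countable sequence converging to $p_d$), so the naive cut at level $x_i = d$ does not isolate $x$. I would handle this by decomposing $X \setminus \{p_d\}$ into two disjoint non-empty open subsets $X^-$ and $X^+$, corresponding respectively to those backward orbits with $(x_i) \to 0$ and those with $(x_i) \to 1$. Non-emptiness is clear ($p_0 \in X^-$, $p_1 \in X^+$), while openness follows from the bounds above (which make the convergence type of $(x_i)$ locally constant on $X \setminus \{p_d\}$). The fact that $X^- \cup X^+$ covers $X \setminus \{p_d\}$ requires the additional step of ruling out a non-$p_d$ backward orbit genuinely approaching $d$: since $d$ is attracting in forward time, $f^{-1}$ is locally expanding at $d$ away from the attracting direction, so any $(x_i)$ that stays arbitrarily close to $d$ for all large $i$ must in fact equal $d$ for all large $i$, and an inductive unfolding then forces $x = p_d$. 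This establishes $p_d$ as a cut point and completes the application of Proposition~\ref{prop:Nadarcchar}.
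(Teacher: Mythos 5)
Your route is genuinely different from the paper's: the paper never touches the non-cut-point criterion for this lemma, but instead writes $X=A\cup B$ with $A=\underleftarrow{\lim}([0,c_2],f|_{[0,c_2]})$ and $B=\underleftarrow{\lim}([c_1,1],f|_{[c_1,1]})$ (for $c_1$ the smallest and $c_2$ the largest critical point), argues that $A\cap B=\underleftarrow{\lim}([c_1,c_2],f|_{[c_1,c_2]})$ degenerates to the single point $(d,d,\ldots)$ because $\bigcap_n f^n([c_1,c_2])=\{d\}$, and then applies the Bennett-type ray argument of Lemma~\ref{lem:arc1} to each piece, so that $X$ is two arcs glued at a common endpoint. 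Most of your argument is carried out correctly: the half-open-arc neighbourhoods of $p_0$ and $p_1$, the level cut for backward orbits tending to $0$ or $1$ (which is indeed the argument of Lemma~\ref{lem:arc1}), and the separation $X\setminus\{p_d\}=X^-\sqcup X^+$ all work \emph{once your classification of backward orbits is in place}.

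That classification is where the genuine gap lies, and it is concentrated in two assertions you treat as obvious. First, ``accumulation points are $f$-fixed by continuity'' is not a consequence of continuity: the accumulation set $\alpha$ of a backward orbit is only a nonempty compact set with $f(\alpha)=\alpha$, and nothing you have said excludes, say, $\alpha$ being a nondegenerate interval $[l,r]\ni d$ with $f([l,r])=[l,r]$. Second, ``$f^{-1}$ is locally expanding at $d$ away from the attracting direction'' is a hyperbolicity heuristic that a merely topologically attracting fixed point of a continuous map need not satisfy; what you actually need is that the maximal fully invariant set inside a small neighbourhood of $d$ is $\{d\}$, i.e.\ that no nondegenerate interval $[l,r]\subset(0,1)$ with $l<d<r$ satisfies $f([l,r])=[l,r]$. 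That statement is true, but it requires a real argument: surjectivity of $f|_{[l,r]}$ forces points $w\in[l,d)$ and $w'\in(d,r]$ with $f(w)=r$ and $f(w')=l$, which yields the covering relations $[w,d]\to[d,w']\to[w,d]$ and hence (after an intermediate-value argument that separates the resulting fixed point of $f^2$ from $d$) a genuine period-two orbit, contradicting the hypothesis that every point of $(0,1)$ is attracted to $d$. Nothing of this sort appears in your proposal, and without it both the claim that $X^-\cup X^+$ covers $X\setminus\{p_d\}$ and the claim that $p_d$ is the only point with backward orbit accumulating on $d$ are unsupported. (It is only fair to note that the paper's own proof leans on essentially the same unproved fact when it asserts $\bigcap_n f^n([c_1,c_2])=\{d\}$; but your version of the argument needs it in a stronger form, since you must also control backward orbits that wander in $[\eps_0,1-\eps_1]$ without ever entering a fixed invariant interval around $d$.)
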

\begin{proof}
	Note that $f(x)>x$ for all $x\in (0,d)$ and $f(x)<x$ for all $x\in (d,1)$. Let $c_1>0$ be the smallest and $c_2<1$ be the largest critical point of $f$. Then $f([0,c_2])\subset [0,c_2]$ and $f([c_1, 1])\subset [c_1,1]$, so $A=\underleftarrow{\lim}([0,c_2], f|_{[0,c_2]})$ and $B=\underleftarrow{\lim}([c_1,1], f|_{[c_1,1]})$ are well-defined continua. Also, $\underleftarrow{\lim}(I, f)=A\cup B$ and since $f([c_1,c_2])\subset [c_1, c_2]$ we have that $A\cap B=\underleftarrow{\lim}([c_1, c_2], f|_{[c_1,c_2]})$. Moreover, since $f([a,b])\subsetneq [a,b]$ for every $0<a<d<b<1$, we conclude that $\cap_{n\in\N}f^n([c_1, c_2])=\{(d,d,\ldots)\}$. So applying an extension of Proposition~\ref{prop:Ben} (or arguing analogously as in the previous proof) to both $A$ and $B$ we conclude that $\underleftarrow{\lim}(I, f)$ consists of two arcs 
	which are joined at their endpoint, thus it is an arc.
\end{proof}

\begin{remark}
	Note that the previous two Lemmas (and consequently the following theorem) can be extended to maps $f$ which can have infinitely many critical points, but for which there exist neighbourhoods of repelling fixed points ($0$ in the first lemma and $0,1$ in the second) which do not contain critical points. This is for instance not the case in the Henderson's map \cite{Henderson} which gives the pseudo-arc in the inverse limit.
\end{remark}

\begin{theorem}\footnote{In the late stages of writing this paper we found out through personal communication with Sina Greenwood that they obtained (with Sonja \v Stimac) a characterization of arc which works for all continuous interval maps $f$.}
	Let $f\colon I\to I$ be a piecewise monotone map. The space $\underleftarrow{\lim}(I,f)$ is an arc if and only if for every $x\in I$ there exists $y\in \Fix(f^2)$ such that $x$ is $f^2$-attracted to $y$, and such that all points in a connected component of $I\setminus \Fix(f^2)$ are $f^2$-attracted to the same point in $\Fix(f^2)$.
\end{theorem}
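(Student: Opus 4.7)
The plan is to reduce the theorem to a statement about $g=f^2$ via the map
$\Phi\colon\underleftarrow{\lim}(I,f)\to\underleftarrow{\lim}(I,g)$ defined by $\Phi(x_0,x_1,x_2,\ldots)=(x_0,x_2,x_4,\ldots)$. Its inverse, which sends $(y_0,y_1,y_2,\ldots)$ to $(y_0,f(y_1),y_1,f(y_2),y_2,\ldots)$, is well-defined because $f(f(y_{k+1}))=g(y_{k+1})=y_k$, so $\Phi$ is a continuous bijection between compact Hausdorff spaces and hence a homeomorphism. A direct computation also shows that under $\Phi$ the natural extension $\hat{f}^2$ corresponds to $\hat{g}$. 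Thus $\underleftarrow{\lim}(I,f)$ is an arc if and only if $\underleftarrow{\lim}(I,g)$ is, and since $g$ is again piecewise monotone, the theorem becomes a statement about $g$ and its attractors in $\Fix(g)$.

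For the backward direction, assume the attraction hypothesis for $g$. Piecewise monotonicity implies $I\setminus\Fix(g)$ has finitely many components $C_j=(a_j,b_j)$ with $a_j,b_j\in\Fix(g)$, and on each component $g-\mathrm{id}$ has constant nonzero sign. I would build $\underleftarrow{\lim}(I,g)$ inductively as a finite concatenation of arcs, each coming from a maximal $g$-invariant sub-interval on which one of Lemma~\ref{lem:arc1}, Lemma~\ref{lem:arc2}, or Proposition~\ref{prop:Ben} applies, and glued at the constant sequences $(p,p,\ldots)$ corresponding to the boundary fixed points $p\in\Fix(g)$. Since consecutive pieces meet in a single such point and are linearly ordered by the order on $I$, their union is again an arc, yielding $\underleftarrow{\lim}(I,g)$.

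For the forward direction, assume $\underleftarrow{\lim}(I,g)$ is an arc; then $\hat{g}$ is an orientation-preserving self-homeomorphism of this arc fixing its two non-cut points (which correspond to constant sequences in $\Fix(g)$). Lemma~\ref{lem:archomeo} then implies that every $\hat{g}$-orbit converges to a fixed point of $\hat{g}$, and such fixed points are exactly the constant sequences $(y,y,\ldots)$ with $y\in\Fix(g)$. For any $x\in I$ and any lift $\hat{x}$, continuity of $\pi_0$ gives $g^n(x)=\pi_0(\hat{g}^n(\hat{x}))\to\pi_0(\hat{y})\in\Fix(g)$, which establishes condition (a). For condition (b), take $x_1,x_2$ in a common component $(a,b)$ of $I\setminus\Fix(g)$, choose suitable lifts $\hat{x}_1,\hat{x}_2$, and argue that the subarc between them avoids fixed points of $\hat{g}$; the second assertion of Lemma~\ref{lem:archomeo} then forces $\hat{x}_1,\hat{x}_2$ to have the same $\hat{g}$-attractor, whose projection is the common $g$-attractor.

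The main obstacle is twofold. In the backward direction, the attraction hypothesis does not a priori force the common attractor $y_{C_j}$ of a component to lie in $\partial C_j$, since an orbit in $C_j$ can leave $\overline{C_j}$ (when $g$ is monotone at a boundary point in the ``wrong'' direction) and settle elsewhere; hence the correct invariant sub-intervals for the inductive gluing may not be the closures of individual components but larger unions of components, and identifying them requires a careful basin analysis. In the forward direction, the hard step is selecting lifts $\hat{x}_1,\hat{x}_2$ and controlling the projection $\pi_0$ of the subarc between them so as to exclude constant-sequence fixed points of $\hat{g}$ whose $0$-coordinate lies outside $(a,b)$, which is where the arc structure interacts most subtly with the fixed-point set of $g$.
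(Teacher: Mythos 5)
Your overall skeleton---pass to $g=f^2$ via the standard telescoping homeomorphism, apply Lemma~\ref{lem:archomeo} to the arc self-homeomorphism $\hat f^2$ for the forward direction, and glue inverse limits over invariant subintervals using Lemmas~\ref{lem:arc1} and~\ref{lem:arc2} for the backward direction---is the same as the paper's. However, both of the ``obstacles'' you flag at the end are genuine gaps in your argument as written, and the paper closes each one with a specific device that your proposal is missing. In the forward direction, the missing tool is Lemma~\ref{lem:surjint}: given $[x_0,y_0]$ with $[x_0,y_0]\cap\Fix(f^2)=\emptyset$ and $f^{2n}(x_0)\to a$, $f^{2n}(y_0)\to b$ with $a\neq b$, one inductively produces intervals $[x_i,y_i]$ with $f^2([x_i,y_i])=[x_{i-1},y_{i-1}]$ and $f^2|_{[x_i,y_i]}$ ``virtually increasing''. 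Then $\underleftarrow{\lim}([x_i,y_i],f^2|_{[x_i,y_i]})$ is a subarc whose endpoints are lifts $x,y$ of $x_0,y_0$ and whose $\pi_0$-image is exactly $[x_0,y_0]$. Since $\hat f^{2n}(x)\to(a,a,\ldots)$ and $\hat f^{2n}(y)\to(b,b,\ldots)$ are distinct fixed points of $\hat f^2$, Lemma~\ref{lem:archomeo} forces a fixed point $(z,z,\ldots)$ of $\hat f^2$ on the subarc joining $x$ to $y$, and then $z\in[x_0,y_0]\cap\Fix(f^2)$, a contradiction. Without a construction that controls $\pi_0$ of the connecting subarc in this way, your argument does not exclude constant sequences over fixed points lying outside $(a,b)$, which is exactly the step you admit you cannot do.

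In the backward direction, your first worry is unfounded: the common attractor of a component $(a_j,b_j)$ of $I\setminus\Fix(g)$ must be one of its endpoints. Indeed, if every point of $(a_j,b_j)$ were attracted to some $y>b_j$, choose $N$ so that $B_N:=\{t\in(a_j,b_j): g^k(t)>b_j \text{ for some } k\le N\}\neq\emptyset$; this set is open, $t^*:=\inf B_N>a_j$ because $g^k(t)\to a_j$ as $t\to a_j$ for each fixed $k\le N$, and continuity gives $g^k(t^*)=b_j$ for some $k\le N$, so $t^*\in(a_j,b_j)$ is attracted to $b_j\neq y$, a contradiction. What remains is the genuine complication that orbits may overshoot into the adjacent component; the paper resolves it by showing that the invariant interval one must use is at most the union $[a_i,d_i]$ of two adjacent components sharing the attracting fixed point $b_i$ (with $f^2(x)\ge a_i$ on $(a_i,b_i)$ and $f^2(y)\le d_i$ on $(b_i,d_i)$, again by the infimum argument), and this two-component case is precisely what Lemma~\ref{lem:arc2} handles, while a single invariant component is handled by Lemma~\ref{lem:arc1}. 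So your strategy is the right one, but both named obstacles must actually be resolved---as above---before the proof is complete.
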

\begin{proof} ($\Rightarrow$)
	Assume $\underleftarrow{\lim}(I, f)$ is an arc. Then $\hat f^2\colon\underleftarrow{\lim}(I, f)\to \underleftarrow{\lim}(I, f)$ is an arc self-homeomorphism which preserves the endpoints of $\underleftarrow{\lim}(I, f)=\underleftarrow{\lim}(I, f^2)$. Note that $\hat{f}^2$ is necessarily an orientation preserving homeomorphism, so $\hat{f}^2(0)=0$ and $\hat{f}^2(1)=1$. From Lemma~\ref{lem:archomeo} we conclude that every $x=(x_0, x_1, \ldots)\in\underleftarrow{\lim}(I, f^2)$ is attracted to a fixed point of $\hat f^2$, \ie there is a point $\gamma\in \Fix(f^2)$ such that $(f^{2n}(x_0), f^{2n}(x_1), \ldots)\to (\gamma,\gamma, \ldots)$ as $n\to\infty$ for $(f^{2n}(x_0), f^{2n}(x_1),\ldots), (\gamma,\gamma, \ldots)\in \underleftarrow{\lim}(I, f^2)$. It follows that $f^{2n}(x_0)\to \gamma\in \Fix(f^2)$ as $n\to \infty$ for an arbitrary $x_0\in I$.\\
	Now take $x_0,y_0\in I$ such that $[x_0,y_0]\cap \Fix(f^2)=\emptyset$, and assume for contradiction that there exist $a\neq b\in \Fix(f^2)$ such that $f^{2n}(x_0)\to a$ and $f^{2n}(y_0)\to b$ as $n\to\infty$. By Lemma~\ref{lem:surjint}, for every $i\in\N$ we can inductively find  $x_i, y_i\in I$ such that $f^2([x_i,y_i])=[x_{i-1},y_{i-1}]$ and $f^2|_{[x_i,y_i]}$ is virtually increasing. Then, obviously $\underleftarrow{\lim}([x_i,y_i], f^2|_{[x_i, y_i]})$ is a subcontinuum of $\underleftarrow{\lim}(I, f)$ which is an arc, and thus $\underleftarrow{\lim}([x_i,y_i], f^2|_{[x_i, y_i]})$ is an arc with endpoints $x=(x_0,x_1, \ldots)$, and $y=(y_0, y_1,\ldots)$. Note that $\hat f^{2n}(x)\to (a,a,\ldots), \hat f^{2n}(y)\to (b,b,\ldots)$ as $n\to \infty$. Thus Lemma~\ref{lem:archomeo} implies that there is a point $(z,z,\ldots)\in \underleftarrow{\lim}([x_i,y_i], f^2|_{[x_i, y_i]})$. Specifically, there exists a point $z\in \Fix(f^2)$ such that $z\in [x_0,y_0]$, which is a contradiction.
	
	($\Leftarrow$) To prove the other direction, assume that the critical set of $f$ is finite, and assume that every point is $f^2$-attracted to a point in $\Fix(f^2)$ such that every point in a connected component of $I\setminus \Fix(f^2)$ is attracted to the same point. Note that $I\setminus \Fix(f^2)$ is a union of intervals $U_i=(a_i,b_i)$, $0\leq i\leq N$, such that either $f^{2n}(x)\to a_i$ or $f^{2n}(x)\to b_i$ for every $x\in (a_i, b_i)$. Assume without loss of generality that $f^{2n}(x)\to b_i$ for every $x\in (a_i, b_i)$. Then $f^2(x)\geq a_i$, for otherwise there would exist a point in $(a_i, b_i)$ which is $f^2$-attracted to some fixed point smaller or equal to $a_i$. If there exists no point $x\in (a_i,b_i)$ so that $f^2(x)>b_i$, then $f^2|_{(a_i,b_i)}$ is a homeomorphism and we are done. So, assume that there is $x\in (a_i, b_i)$ such that $f^2(x)>b_i$. Then (unless $b_i$ is the largest fixed point of $f^2$) there is a point $d_i\in \Fix(f^2)$ such that $(b_i, d_i)\cap \Fix(f^2)=\emptyset$ and $f^{2n}(y)\to b_i$ for all $y\in (b_i, d_i)$. Also, then $f^2(y)\leq d_i$ for all $y\in (b_i, d_i)$ (in the special case when $b_i$ is the largest fixed point of $f^2$ we take $d_i=1$). So we have two possibilities: either $f^2([a_i, b_i])=[a_i, b_i]$, or $f^2([a_i, d_i])=[a_i, d_i]$. Lemma~\ref{lem:arc1} implies that in the first case $\underleftarrow{\lim}([a_i, b_i], f^2|_{[a_i, b_i]})$ is an arc, and Lemma~\ref{lem:arc2} implies that in the second case $\underleftarrow{\lim}([a_i, d_i], f^2|_{[a_i, d_i]})$ is an arc (special case when $d_i=1$ is treated similarly as $f|_{[0,c_2]}$ in the proof of Lemma~\ref{lem:arc2} to conclude that the inverse limit is an arc). Therefore, $\underleftarrow{\lim}(I, f^2)$ can be obtained as a union of finitely many  arcs which pairwise intersect in their endpoints. We conclude that $\underleftarrow{\lim}(I, f^2)$ is an arc.
\end{proof}

\begin{remark}
	Note that the first direction of the previous proof indeed works for general $f$, not only for piecewise monotone. However, in the other direction the assumption of finiteness of the critical set is crucial. For example, Lemma~\ref{lem:arc1} fails in the case of the Henderson's map from \cite{Henderson} since it gives pseudo-arc as the inverse limit.
\end{remark}


\end{document}